\documentclass[a4paper,12pt]{amsart}
\input{macros_2}
\usepackage[margin=.99in]{geometry}
\setlist{ 
	listparindent=\parindent,
	parsep=0pt,
}

\numberwithin{equation}{section}

    \author[Jefferson Baudin]{Jefferson Baudin}
  \address{\'Ecole Polytechnique F\'ed\'erale de Lausanne, Chair of Algebraic Geometry \newline 
    \indent MA C3 575 (Bâtiment MA), Station 8, CH-1015 Lausanne}
  \email{jefferson.baudin@epfl.ch}

  \author[Sofia Tirabassi]{Sofia Tirabassi}
  \address{Department of Mathematics, Stockholm University\newline
  \indent SE-106 91 Stockholm, Sweden}
  \email{tirabassi@math.su.se}

  \subjclass[2020]{14E05, 14R05, 14K99} 
  \keywords{semi-abelian varieties, quasi-projective varieties, generic vanishing, birational classification}

\title[Effective characterization of semi-abelian varieties]{Effective characterization of semi-abelian varieties}
\begin{document}
	
\begin{abstract}
    We obtain effective characterizations of complex semi-abelian varieties in arbitrary dimension in terms of logarithmic irregularity and the first two logarithmic plurigenera, among varieties of maximal Albanese dimension, or among varieties whose compactification has large irregularity (these hypotheses are sharp). To the authors' knowledge, this is the first result in this direction for quasi-projective varieties in arbitrary dimension.
    
    %Our first result is a characterization among We have two main results: the first one is a characterization of semi-abelian varieties among quasi-projective varieties of maximal Albanese dimension. The second one is a characterization of semi-abelian varieties among quasi-projective varieties with large logarithmic irregularity and whose compactification has large irregularity. Both results are sharp.
\end{abstract}	

    \maketitle
	\tableofcontents
\section{Introduction}

A celebrated theorem of Chen and Hacon \cite{Chen_Hacon_Characterization_of_abelian_varieties} asserts that a smooth projective complex variety $X$ is birational to an abelian variety if and only if its irregularity satisfies $q(X) = \dim(X)$ and its first two plurigenera satisfy $P_1(X) = P_2(X) = 1$ (see also \cite{Pareschi_Basic_results_on_irr_vars_via_FM_methods, Jiang_An_effective_version_of_a_thm_of_Kawamata_on_the_Albanese_map} for other proofs and generalizations, and  \cite{Baudin_Effective_Characterization_of_ordinary_abelian_varieties, Baudin_Martin_Enriques_characterization_of_abelian_surfaces_in_positive_characteristic} in positive characteristic). The objective of this paper is to extend these results to the quasi-projective setting.

%This paper extends to the quasi-projective setting well known and celebrated characterization theorems for abelian varieties. It is the first such characterization for varieties of dimension greater than two, and generalizes to higher dimension the results in \cite{MLPT2023}.

Given a smooth complex quasi-projective variety $V$ and a smooth compactification $X$ with simple normal crossing (snc) boundary divisor $D$, we define the following logarithmic cohomological invariants of $V$ (which do not depend on the log smooth compactification):
\begin{itemize}
    \item the logarithmic (log) irregularity of $V$ is $\overline{q}(V) \coloneqq h^0(X,\Omega_X^1(\log D))$;
    \item given $m \geq 1$, the $m$-th logarithmic plurigenus of $V$ is $\overline{P}_m(V)\coloneqq h^0(X, \omega_X(D)^{\otimes m
    })$;
    \item the log-Kodaira dimension is $\overline{\kappa}(V) = \kappa(X, \omega_X(D))$.
\end{itemize}
Recall that a semi-abelian variety is a connected algebraic group which is the extension of an abelian variety by an affine torus (equivalently, a connected algebraic group that does not contain any copy of $\bG_a$). In analogy with the projective setting, Fujino showed in \cite{Fujino_On_quasi_Albanese_maps} that integrating logarithmic $1$-forms gives rise to a universal morphism $\alb_V \colon V \to \Alb(V)$ to a semi-abelian variety, which we again call the \emph{Albanese morphism}. We say that $V$ has \emph{maximal Albanese dimension} if $\dim(\alb_V(V)) = \dim(V)$. Our objective is to study the following conjecture:

\begin{conjecture*}[{\cite{MLPT2023}}]\label{general_conjecture}
    Let $d \geq 1$ be an integer. Then there exists an integer $k = k(d) \geq 1$ such that for any smooth complex variety $V$ of dimension $d$, if $\overline{q}(V) = \dim(V)$ and $\overline{P}_1(V) = \dots = \overline{P}_k(V) = 1$, then $\alb_V \colon V \to \Alb(V)$ is an isomorphism away from a closed subset of $\Alb(V)$ of codimension at least $2$.
\end{conjecture*}

In Iitaka's language \cite{Iitaka1975}, the above conclusion can be rephrased by saying that $\alb_V$ is a WWPB equivalence\footnote{As opposed to the projective situation, logarithmic invariants are not birational invariants since we can remove divisors. However, thanks to the work of Iitaka, they are invariant under WWPB equivalence. For this reason, it is commonly accepted that WWPB equivalence is the correct equivalence relation to consider when investigating the birational geometry of quasi-projective varieties.}. The first result in this direction was a theorem of Kawamata \cite{Kawamata_Characterization_of_abelian_varieties} using semipositivity methods, obtaining birationality of $\alb_V$ under the additional assumption that $\overline{\kappa}(V) = 0$ (i.e. fixing all plurigenera for $m$ sufficiently large and divisible). The birationality statement was later improved to the actual conclusion of \autoref{general_conjecture} in \cite{Fujino2024Erratum}.

Let us now move to effective results. First, note that the main result of \cite{Chen_Hacon_Characterization_of_abelian_varieties} states that \autoref{general_conjecture} holds with $k = 2$ in any dimension for projective varieties. However, it was shown in \cite{MLPT2023} that there exists a smooth surface $V$ with $\overline{q}(V) = 2$, $\overline{P}_1(V) = \overline{P}_2(V) = 1$, such that $\alb_V$ is not dominant. Hence, we truly need to fix higher logarithmic plurigenera in \autoref{general_conjecture}. The authors in \cite{MLPT2023} settled this conjecture in dimension $d = 2$, showing that the bound $k = 3$ holds.

In this article, we started a systematic study of this conjecture in arbitrary dimension. In order to clarify the picture, we wanted to understand under what assumptions the usual bounds $\overline{P}_1(V) = \overline{P}_2(V) = 1$ are enough. Our first result is the following:

\begin{theorem}\label{main_thm_max_alb_dim}
	Let $V$ be a smooth complex variety of maximal Albanese dimension such that $\overline{P}_2(V) = 1$. Then $a_V \colon V \to \Alb(V)$ is an isomorphism away from a closed subset of $\Alb(V)$ of codimension at least $2$.
\end{theorem}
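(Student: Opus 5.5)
We would follow the Chen--Hacon strategy, run with logarithmic generic vanishing on the semi-abelian variety $A \coloneqq \Alb(V)$. Fix a log smooth compactification $(X,D)$ of $V$ and a compactification $\overline{A}$ of $A$ (a $\bP^1$-bundle-type compactification of the torus part over the abelian quotient $B$) such that $a_V$ extends to a morphism $a\colon X\to \overline{A}$. Let $E$ be the boundary of $\overline{A}$; we may assume $a^{-1}(E)\subseteq D$ after blowing up.

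\textbf{Step 1: reduce to $\overline{\kappa}(V)=0$.} Since $V$ has maximal Albanese dimension, the logarithmic generic vanishing results for $a_*\omega_X(D)$ apply: the sheaves $R^ia_*\omega_X(D)$ satisfy GV with respect to twists by torsion characters of $\pi_1(A)$ (rank-one local systems $P_\alpha$). The cohomology support loci $V^0(\omega_X(D))\subseteq \Char(V)$ are finite unions of torsion translates of subtori.

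Suppose $\overline{P}_2(V)=1$. Then $\overline{P}_1(V)\le 1$, because multiplying by a fixed nonzero section of $\omega_X(D)$ gives $h^0(\omega_X(D))\le h^0(\omega_X(D)^{2})$. We then use the standard trick: if $0\in V^0$ and some $\alpha\neq 0$ lies in $V^0$ with $-\alpha\in V^0$, then multiplying sections gives $h^0(\omega_X(D)^{2})\ge h^0(\omega_X(D)\otimes P_\alpha)+\dots$. More precisely, if a positive-dimensional component $T$ passes through a point $\alpha$, then the multiplication maps $H^0(\omega\otimes P_{\alpha+\beta})\otimes H^0(\omega\otimes P_{\alpha-\beta})\to H^0(\omega^2\otimes P_{2\alpha})$ produce a positive-dimensional family of divisors in $|\omega^2\otimes P_{2\alpha}|$. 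Taking $\alpha$ torsion, so that $2\alpha$ is a suitable point, forces $\overline{P}_2\ge 2$. Hence every component of $V^0$ is an isolated point. Combining Fourier--Mukai and Kollár-type decomposition arguments on $B$ with the torus direction, this isolatedness shows that $a_*\omega_X(D)$ is essentially a torsion-translated unipotent-free sum. Consequently $\overline{P}_1=1$, and $|2K_X+2D|$ consists of a single divisor coming from $P_\alpha\otimes P_{-\alpha}$ products. This gives $\overline{\kappa}(V)=0$, in the Chen--Hacon manner: Iitaka-fibration compatibility implies that the image of $V$ has log Kodaira dimension zero.

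\textbf{Step 2: apply Kawamata--Fujino.} With $\overline{\kappa}(V)=0$ and maximal Albanese dimension, Kawamata's theorem implies that $a_V$ is dominant and birational. Fujino's erratum \cite{Fujino2024Erratum} then upgrades this to an isomorphism away from a closed subset of codimension at least $2$ in $\Alb(V)$. Dominance also follows independently: a proper image would be a subvariety of a semi-abelian variety of maximal dimension with positive log Kodaira dimension, by Iitaka/Kawamata, unless it is a translate of a subgroup. The latter is excluded by the universality of $\alb$.

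\textbf{Main obstacle.} The hard step is Step 1, namely ruling out positive-dimensional components of $V^0(\omega_X(D))$ using only $\overline{P}_2$. Log generic vanishing on semi-abelian varieties has the known defect that characters of the torus part need not be unitary. It also allows extra contributions from $R^ia_*$ along the boundary $E$. Our first attempt would be to push forward to the abelian quotient $B$ via $X\to\overline{A}\to B$ and apply Chen--Hacon/Pareschi–Popa GV there. We would then handle the torus fibres directly: on a fibre, $\omega(D)$ restricted to a log Calabi--Yau torus compactification is trivial, so sections of $\omega_X(D)$ correspond to invariant forms.
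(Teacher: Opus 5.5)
\textbf{There is a genuine gap: your Step~1 is not a proof.} Step~2 is fine conditionally. If $\overline{\kappa}(V)=0$, Kawamata's theorem makes $\alb_V$ dominant with irreducible general fibre, hence birational under maximal Albanese dimension, and \cite{Fujino2024Erratum} gives the codimension-$2$ statement. But all the content sits in Step~1.

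Since $\overline{\kappa}(V)=0$ follows a posteriori from the theorem (by WWPB-invariance), Step~1 is essentially equivalent to what you want to prove. It also cannot come formally out of $\overline{P}_1=\overline{P}_2=1$: the surface of \cite{MLPT2023} has these invariants and a non-dominant Albanese map, hence $\overline{\kappa}>0$ by Kawamata. So maximal Albanese dimension must enter the derivation of $\overline{\kappa}(V)=0$ essentially, and you never say where. Concretely:
(i) There is no generic vanishing theory for $a_*\omega_X(D)$ with respect to characters of $\pi_1(\Alb(V))$ of the kind you invoke. $\Pic^0$ of the torus part is trivial. Non-unitary characters are out of reach of Hodge-theoretic methods. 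Torsion characters amount to twisting by roots of boundary divisors, with no Fourier--Mukai transform to extract structure from the loci. You flag this yourself as the main obstacle but do not resolve it.
(ii) The $\pm$ trick only excludes a positive-dimensional $Z$ with both $Z$ and $-Z$ in $V^0$ (as in \autoref{lem:GVmorphism}.\autoref{lem:noZ}). Products of sections of $\omega_X(D)\otimes P_{\alpha+\beta}$ and $\omega_X(D)\otimes P_{\alpha-\beta}$ land in $H^0(\omega_X(D)^{\otimes 2}\otimes P_{2\alpha})$. This says nothing about $\overline{P}_2$ unless $2\alpha=0$, so ``every component of $V^0$ is isolated'' does not follow.
(iii) Even granting isolatedness, ``hence $\overline{\kappa}(V)=0$ by Iitaka-fibration compatibility'' is unargued. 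In the projective case this passage is precisely the nontrivial part of Chen--Hacon/Pareschi (cf.\ \autoref{Pareschi_result}), and it rests on Fourier--Mukai on an abelian variety.
(iv) You never establish $\overline{P}_1(V)\neq 0$ (i.e.\ $0\in V^0$) independently. You deduce it from an argument that presupposes it.

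\textbf{How the paper proceeds instead.} It never passes through $\overline{\kappa}(V)=0$ and uses generic vanishing only on the compact part $\Alb(X)$.
Dominance and $\overline{P}_1=1$ come from generic global generation of $\Omega^1_X(\log D)$ together with $\overline{P}_2=1$ (\autoref{dominance_in_mad_case}).
The torus directions are handled by induction on $r=\overline{q}(V)-q(X)$, quotienting by one $\bG_m$ factor at a time; the base case is Chen--Hacon for log pairs. The map $X\to T$ has connected fibres by induction (\autoref{alb_of_X_fibration}).
The logarithmic Koll\'ar decomposition (\autoref{hodge_module_lemma}), Fujino's relative vanishing (as $\Delta_T$ is ample over $\Alb(X)$) and GV on $\Alb(X)$ show that no log-ramification component outside $H$ dominates $T$ (\autoref{no_bad_component_dominates}). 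Hence $g$ is log \'etale on general $\bP^1$-fibres.
The image of the remaining ramification is the preimage of a principal polarization $\Delta_B$ on an abelian quotient $B$ (\autoref{principal_polarization}). The isomorphism $H_1(B\setminus\Delta_B,\bZ)\cong H_1(B,\bZ)$ (\autoref{general_result_first_homology_groups}) lets the cyclic Galois cover $\overline{X}_U\to Z_U$ extend to an \'etale cover of $\Alb(V)$, which forces birationality by the universal property.
The codimension-$2$ statement then comes from showing that components of $D-H$ are $g$-exceptional.
Your closing idea of pushing to the abelian quotient and treating torus fibres directly points this way, but that is exactly where these ingredients are needed.
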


Thanks to this result, we see that it is enough to show that $\alb_V$ is dominant in \autoref{general_conjecture}. It is therefore interesting to see that the hardest part of \autoref{general_conjecture} seems to show dominance, while in the projective case this was an immediate consequence of the generic vanishing theory developed by Green--Lazarsfeld \cite{Green_Lazarsfeld_Deformation_theory_generic_vanishing_theorems_and_some_conjectures_of_Enriques_Catanese_Beauville, Green_Lazarsfeld_Higher_obstruction_to_deforming_cohomology_groups_of_line_bundles}, and was explicitly proven in \cite{Ein_Lazarsfeld_Singularities_of_theta_divisors_and_the_birational_geometry_of_irregular_varieties} (see also \autoref{lem:GVmorphism}).

As a corollary of our result, we obtain the following characterization of affine tori:

\begin{theorem}\label{main_thm_intro_affine_case}
    A smooth affine complex variety of maximal Albanese dimension $V$ is isomorphic to $\mathbb{G}_m^{\dim(V)}$ if and only if $\overline{P}_2(V) = 1$.
\end{theorem}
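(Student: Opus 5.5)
The \emph{only if} direction is immediate. For $V = \mathbb{G}_m^{d}$ we use the compactification $X = (\mathbb{P}^1)^d$ with boundary $D$ the union of the coordinate hyperplanes at $0$ and $\infty$. Then $\omega_X(D) \cong \mathcal{O}_X$, so $\overline{P}_2(V) = 1$ (and $\overline{q}(V) = d$, and $V$ is its own Albanese).

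For the converse, let $V$ be smooth affine of maximal Albanese dimension with $\overline{P}_2(V)=1$. By \autoref{main_thm_max_alb_dim}, $\alb_V \colon V \to \Alb(V)$ is an isomorphism away from a closed subset $Z \subset \Alb(V)$ of codimension at least $2$. More precisely, there is an open $U \subset \Alb(V)$ with complement of codimension $\geq 2$ such that $\alb_V^{-1}(U) \to U$ is an isomorphism. I would argue in three steps.

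First, $\alb_V$ is dominant, hence birational, onto the semi-abelian variety $G \coloneqq \Alb(V)$. Its exceptional behaviour lives over $Z$. Since $V$ is affine, $\mathcal{O}(V)$ is large. Via the birational map, regular functions on $V$ give rational functions on $G$. Such a function $f$ is regular on $U$, because on $U$ it is a function on $\alb_V^{-1}(U)\cong U$. By normality of $G$ and $\operatorname{codim} Z \geq 2$ (Hartogs), $f$ extends to a regular function on $G$. Hence $\mathcal{O}(V) \hookrightarrow \mathcal{O}(G)$. Conversely, pulling back gives $\mathcal{O}(G) \to \mathcal{O}(V)$, and the two maps are mutually inverse, since they agree on the dense open $U$. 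So $\mathcal{O}(V) \cong \mathcal{O}(G)$.

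Second, write $1 \to T \to G \to A \to 0$ with $T$ a torus and $A$ an abelian variety. Then $\mathcal{O}(G) = \mathcal{O}(A)$-valued sections of the $T$-torsor decompose along the character lattice. Each isotypic piece is $H^0(A, L_\chi)$ for the line bundle $L_\chi \in \operatorname{Pic}^0(A)$ attached to the character $\chi$. This piece is $\mathbb{C}$ when $L_\chi$ is trivial and $0$ otherwise. Since $V$ is affine of dimension $d = \dim G$, we have $\dim \operatorname{Spec}\mathcal{O}(V) = d$. The transcendence degree of $\mathcal{O}(G)$ is at most $\dim T$, because $\mathcal{O}(G)$ is a subalgebra of the group algebra of the character lattice of $T$. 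Hence $\dim T \geq d = \dim G$, so $A = 0$ and $G \cong \mathbb{G}_m^d$.

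Third, $V$ is affine and $G$ is affine, and $\alb_V$ induces an isomorphism on coordinate rings by the first step. A morphism of affine varieties inducing an isomorphism on global functions is an isomorphism. Therefore $V \cong G \cong \mathbb{G}_m^{d}$.

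The main obstacle is the first step: making sure $\alb_V$ is genuinely birational and that the Hartogs extension is legitimate. The statement of \autoref{main_thm_max_alb_dim}, giving an isomorphism over the complement of codimension $\geq 2$, provides exactly this. The affineness of $V$ is what rules out the extra components in the fibres over $Z$, since the ring isomorphism forces $V \to G$ to be an isomorphism outright.
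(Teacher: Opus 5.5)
Your proof is correct. Its first and third steps, using Hartogs on the smooth variety $\Alb(V)$ to see that $\alb_V^*$ identifies $\mathcal{O}(\Alb(V))$ with $\mathcal{O}(V)$ and then using that both sides are affine, are exactly the paper's closing $S_2$ argument. The difference lies in how you show that the compact part $\Alb(X)$ vanishes.

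\textbf{The paper's route.} It argues on the compactification and uses the finer output of the proof of \autoref{WWPB_mad}: $g\colon X\to Z$ is birational and $D\le g^*\Delta+E$ with $E$ $g$-exceptional. This gives $\dim X=\kappa(X,D)\le\kappa(Z,\Delta)$, where affineness of $V$ provides the equality. It then bounds $\kappa(Z,\Delta)$ by the torus rank $\overline{q}(V)-q(X)$.

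\textbf{Your route.} You use only the statement of \autoref{main_thm_max_alb_dim} as a black box. You first transport the regular functions to $\Alb(V)$, and then bound $\operatorname{trdeg}\mathcal{O}(\Alb(V))$ by the torus rank via the character decomposition along $\Alb(V)\to\Alb(X)$.

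\textbf{Comparison.} Since $\mathcal{O}(V)=\bigcup_m H^0(X,\mathcal{O}_X(mD))$, and similarly for $(Z,\Delta)$, the two arguments measure the same invariant in two ways. Your version has the advantage of being independent of the internals of the proof of \autoref{WWPB_mad}. Your lattice description also makes explicit that $\kappa(Z,\Delta)$ equals the rank of the group of characters $\chi$ with $L_\chi$ trivial. That rank can be strictly smaller than the torus rank: a non-torsion extension $G$ of an elliptic curve by $\mathbb{G}_m$ has $\mathcal{O}(G)=\mathbb{C}$. So in general only the inequality $\kappa(Z,\Delta)\le\overline{q}(V)-q(X)$ holds, and that inequality is all either argument needs.

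You also supply the easy converse via $(\mathbb{P}^1)^d$ with the coordinate hyperplanes at $0$ and $\infty$, which the paper leaves implicit.
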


In \cite{MLPT2023}, the authors also showed that if $V$ is a smooth surface with $\overline{q}(V) = 2$, $\overline{P}_1(V) = \overline{P}_2(V) = 1$ and $q(X) = 1$ (for a compactification $X$ of $V$), then $\alb_V \colon V \to \Alb(V)$ also satisfies the conclusion of \autoref{general_conjecture} (their surface $V$ violating the bound $k = 2$ satisfies $\Alb(V) = \bG_m^2$, i.e. $q(X) = 0$). We show a similar result in higher dimension:

\begin{theorem}\label{main_thm}
    Let $V$ a smooth complex variety with $\overline{q}(V) = \dim V$ and $\overline{P}_1(V) = \overline{P}_2(V) = 1$. Assume furthermore that $q(X) \geq \dim(V) - 1$ for a smooth compactification $X$ of $V$. Then $\alb_V \colon V \to \Alb(V)$ is an isomorphism away from a closed subset of $\Alb(V)$ of codimension at least $2$.
\end{theorem}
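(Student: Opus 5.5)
By \autoref{main_thm_max_alb_dim}, it suffices to show that $\alb_V$ is dominant. Since $\overline{q}(V)=\dim V$, this means $\alb_V$ is generically finite onto $\Alb(V)$, so $V$ has maximal Albanese dimension. I will set $d=\dim V$, $q=q(X)$ and $A=\Alb(X)$. The quasi-Albanese sits in an extension $1\to T\to \Alb(V)\to A\to 0$ with $T\cong \bG_m^{d-q}$. The hypothesis $q\geq d-1$ leaves two cases: $q=d$ with $T$ trivial, or $q=d-1$ with $T\cong\bG_m$.

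\emph{Case $q=d$.} Here $\Alb(V)=A$ is an abelian variety. Since $\overline{P}_m(V)\geq P_m(X)$, and $P_1(X)\ge 1$ would otherwise be needed, I first check $P_1(X)=P_2(X)=1$. The upper bounds $P_m(X)\le 1$ follow from $\overline{P}_m(V)=1$ together with $\omega_X^{m}\subseteq\omega_X(D)^{m}$. The lower bound $P_1(X)\geq 1$ needs an argument. If $P_1(X)=0$, I would use generic vanishing for $\alb_{X*}\omega_X$ (\autoref{lem:GVmorphism}) as in Chen--Hacon to get a positive-dimensional component of $V^0$, and then derive a contradiction with $\overline{P}_2=1$ by multiplying sections twisted by $\alpha$ and $\alpha^{-1}$, as usual. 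Alternatively, I would run the Chen--Hacon argument directly with $\omega_X(D)$ in place of $\omega_X$. This works because $\alb_{X*}\omega_X(D)$ is still GV, via log generic vanishing (Kollár's torsion freeness in the log setting). Either way, Chen--Hacon's argument gives that $\alb_X$ is surjective, hence $\alb_V$ is dominant.

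\emph{Case $q=d-1$.} Let $\pi\colon \Alb(V)\to A$ be the $\bG_m$-bundle, and consider the composite $f=\pi\circ\alb_V\colon V\to A$, which extends to $\alb_X\colon X\to A$. The main step is to show that $\alb_X$ is surjective, i.e. that its image $Z\subseteq A$ equals $A$. Suppose instead that $\alb_V$ is not dominant. Then its image is a $d$-dimensional subvariety $W$ of the $d$-dimensional $\Alb(V)$, which is impossible since $W$ would be closed of full dimension. So non-dominance means $\dim \alb_V(V)<d$, contradicting maximal Albanese dimension only if we already know it.

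I therefore have to argue dominance directly, and here is how I would do it. If $Z\neq A$, then by Ueno $Z$ is fibered by translates of an abelian subvariety over a variety of general type, or $Z$ has dimension less than $d-1$. If $Z$ is of general type (after Ueno reduction), pulling back forms gives $\kappa(X)\geq 1$, hence $P_2(X)\geq 2$ and so $\overline{P}_2\ge 2$, a contradiction. Thus $\dim Z$ drops or Ueno fibrations appear. I would handle these by restricting to the fibres and using induction on dimension, together with $\overline{P}_1=1$ to get a nonvanishing section of $\omega_X(D)$. With $\alb_X$ surjective onto $A$ and $\dim A=d-1$, the general fibre $F$ of $X\to A$ is a curve. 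I would then analyse $F\cap V$ and show that it is $\bP^1$ minus two points. The reason is that the logarithmic form coming from $T$ restricts nontrivially, so $\overline{q}(F\cap V)\geq 1$, and this forces $F\cap V$ to be $\bG_m$ or of log general type. In the log general type case, a relative positivity argument for $f_*\omega_{X/A}(D)^{2}$ (Viehweg/Fujino weak positivity combined with $\overline{P}_2=1$) gives a contradiction. Consequently $F\cap V\to \bG_m$ is dominant, so $\alb_V$ is dominant.

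I expect the main obstacle to be this last step: excluding non-dominance when the fibres are curves of log general type, or when $D$ has horizontal components that do not map onto the $\bG_m$-direction. There I would try to combine weak positivity with the fact that sections of $\omega_X(D)^2$ restrict nontrivially to $F$.
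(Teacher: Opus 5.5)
Your reduction to dominance of $\alb_V$ via \autoref{main_thm_max_alb_dim} is the same as the paper's. In the case $q(X)=\dim V$, your second route (generic vanishing for $a_*\omega_X(D)$) is exactly \autoref{lem:GVmorphism}.\autoref{lem:GVmorphismitem}. That statement also gives surjectivity of $a_X$ for every value of $q(X)$, so the Ueno discussion is unnecessary. As written it is also incorrect: a proper image $a_X(X)\subsetneq\Alb(X)$ has dimension at most $\dim V-2$, and pulling back forms from it produces no sections of $\omega_X$.

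\textbf{The gap is in the case $q(X)=\dim V-1$.} Once $a_X$ is surjective, your key claim is \emph{equivalent} to dominance of $\alb_V$. That claim says the logarithmic form coming from the $\bG_m$-direction restricts non-trivially to $F\cap V$. To see the equivalence, suppose $\alb_V$ is not dominant. Then the closure of its image is a $(\dim V-1)$-dimensional subvariety of $\Alb(V)$ that is generically finite over $\Alb(X)$. So each component of a general fibre $F\cap V$ is sent to a single point of the corresponding $\bG_m$-fibre, and the form pulls back to zero. The argument therefore assumes what it is supposed to prove. Nothing else in the sketch compensates for this:
\begin{itemize}
\item non-constancy of $F\cap V\to\bG_m$ already \emph{is} dominance, whereas knowing $F\cap V\cong\bG_m$ would not imply it;
\item the dichotomy ``$\bG_m$ or log general type'' omits elliptic fibres;
\item weak positivity of $f_*\omega_{X/A}(D)^{\otimes 2}$ cannot by itself contradict $\overline{P}_2(V)=1$, since a weakly positive sheaf on an abelian variety may have no non-zero sections (e.g.\ a non-torsion element of $\mathrm{Pic}^0$).
\end{itemize}

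\textbf{What is missing} is a mechanism that uses $\overline{P}_1(V)=\overline{P}_2(V)=1$ to stop $X$ from landing in a proper subvariety of the standard compactification $Z$ of $\Alb(V)$. Here $Z_1=Z$ and $f_1=g$, since $\overline{q}(V)-q(X)=1$. In the paper this is done in two steps.
\begin{itemize}
\item \autoref{lem:image_of_X_avoids_Delta} treats the case where $g(X)\subsetneq Z$ meets $\Delta$. Put $M=\Delta|_Y$ on the normalization $Y$ of $g(X)$ and $B=D-\varepsilon g^*M$. Then \autoref{prop:general_kollar_vanishing} gives $H^{q(X)}(Y,g_*\omega_X(D))=0$. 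Relative vanishing over $\Alb(X)$ (as $\Delta$ is relatively ample) turns this into $H^{q(X)}(\Alb(X),a_*\omega_X(D))=0$. This contradicts the unipotent summand of \autoref{lem:GVmorphism}.\autoref{lem:GVmorphismitem}.
\item If instead $g(X)$ misses $\Delta$, it is a complete subvariety of $\Alb(V)$ through which $\alb_V$ factors. \autoref{cor:best_dominance_we_can_show_so_far} then concludes by a dimension count. The injection $H^1(\Alb(V),\mathbb{C})\hookrightarrow H^1(V,\mathbb{C})$ has source of dimension $2q(X)+1$. It would factor through $H^1$ of a resolution of $g(X)$, which has dimension at most $2q(X)$.
\end{itemize}
The paper gets this last bound by first proving, in \autoref{thm:fiberspace}, that $a_X$ is an algebraic fibre space, so that the resolution is birational to $\Alb(X)$. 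The inequality $q\le q(X)$ for smooth projective varieties dominated by $X$ would serve equally well. Neither these ingredients nor any substitute for them appears in your proposal.
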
 

As already mentioned, this assumption on the irregularity of a compactification is sharp. \\

Our paper is organized as follows:
in \autoref{sec:BG} we recall the necessary background material about semi-abelian varieties and generic vanishing, and we prove a useful theorem about higher direct images of logarithmic canonical bundles. We also prove a few preliminary results.

The proofs of \autoref{main_thm_max_alb_dim} and \autoref{main_thm_intro_affine_case} are the content of \autoref{sec:maxAlb}. Rather quick arguments show that our assumptions warrant that $\overline{q}(V)=\dim V$, and that the Albanese morphism of a compactification of $V$ is surjective with connected fibers. The main part of this section is in fact devoted to showing that $\alb_V$ is birational, and the idea is essentially to show that $\alb_V$ admits a compactification which is étale over $\Alb(V)$ (up to exceptional components). Since a finite étale cover of a semi-abelian variety is again a semi-abelian variety, this concludes the statement by the universal property of the Albanese morphism. Showing this étale property follows from a meticulous study of the (log) ramification divisor. Finally, the conclusion that $\alb_V$ is an isomorphism away from a closed subset of $\Alb(V)$ of codimension at least $2$ follows via a brief argument using our previous considerations.

Finally, we prove \autoref{main_thm} in \autoref{sec:largeq}. We first obtain a general lower bound for the dimension of $a_V(V)$ that gives dominance under our conditions (and hence the result by \autoref{main_thm_max_alb_dim}), under the assumption that the Albanese morphism of a compactification $X$ is surjective with connected fibers. The rest of this section is therefore devoted to showing this latter result under our hypotheses, see \autoref{thm:fiberspace}. This can be seen as a partial extension of \cite[Theorem 3.1]{Jiang_An_effective_version_of_a_thm_of_Kawamata_on_the_Albanese_map} for log canonical pairs. The argument for this final result is an adaptation of the more general approach taken in \cite[Theorem 3.4.5]{Baudin_Effective_Characterization_of_ordinary_abelian_varieties}, in positive characteristic.

\subsection{Acknowledgements}
We are very grateful to Sung Gi Park for carefully explaining us ideas of the proof of \autoref{hodge_module_lemma}. We would also like to thank Raymond Cheng, Yajnaseni Dutta, Stefano Filipazzi, Osamu Fujino, Christina Kapatsori, Fanjun Meng, Léo Navarro Chafloque, Zsolt Patakfalvi, Dan Petersen, Mihnea Popa, Roberto Svaldi and Nikolaos Tsakanikas for interesting discussions related to the content of this article. The first author was funded by grant \#804334 from the European Research
Council (ERC) and by grant \#20021/231484 from the
Swiss National Science Foundation. The second author was partially supported by the grant number VR2023-03837 of the \emph{Vetenskaprådet} and by the grant number 2023.0395 of the Knut and Alice Wallenberg foundation. 

\subsection{Notations}

\begin{itemize}
    \item A \emph{variety} is an integral quasi--projective scheme of finite type over $\bC$.
    \item A pair $(X, D)$ consisting of a projective variety $X$ and a reduced divisor $D$ is \emph{log smooth} if $X$ is smooth and $D$ has simple normal crossings (in short, snc).
    \item Given a smooth variety $V$, we say that a log smooth pair $(X, D)$ is a \emph{good compactification} of $V$ if $X$ admits an open embedding $V \hookrightarrow X$ whose complement is $D$.
    %\item We use the notation of pairs and quasi-projective varieties interchangeably.  For example, by a map $(X,D)\rightarrow (Y,\Delta)$ we mean a morphism $X\backslash D\rightarrow Y\backslash \Delta$. \JB{mmmh really?}
    \item An algebraic fiber space (or fibration) is a proper morphism $f \colon X \to Y$ of normal varieties such that $f_*\cO_X = \cO_Y$, that is, a proper morphism with connected fibers. 
    \item Given a smooth variety $V$, we set $\omega_V \coloneqq \det(\Omega^1_V)$. More generally, given a separated, Cohen--Macaulay scheme of finite type over $\bC$ (e.g. a simple normal crossing divisor on a smooth variety) with structural map $\pi \colon V \to \Spec(\bC)$, we set $\omega_V$ to be $\pi^!\cO_{\Spec \bC}[-\dim(V)]$ (see \stacksproj{0A9Y}), which is a sheaf by \stacksproj{0AWS}.
\end{itemize}

\section{Background and preliminary results}\label{sec:BG}

\subsection{Semi-abelian varieties and Albanese maps}\label{semi-alb}
We recall that a \emph{semi-abelian variety} (often found in literature under the name of \emph{quasi-abelian variety}) is an algebraic group $G$ arising as an extension of an abelian variety $A$ by an algebraic torus $\mathbb{G}_m^r$. The abelian variety $A$ above is said to be the \emph{compact part} of $G$, and $\bG_m^r = \ker(G \to A)$ is called the \emph{torus part} of $G$.

\begin{lemma}[{\cite[Proof of Lemma 3.8]{Fujino_On_quasi_Albanese_maps}}]\label{standard_compactification}
    Let $G$ be a semi-abelian variety with compact part $A$ and torus part $\bG_m^r$. Then there exists a good compactification $(Z, \Delta)$ of $G$ such that the induced map $\pi \colon Z \to A$ is a $(\bP^1)^r$-bundle, and such that $\Omega_Z^1(\log \Delta) \cong \cO_Z^{\oplus \dim(Z)}$.
\end{lemma}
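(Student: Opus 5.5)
The plan is to build $Z$ as an associated bundle from the structure of $G$ as a $\bG_m^r$-torsor over $A$. By Chevalley's structure theorem, and since $\bG_m^r$ is a split torus, the extension $1 \to \bG_m^r \to G \to A \to 0$ makes $G \to A$ a principal $\bG_m^r$-bundle, Zariski locally trivial by Hilbert 90. Such a torsor is the fibre product over $A$ of $r$ $\bG_m$-torsors, i.e. $G \cong L_1^\times \times_A \dots \times_A L_r^\times$, where $L_i$ are line bundles on $A$ (in fact algebraically trivial ones, classified by $\Ext^1(A,\bG_m) \cong \widehat{A}$) and $L_i^\times$ denotes the complement of the zero section. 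We will then set
\[ Z \coloneqq \bP(\cO_A \oplus L_1) \times_A \dots \times_A \bP(\cO_A \oplus L_r), \]
which is the contracted product $G \times^{\bG_m^r} (\bP^1)^r$ for the standard action of $\bG_m^r$ on $(\bP^1)^r$. By construction $\pi \colon Z \to A$ is a $(\bP^1)^r$-bundle, $Z$ is smooth and projective, and $G \subset Z$ is open with complement $\Delta = \sum_{i=1}^r (\Delta_i^0 + \Delta_i^\infty)$, where $\Delta_i^0, \Delta_i^\infty$ are the pullbacks of the zero and infinity sections of the $i$-th factor. In local trivializations these are coordinate hyperplanes $\{x_i = 0\}$, $\{x_i = \infty\}$, so $\Delta$ is snc and $(Z,\Delta)$ is a good compactification.

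For triviality of $\Omega_Z^1(\log \Delta)$ I will use the $G$-action. The action of $G$ on itself by translation extends to an action on $Z$: $G$ acts on $G \times (\bP^1)^r$ by left multiplication on the first factor, commuting with the $\bG_m^r$-action used to form the quotient. This action preserves $\Delta$, since it preserves the open orbit $G$. The infinitesimal action then gives a morphism $\mathrm{Lie}(G) \otimes \cO_Z \to T_Z(-\log \Delta)$, because vector fields coming from an action preserving $\Delta$ are tangent to $\Delta$. The claim is that this map is an isomorphism. Both sides are locally free of rank $\dim Z$, so it suffices to show that the determinant, a section of $\omega_Z(\Delta)^{-1}$, vanishes nowhere.

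To check this I will work locally over an open $U \subset A$ trivializing all $L_i$, with coordinates $x_i$ on $\bP^1$. The torus directions give the vector fields $x_i \partial_{x_i}$, which generate the log tangent sheaf along the fibres at every point, including $x_i = 0, \infty$. The directions of $\mathrm{Lie}(G)$ mapping onto $\mathrm{Lie}(A)$ give lifts of the translation-invariant vector fields on $A$; modulo the span of the $x_i\partial_{x_i}$ these are the horizontal fields, which span $\pi^*T_A$. Hence the map is surjective onto $T_Z(-\log\Delta)$ at each point, and therefore an isomorphism. Dualizing gives $\Omega_Z^1(\log\Delta) \cong \cO_Z^{\oplus \dim Z}$.

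The main obstacle is the local verification that lifted horizontal vector fields have the correct form near the boundary. Concretely, the transition functions of $L_i$ rescale $x_i$, so the lift of $\partial_{a}$ differs from the naive one by terms $g(a)\, x_i \partial_{x_i}$. These terms are still log vector fields, which is exactly what makes the argument work; one also needs to confirm that $\mathrm{Lie}(G) \to \mathrm{Lie}(A)$ is surjective with kernel $\mathrm{Lie}(\bG_m^r)$. Alternatively, I could avoid this computation by citing that toroidal $G$-equivariant embeddings have trivial log tangent bundle, but the direct check is short.
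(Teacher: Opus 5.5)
Your proof is correct, but it runs dually to the paper's. Both constructions of $Z$ agree: you take the contracted product $G\times^{\bG_m^r}(\bP^1)^r$, and your description as a fibre product of the $\bP(\cO_A\oplus L_i)$ just makes smoothness, projectivity and the snc property explicit. The difference is in how triviality is proved. The paper works with forms: the $\bG_m^r$-invariant forms $d\log x_i$ give a frame of $\Omega^1_{Z/A}(\log\Delta)$, and combined with $\pi^*$ of a frame of $\Omega^1_A$ this is said to yield a frame of $\Omega^1_Z(\log\Delta)$. You instead extend the translation action of all of $G$ to $Z$, which is possible because $G$ is commutative. The infinitesimal action gives $\mathrm{Lie}(G)\otimes\cO_Z\to T_Z(-\log\Delta)$, and you check fibrewise surjectivity via the locally split sequence $0\to T_{Z/A}(-\log\Delta)\to T_Z(-\log\Delta)\to\pi^*T_A\to 0$. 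Your route produces the global sections for free and makes explicit where the group structure of $G$ is used. In the form-theoretic argument, the final step tacitly requires lifting the relative frame $d\log x_i$ to global sections of $\Omega^1_Z(\log\Delta)$. That is an extension problem with obstruction in $H^1(A,\Omega^1_A)^{\oplus r}$, and it is solved only by using that $G$ is a group, e.g. by extending translation-invariant $1$-forms on $G$ with log poles. Indeed, for $Z=\bP(\cO_A\oplus L)$ with $L$ ample and $\Delta$ the two sections, the relative frame $d\log x$ exists but $\Omega^1_Z(\log\Delta)$ is not trivial. The paper's version is shorter and stays in the language of logarithmic forms used throughout. Yours gives the canonical trivialization $T_Z(-\log\Delta)\cong\mathrm{Lie}(G)\otimes\cO_Z$, and hence also $H^0(Z,\Omega^1_Z(\log\Delta))\cong\mathrm{Lie}(G)^\vee$.
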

\begin{proof}
     Recall that a global frame of a vector bundle means a set of global sections that forms a basis at each fiber. Let $\bG_m^r$ act on $G$ by translation, and on $(\bP^1)^r$ by the usual action \[(t_1, \dots, t_r) \cdot ([x_1; y_1], \dots, [x_r; y_r]) = ([t_1 x_1; y_1], \dots, [t_r x_r; y_r]).\] We can then define $(Z, \Delta)$ to be the diagonal quotient of $G \times (\bP^1, 0 + \infty)^r$ by $\bG_m^r$. By construction, the induced map $Z \to A$ is a $(\bP^1)^r$-bundle. Note that each logarithmic one-form $\dlog(x_i)$ is $\bG_m^r$-invariant, and therefore they form a global frame of $\Omega^1_{Z/A}(\log \Delta)$ (this can be checked on fibers of $\pi$). Pulling back a global frame of $\Omega_A^1$ therefore induces a global frame of $\Omega^1_Z(\log \Delta)$.
\end{proof}

There are other possible compactification that the one of \autoref{standard_compactification}, e.g. one can compactify $G$ as a $\mathbb{P}^r$-bundle. However, $Z$ is the compactification that we will use throughout this paper, and we will refer to it as the \emph{standard compactification of $G$}.

For any smooth complex quasi-projective variety $V$, there is a pair $(\Alb(V),a_V)$, consisting of a semi-abelian variety $\Alb(V)$ and a morphism $a_V\colon V\rightarrow \Alb(V)$ such that any morphism $f\colon V\rightarrow G$ to a semi-abelian variety $G$ factors (uniquely) through a homomorphism of algebraic groups $\Alb(V)\rightarrow G$. The pair $(\Alb(V),a_V)$ is unique up to unique isomorphism (and up to a choice of a base point in $V(\mathbb{C})$). We say that $\Alb(V)$ is the \emph{(semi) Albanese variety of $V$} and $a_V\colon V\rightarrow \Alb(V)$ is the \emph{(semi)Albanese morphism}.

It follows by construction (see \cite{Fujino_On_quasi_Albanese_maps}) that $\dim A(V)=\overline{q}(V)$, and that the compact part of $A(V)$ is simply the Albanese variety $A(X)$ of (any) good compactification $(X, D)$ of $V$. In addition, given a good compactification $(X, D)$ of $V$, we have the following useful commutative diagram (up to blowing up $X$ along the boundary):
\begin{equation}\label{eq:setupdiagram}
\begin{tikzcd}
    V \arrow[rr, hook] \arrow[dd, "a_V"'] &                 & X \arrow[dd, "g"] \arrow[lddd, "a_X", bend left=60] &  & \\
                    & & & & \\
    A(V) \arrow[rr, hook] \arrow[rd]      &                 & Z \arrow[ld, "a_Z"]                                 &  & {}  \\
    & A(X)\cong A(Z) & & &   
\end{tikzcd}
\end{equation}
where $Z$ is the standard compactification of $A(V)$. Throughout, if there is no chance of confusion, we will denote $a_X$ as $a$, while we will keep the subscript for $a_V$.

We recall the following crucial definition:
\begin{definition}
    We say that a smooth variety $V$ is of \emph{maximal Albanese dimension} if $\dim(V)=\dim(\alb_V(V))$.
\end{definition}
%In what follow we will need this easy lemma.
%\begin{lemma}\label{lem:Albanese_open_immersion}
%    Let $U \inc V$ be an open immersion of smooth varieties. Then the natural map $A(U) \to A(V)$ is surjective.
%\end{lemma}
%\begin{proof}
%    This follows immediately from the universal property, which implies that $V$ generates $A(V)$ as an algebraic group.
%\end{proof}

%\JB{did we ever use it?}\ST{It was used in the topologicial argument}

We conclude this paragraph with a useful isotriviality result about connected abelian Lie groups (such as semi--abelian varieties). This is surely well--known, but we could not find a reference.

\begin{lemma}\label{sequence_Lie_groups_split}
    Let $f \colon G \to H$ be a surjective morphism of connected abelian Lie groups, with $\ker(f)$ connected. Then there exists a homeomorphism $G \cong H \times \ker(f)$ over $H$.
\end{lemma}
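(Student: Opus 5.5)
We reduce to the structure theory of connected abelian Lie groups via the exponential map. A connected abelian (real) Lie group $G$ has surjective exponential map $\exp_G \colon \mathfrak{g} \to G$, which is a homomorphism with discrete kernel $\Lambda_G$. Hence $G \cong \mathfrak{g}/\Lambda_G$, and choosing a basis adapted to $\Lambda_G$ gives $G \cong \mathbb{R}^a \times (S^1)^b$. Write $K \coloneqq \ker(f)$; it is a closed subgroup, hence a Lie group, and connected by assumption. The plan is to construct a continuous section $s \colon H \to G$ of $f$ that is a group homomorphism. Then $(h,k) \mapsto s(h)k$ is a continuous bijective homomorphism $H \times K \to G$ over $H$, with continuous inverse $g \mapsto (f(g), g\, s(f(g))^{-1})$, so it is the required homeomorphism over $H$ (in fact an isomorphism of Lie groups).

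To build $s$, we first pass to Lie algebras. Since $f$ is a surjective morphism of Lie groups, $df \colon \mathfrak{g} \to \mathfrak{h}$ is surjective, for instance by Sard's theorem combined with homogeneity. Moreover $f \circ \exp_G = \exp_H \circ df$, so $df(\Lambda_G) \subseteq \Lambda_H$. The key claim is that $df(\Lambda_G) = \Lambda_H$ and that $\Lambda_G \cap \mathfrak{k}$ is a lattice of the right shape, where $\mathfrak{k} = \ker(df) = \mathrm{Lie}(K)$. For surjectivity of $df(\Lambda_G) \to \Lambda_H$, take $\lambda \in \Lambda_H$ and any lift $v \in \mathfrak{g}$. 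Then $\exp_G(v) \in K$, and since $K$ is connected abelian, $\exp_K$ is surjective, so $\exp_G(v) = \exp_G(w)$ for some $w \in \mathfrak{k}$. Thus $v - w \in \Lambda_G$ maps to $\lambda$. This is exactly where connectedness of $\ker(f)$ is used.

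We obtain a short exact sequence of finitely generated free abelian groups
\[ 0 \to \Lambda_G \cap \mathfrak{k} \to \Lambda_G \to \Lambda_H \to 0, \]
and $\Lambda_H$ is free, so this sequence splits. Choose $\sigma \colon \Lambda_H \to \Lambda_G$ with $df \circ \sigma = \mathrm{id}$. Let $\Lambda_H \otimes \mathbb{R} \subseteq \mathfrak{h}$ be its real span, and choose a complement $W$ with $\mathfrak{h} = (\Lambda_H \otimes \mathbb{R}) \oplus W$. Extend $\sigma$ $\mathbb{R}$-linearly on $\Lambda_H \otimes \mathbb{R}$, and on $W$ by an arbitrary linear lift through $df$. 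This gives a linear map $\tilde{s} \colon \mathfrak{h} \to \mathfrak{g}$ with $df \circ \tilde{s} = \mathrm{id}$ and $\tilde{s}(\Lambda_H) \subseteq \Lambda_G$. Hence $\tilde{s}$ descends to a continuous homomorphism $s \colon H = \mathfrak{h}/\Lambda_H \to \mathfrak{g}/\Lambda_G = G$ with $f \circ s = \mathrm{id}_H$, which concludes the argument.

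The main obstacle is the lattice step: showing that $\Lambda_G \to \Lambda_H$ is surjective, which needs connectedness of the kernel, and arranging that the real-linear section sends $\Lambda_H$ into $\Lambda_G$. The latter uses that $\Lambda_H$ is a discrete, hence free and $\mathbb{R}$-linearly independent, subgroup of $\mathfrak{h}$, so that the linear extension on its real span is well defined.
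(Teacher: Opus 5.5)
Your proof is correct and follows essentially the same route as the paper. Both arguments reduce the statement to finding a linear section of $\mathrm{Lie}(G)\to\mathrm{Lie}(H)$ that carries the lattice $\pi_1(H)=\Lambda_H$ into $\pi_1(G)=\Lambda_G$, once connectedness of $\ker(f)$ has shown that the lattice map is surjective. You differ only in proving that surjectivity directly from surjectivity of $\exp_K$ instead of the homotopy long exact sequence of $G\to H$, and in spelling out how the homomorphic section $s$ gives the homeomorphism $H\times\ker(f)\cong G$ over $H$, which the paper leaves implicit.
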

\begin{proof}
    By assumption, we have a commutative diagram \[ \begin{tikzcd}
        0 \arrow[rr] &  & \pi_1(G) \arrow[rr] \arrow[d, two heads, "\pi_1(f)"] &  & \Lie(G) \arrow[rr] \arrow[d, two heads, "\Lie(f)"] &  & G \arrow[rr] \arrow[d, two heads, "f"] &  & 0 \\
        0 \arrow[rr] &  & \pi_1(H) \arrow[rr]                      &  & \Lie(H) \arrow[rr]                      &  & H \arrow[rr]                      &  & 0.
    \end{tikzcd} \]  where the morphism $\pi_1(G) \to \pi_1(H)$ is surjective by the associated long exact sequence in homotopy groups associated to the fibration $G \to H$ (in the sense of topology) and the fact that $\ker(f)$ is connected. Thus, all we need to find is a splitting of $\Lie(G) \to \Lie(H)$ inducing a splitting of $\pi_1(G) \to \pi_1(H)$. Take a $\bZ$--basis $\{v_1, \dots, v_s, v_{s + 1}, \dots, v_n\}$ of $\pi_1(G)$ such that $\{v_{s + 1}, \dots, v_n\}$ is a $\bZ$--basis of $\ker(\pi_1(f))$. Extend $\{\pi_1(f)(v_1), \dots, \pi_1(f)(v_s)\}$ into a $\bR$--basis $\{\pi_1(f)(v_1), \dots, \pi_1(f)(v_s), \pi_1(f)(w_1), \dots, \pi_1(f)(w_r)\}$ of $\Lie(H)$. Sending each $\pi_1(f)(v_i)$ to $v_i$ with $1 \leq i \leq s$ (resp. $\pi_1(f)(w_j)$ to $w_j$ with $1 \leq j \leq r$) gives the required splitting.
\end{proof}

\subsection{Generic vanishing}
Let $A$ be an abelian variety of dimension $g$ with dual $\bighat{A}$ and normalized Poincaré bundle $\mathscr{P}$. Given a coherent sheaf $\mathscr{F}$ on $A$ and an integer $i \geq 0$, we define the cohomological support loci of $\mathscr{F}$ as
\[ V^i(A, \mathscr{F})\coloneqq \set{\alpha \in \Pic^0(A)}{H^i(A, \mathscr{F}\otimes\alpha)\neq 0} \inc \bighat{A}.\]
We say that $\mathscr{F}$ is a GV-sheaf if $\codim V^i(A,\mathscr{F})\geq i$ for every $i\geq 0$. If this is the case, then by \cite[Corollary 3.5]{Hacon_A_derived_category_approach_to_generic_vanishing} we have a chain of inclusions
\[V^0(A,\mathscr{F})\supseteq V^1(A,\mathscr{F})\supseteq\cdots\supseteq V^g(A,\mathscr{F}).\]
A key tool to extract geometric information from the cohomological support loci is the \emph{symmetric Fourier--Mukai transform}. This functor, denoted by
\[\FM_A\colon D^b(A)^{op}\longrightarrow D^b(\bighat A),\] is obtained by composing the Grothendieck dual $\cR\HHom(\star, \cO_A)[g]$ with the integral transform with kernel $\mathscr{P}$ on $A\times\bighat A$, namely the functor $Rp_{\hat A, *}(p_A^*(\star)\otimes^L\mathscr{P})$. A celebrated result of Mukai \cite{Mukai_Duality_between_DX_and_D_hat_X} (see also \cite{Schnell_Fourier_Mukai_transform_made_easy}) asserts that this functor is an equivalence of triangulated categories, satisfying
\begin{equation}\label{eq:identity}
    \FM_A\circ\FM_{\hat A}\cong\id_{\bighat A}, \quad\FM_{\bighat A}\circ\FM_{ A}\cong \id_{A}
\end{equation} after identifying $A$ with its double dual, and the formula 
\begin{equation}\label{FM_and_cohom}
     H^i(A, \mathscr{F} \otimes \alpha)^{\vee} \cong \Tor_i(\FM_A(\mathscr{F}), \bC(\alpha^{\vee}))
\end{equation} for all $\alpha \in \Pic^0(A)$, $i \geq 0$ and $\mathscr{F} \in \Coh(A)$ \cite[Propositions 4.1 and 5.1]{Schnell_Fourier_Mukai_transform_made_easy}. Much else could be said, which goes beyond the scope of this article. An important result for us is the following theorem of Pareschi--Popa (see also \cite{Hacon_A_derived_category_approach_to_generic_vanishing}):
\begin{thm}[{\cite[Theorem 2.2]{PareschiPopa2009}}]\label{thm:equivalent GV} The following are equivalent for a coherent sheaf $\mathscr{F}$ on an abelian variety $A$:
\begin{itemize}
    \item the sheaf $\mathscr{F}$ is a GV-sheaf;
    \item the complex $\operatorname{FM}_A\left(\mathscr{F}\right)$ is concentrated in degree zero (in this case, its support is precisely $V^0(A,\mathscr{F})$).
\end{itemize}
\end{thm}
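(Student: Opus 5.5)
The statement to prove is the Pareschi--Popa equivalence (Theorem~\ref{thm:equivalent GV}): $\mathscr{F}$ is GV if and only if $\FM_A(\mathscr{F})$ is a sheaf in degree zero, whose support is then $V^0(A,\mathscr{F})$. The plan is to work with the complex $\FM_A(\mathscr{F})$ directly and convert the codimension conditions on the loci $V^i$ into depth and amplitude conditions on it. The tools are base change, encoded in \eqref{FM_and_cohom}, and Mukai's inversion \eqref{eq:identity}. Throughout, write $\mathcal{G} \coloneqq \FM_A(\mathscr{F})$ and let $\mathcal{H}^j(\mathcal{G})$ denote its cohomology sheaves on $\bighat A$.

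\textbf{Amplitude of $\mathcal{G}$.} Unwinding the definition, $\mathcal{G} = Rp_{\hat A,*}\big(p_A^*\cR\HHom(\mathscr{F},\cO_A)[g]\otimes\mathscr{P}\big)$. The cohomology sheaves of $\cR\HHom(\mathscr{F},\cO_A)[g]$ lie in degrees $[-g,0]$, and $Rp_{\hat A,*}$ adds degrees $[0,g]$. Hence $\mathcal{G}$ has cohomology in degrees $[-g,g]$.

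\textbf{GV $\Rightarrow$ concentrated in degree zero.} The point is that \eqref{FM_and_cohom} identifies $\Tor_i(\mathcal{G},\bC(\alpha^\vee))$ with $H^i(A,\mathscr{F}\otimes\alpha)^\vee$.
\begin{enumerate}
\item Negative degrees vanish. Let $j$ be the largest index with $\mathcal{H}^{-j}(\mathcal{G})\neq 0$, and suppose $j>0$. At a point of its support, the hypertor spectral sequence shows that $\Tor_{j}(\mathcal{G},\bC(\alpha^\vee))$ surjects onto the nonzero fibre of $\mathcal{H}^{-j}$. So this support lies in $V^j$ (after the harmless identification $\alpha\mapsto\alpha^\vee$).
\item Refine this to depth. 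Apply the duality $\cR\HHom(\mathcal{G},\cO_{\bighat A})$, which by Grothendieck--Verdier/Mukai compatibility is $(-1_{\hat A})^*$ of the ordinary (non-dualized) transform of $\mathscr{F}$, up to shift. Then use the standard criterion: a complex $\mathcal{K}$ with $\codim \Supp \mathcal{H}^{i}(\mathcal{K})$ large relative to $i$ has dual concentrated in one degree. Concretely, the GV inequalities $\codim V^i\ge i$ translate, via base change, into $\codim\Supp \mathcal{E}xt^i(\mathcal{G}^\vee,\cO)\ge i$. By the Auslander--Buchsbaum / local duality criterion (Hacon's argument), this forces $\mathcal{G}$ to have no cohomology in degrees $\neq 0$.
\item Positive degrees vanish. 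These are handled by the same count applied to the top degree, since $\mathcal{G}$ is the derived dual of a pushforward of a sheaf.
\end{enumerate}
This step is the main obstacle: correctly matching the indices between $V^i$, the $\Tor$ groups, and the local cohomology or depth of $\mathcal{G}$. I would follow Hacon's derived-category approach here rather than redo the commutative algebra.

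\textbf{Concentrated in degree zero $\Rightarrow$ GV.} Suppose $\mathcal{G}$ is a sheaf $\widehat{\mathcal{G}}$. Then \eqref{FM_and_cohom} gives $V^i(A,\mathscr{F})=\{\alpha : \Tor_i(\widehat{\mathcal{G}},\bC(\alpha^\vee))\neq 0\}$ (up to the sign identification). The locus where $\Tor_i$ is nonzero is where the local projective dimension of $\widehat{\mathcal{G}}$ is $\ge i$. Now use \eqref{eq:identity}: $\mathscr{F}=\FM_{\hat A}(\widehat{\mathcal{G}})$, and $\mathscr{F}$ is a sheaf. This forces $\mathcal{E}xt^i(\widehat{\mathcal{G}},\cO_{\bighat A})$ to be supported in codimension $\ge i$, since otherwise it would contribute cohomology in nonzero degree. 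Then Auslander--Buchsbaum gives that the locus where the projective dimension is $\ge i$ has codimension $\ge i$, i.e.\ $\codim V^i\ge i$.

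\textbf{Support statement.} Finally, with $i=0$, Nakayama's lemma gives $\Tor_0(\widehat{\mathcal{G}},\bC(\alpha^\vee))\neq0$ exactly on $\Supp\widehat{\mathcal{G}}$, so the support of $\widehat{\mathcal{G}}$ is $V^0(A,\mathscr{F})$.
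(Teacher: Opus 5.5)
\textbf{There is a genuine gap in the direction GV $\Rightarrow$ $\FM_A(\mathscr F)$ is a sheaf.} The paper gives no proof of this statement; it quotes Pareschi--Popa (see also Hacon), so your argument has to stand on its own.

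Your converse direction and the support statement are essentially right. The detour through $\FM_{\bighat{A}}(\widehat{\mathcal G})=\mathscr F$ is not needed, and the reason you give for it is not an argument. For \emph{any} coherent sheaf $M$ on the smooth variety $\bighat{A}$, $\mathcal{E}xt^j(M,\cO_{\bighat{A}})$ vanishes at every point $\eta$ with $\dim\cO_{\bighat{A},\eta}<j$, because a regular local ring of dimension $d$ has global dimension $d$. Hence $\{y:\Tor_i(M,\bC(y))\neq0\}=\bigcup_{j\ge i}\Supp\mathcal{E}xt^j(M,\cO_{\bighat{A}})$ automatically has codimension $\ge i$.

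The gap is the vanishing of $\mathcal H^n(\mathcal G)$ for $n<0$. That is the entire content of the theorem, and you defer it to ``Hacon's argument''. What you do write for it would not work.
\begin{itemize}
\item In step 1 the map goes the other way. For the \emph{lowest} cohomology sheaf, the hypertor spectral sequence only gives $\mathcal H^{-j}(\mathcal G)\otimes\bC(y)\to\Tor_j(\mathcal G,\bC(y))$, factoring through the quotient $E_\infty^{0,-j}$. This map can be zero, because of differentials from $\Tor_r(\mathcal H^{-j+r-1}(\mathcal G),\bC(y))$. For example, take $\mathcal G=[\cO^{2}\xrightarrow{(s,t)}\cO]$ on $\mathbb{A}^2$ in degrees $-1,0$: the map $\mathcal H^{-1}(\mathcal G)\otimes\bC(0)\to\Tor_1(\mathcal G,\bC(0))$ vanishes.
\item Nakayama-type arguments only control the \emph{top} cohomology sheaf. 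In any case, a codimension bound on $\Supp\mathcal H^{-j}(\mathcal G)$ would not give vanishing.
\item In step 2 the criterion is misstated. A codimension condition on the $\mathcal H^i(\mathcal K)$ bounds $R\mathcal{H}om(\mathcal K,\cO)$ from one side only; it does not make it concentrated in one degree.
\item Step 2 is also applied to the wrong complex. $\mathcal{E}xt^i(\mathcal G^\vee,\cO)$ is just $\mathcal H^i(\mathcal G)$, whereas GV controls the cohomology of $\mathcal G^\vee$.
\end{itemize}

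The missing argument is short.
\begin{enumerate}
\item Set $\mathcal K\coloneqq(-1_{\bighat{A}})^*Rp_{\hat{A},*}(p_A^*\mathscr F\otimes\mathscr P)$. Its cohomology sheaves $\mathcal K^i$ sit in degrees $[0,g]$, and Mukai's duality gives $\mathcal G\cong R\mathcal{H}om(\mathcal K,\cO_{\bighat{A}})$.
\item Apply derived base change and Nakayama near $y$ to the largest $k$ with $y\in\Supp\mathcal K^k$. This gives $\Supp\mathcal K^i\subseteq-\bigcup_{k\ge i}V^k(A,\mathscr F)$, so GV yields $\codim\Supp\mathcal K^i\ge i$. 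Neither the chain of inclusions nor any depth refinement is needed.
\item Consider the spectral sequence $E_2^{p,q}=\mathcal{E}xt^p(\mathcal K^{-q},\cO_{\bighat{A}})\Rightarrow\mathcal H^{p+q}(\mathcal G)$. Since $\bighat{A}$ is Cohen--Macaulay, $E_2^{p,q}=0$ for $p<\codim\Supp\mathcal K^{-q}$, hence whenever $p+q<0$. So $\mathcal H^n(\mathcal G)=0$ for $n<0$.
\item Positive degrees are then handled as in your step 3, with no hypothesis on $\mathscr F$ beyond being a sheaf. By \eqref{FM_and_cohom}, $\mathcal G\otimes^{L}\bC(y)$ has cohomology only in degrees $[-g,0]$, and the top cohomology sheaf of $\mathcal G$ commutes with $\otimes\,\bC(y)$.
\end{enumerate}
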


The following standard result will be our starting point:

\begin{proposition}\label{lem:GVmorphism}
   Let $(X,D)$ be a log smooth pair, and consider a morphism $b\colon X\rightarrow A$ to an abelian variety of dimension $g$. Then the following statements hold:
\begin{enumerate}
    \item\label{logGV} the sheaf $b_*\omega_X(D)$ is a GV-sheaf and its cohomology support loci are union of torsion translates of subtori of $\bighat A$.
    \end{enumerate}
    Suppose in addition that 
    \[ h^0(X,\omega_X(D))=h^0(X,\omega_X(D)^{\otimes 2})=1 \]
    and that $b^* \colon \Pic^0(A) \to \Pic^0(X)$ is injective (e.g. $b$ is the Albanese morphism). Then
    \begin{enumerate}[start=2]
        \item\label{lem:noZ} there is no positive-dimensional closed subvariety $Z\subseteq V^0(A,b_*\omega_X(D))$ such that $-Z\subseteq V^0(A, b_*\omega_X(D))$. In particular $\mathcal{O}_A$ is an isolated point of $V^0(A,b_*\omega_X(D))$.
        \item\label{lem:GVmorphismitem}  The sheaf $b_*\omega_X(D)$ has a direct summand which is a non-zero unipotent vector bundle. In particular, there exists a surjective morphism $b_*\omega_X(D)\rightarrow \mathcal{O}_A$ and $H^g(A, b_*\omega_X(D)) \neq 0$, and $b$ is surjective.
    \end{enumerate}
\end{proposition}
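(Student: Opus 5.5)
For part (1), I will use the standard generic vanishing results for log canonical bundles. The pair $(X,D)$ is log smooth, so $b_*\omega_X(D)$ admits a decomposition-type statement in the spirit of Kollár's theorems for pairs. One route is to note that $\omega_X(D)$ is the lowest Hodge piece of the mixed Hodge module $Rj_*\mathbb{Q}^H_V[\dim V]$, where $j\colon V\to X$ is the inclusion. Generic vanishing for mixed Hodge modules (Popa--Schnell) then gives that $R^ib_*\omega_X(D)$ are GV-sheaves whose support loci are torsion translates of subtori. Alternatively, one can reduce via a cyclic-cover / Esnault--Viehweg argument to the usual Green--Lazarsfeld--Hacon theorem for $\omega$ of a smooth projective variety. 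The torsion-translate statement comes from Simpson's theorem in the rank-one local system setting, or from the Hodge module version. I would simply cite these results.

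For part (2), suppose $Z\subseteq V^0$ is positive-dimensional with $-Z\subseteq V^0$. For $\alpha\in Z$, we have $H^0(A,b_*\omega_X(D)\otimes\alpha)=H^0(X,\omega_X(D)\otimes b^*\alpha)\neq0$, and likewise for $\alpha^{-1}$. Multiplying sections gives a nonzero section of $\omega_X(D)^{\otimes2}$ (nonzero because $X$ is integral). Since $h^0(\omega_X(D)^{\otimes 2})=1$, all these products produce the same divisor $E\in|2(K_X+D)|$. We therefore get $D_\alpha+D_{\alpha^{-1}}=E$ for effective divisors $D_\alpha\in|K_X+D+b^*\alpha|$. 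The divisors $D_\alpha$ vary in a positive-dimensional family, since $b^*$ is injective and so distinct $\alpha$ give non-isomorphic line bundles. But $D_\alpha\leq E$ allows only finitely many choices, a contradiction. Applying this to a component through $\mathcal{O}_A$: since $h^0(\omega_X(D))=1$, $\mathcal{O}_A\in V^0$. If $\mathcal{O}_A$ were not isolated, the component through it is a subtorus $T$ (a torsion translate containing $0$), and $T=-T$, which is forbidden.

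For part (3), take $\FM_A(b_*\omega_X(D))$, which is a sheaf $\mathcal{G}$ by \autoref{thm:equivalent GV}, supported on $V^0$. By (2), $\{\mathcal{O}_A\}$ is a connected component of its support, so $\mathcal{G}=\mathcal{G}_0\oplus\mathcal{G}'$ with $\mathcal{G}_0$ a nonzero sheaf of finite length supported at the origin. Applying $\FM_{\bighat A}$ and \eqref{eq:identity} splits $b_*\omega_X(D)$ accordingly. A finite-length module at $0$ has a filtration by copies of $\mathbb{C}(0)$, and $\FM$ of $\mathbb{C}(0)$ is $\mathcal{O}_A$ up to shift. So the summand is a successive extension of $\mathcal{O}_A$, i.e. a unipotent vector bundle $U\neq0$. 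Every such bundle surjects onto $\mathcal{O}_A$ (dually, it contains $\mathcal{O}_A$ as its last filtered piece), and $H^g(A,U)\neq0$ by Serre duality, since $H^0(U^\vee)\neq0$. Hence $H^g(A,b_*\omega_X(D))\neq0$. This forces $b$ to be surjective: if $\dim b(X)<g$, the sheaf $b_*\omega_X(D)$ would be supported on a proper subvariety and $H^g$ would vanish.

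I expect the main obstacle to be part (1), namely getting the log version of generic vanishing with torsion-translate loci. In the paper I would handle it entirely by citation.
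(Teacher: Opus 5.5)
Your proposal is correct and follows the paper's route. Part (1) is handled by citation in both. For part (2) you write out the standard product-of-sections argument, which the paper only cites as standard (from \cite{MLPT2023}). For part (3) you split off the summand of $\FM_A(b_*\omega_X(D))$ supported at the isolated origin and apply $\FM_{\bighat{A}}$, exactly as the paper does. The only difference is that you verify by hand Mukai's computation that this summand is a unipotent vector bundle, and you spell out the consequences ($U \to \cO_A$, $H^g \neq 0$ via Serre duality, surjectivity of $b$ via Grothendieck vanishing), which the paper leaves implicit.
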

\begin{proof}
The sheaf $b_*\omega_X(D)$ is a GV-sheaf by \cite[Variant 5.5]{popa2014direct}, and the components of $V^0(X,a_*\omega_X(D))$ are torsion translates of subtori by \cite[Theorem 1.3]{shibata2016generic}. Item \autoref{lem:noZ} is standard, and a full proof can for example be found in \cite[Lemma 1.19]{MLPT2023}.

We are left with proving Item \autoref{lem:GVmorphismitem}. Since $\cO_A$ is an isolated point of $V^0(A, b_*\omega_X(D))$, we know by combining \autoref{logGV} and \autoref{thm:equivalent GV} that the origin is an isolated point of the support of $\FM_A(b_*\omega_X(D))$. In particular we can write 
    \[ \FM_A(b_*\omega_X(D)) = \mathscr{G}\oplus\mathscr{G'} \] where $\mathscr{G}$ is supported on $\mathcal{O}_A$. Applying $\FM_{\bighat{A}}$ and using \autoref{eq:identity} yields that $b_*\omega_X(D)$ contains $\FM_{\bighat{A}}(\mathscr{G})$ as a direct summand. Since this sheaf is a unipotent vector bundle by \cite[Example 2.9]{Mukai_Duality_between_DX_and_D_hat_X}, the statement is proven.
\end{proof}

\subsection{Higher direct images of logarithmic canonical sheaves}

Our objective here is to generalize a result of Kollár about higher direct images of canonical sheaves to logarithmic sheaves \cite{Kollar_Higher_direct_images_of_dualizing_sheaves_I, Kollar_Higher_direct_images_of_dualizing_sheaves_II} in a certain case (see \autoref{hodge_module_lemma}). We warmly thank Sung Gi Park for carefully explaining us ideas around the proof of this result. Throughout, we will extensively use the language of filtered right $\cD$--modules and Hodge modules \cite{Saito_Modules_de_Hodge_polarisables, Saito_Mixed_Hodge_modules}. The direct image functor on $\cD_X$--modules along a morphism $f \colon X \to Y$ will be denoted $f_+$ in order not to be confused with the functor $Rf_*$ on quasi--coherent sheaves. All filtrations will be denoted by $F$, and given a non--zero filtered $\cD$--module $\cM$, we will denote by $p(\cM) \in \bZ$ the smallest integer such that $F_{p(\cM)}(\cM) \neq 0$. Given a smooth variety $X$ of dimension $n$, the $\cD_X$--module $\omega_X$ will be given its canonical filtration (namely $F_{i}(\omega_X) = 0$ for $i < -n$, and $F_{i}(\omega_X) = \omega_X$ for all $i \geq -n$).

\begin{lemma}\label{preparation_lemma_HM}
    Let $X$ be a normal variety of dimension $n$, let $D$ be a Cartier divisor, and set $U \coloneqq X \setminus D$ with corresponding open immersion $j \colon U \to X$. Then the following statements hold:
    \begin{enumerate}
    \item\label{itm:exactness_open_immersion} the functor $j_+$ is exact on $\cD_U$--modules;
    \end{enumerate}
    If $(X, D)$ is in addition log smooth, then:
    \begin{enumerate}[start=2]
        \item\label{itm:filtration_on_pushforward} given a filtered $\cD_U$--module $\cM$ underlying a mixed Hodge module, we have that $p(j_+\cM) \geq p(\cM)$;
        \item\label{itm:case_of_trivial_HM} we have $F_{-n}(j_+\omega_U) = \omega_X(D)$.
    \end{enumerate}
\end{lemma}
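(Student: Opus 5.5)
The three items are local on $X$, and I will treat them in order.

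For \autoref{itm:exactness_open_immersion}, note that $D$ is Cartier, so the immersion $j$ is affine. For $\cD$--modules, $j_+$ is then simply the quasi-coherent pushforward $j_*$, equipped with its natural $\cD_X$--module structure. Indeed, $j_+$ is computed as $Rj_*$ of the relative de Rham-type complex, and for an open immersion that complex is the module itself. Because $j$ is affine, $R^ij_* = 0$ for $i > 0$ on quasi-coherent sheaves. Hence $j_+ = j_*$, which is exact. I would add only that every $\cD_U$--module is quasi-coherent over $\cO_U$, so this vanishing applies.

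For \autoref{itm:filtration_on_pushforward}, I will use Saito's explicit description of the Hodge filtration on $j_*\cM$ when $D$ is locally defined by a function. For a general snc divisor, I can either write $j$ locally as a composite of open immersions that each remove one smooth component, or use the normal crossing version of the formula directly. Locally let $D = \{t = 0\}$, with $t$ a product of coordinates, and let $\iota \colon X \to X \times \bA^1$ be the graph embedding of $t$. Saito defines
\[ F_p(j_*\cM) = \sum_{i \ge 0} F_i\cD_X \cdot \bigl(t^{-1}\, V^{\alpha}\cap j_*F_{p-i}\cM\bigr), \]
suitably interpreted via $V$--filtrations. In the normal crossing case this becomes a sum over $F_{p-i}$ of $j_*\cM$ intersected with $V$--filtration pieces. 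Every summand involves $F_{p-i}\cM$ with $i \geq 0$, and these vanish when $p < p(\cM)$. So $F_p(j_+\cM) = 0$ for $p < p(\cM)$, which gives $p(j_+\cM) \geq p(\cM)$. An equivalent formulation: the formula shows $F_p(j_+\cM)|_U = F_p\cM$, and $F_p(j_+\cM)$ is generated, modulo the $\cD$--action increasing the index, by sections whose restriction lies in $F_p\cM$. I expect this step to be the main obstacle. The difficulty is citing the precise form of Saito's formula for right $\cD$--modules in the snc situation, where the indexing conventions shift between left and right modules.

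For \autoref{itm:case_of_trivial_HM}, I will compute directly in local coordinates $x_1,\dots,x_n$ with $D = \{x_1\cdots x_k = 0\}$. Here $j_*\omega_U = \omega_X(*D)$ carries the Hodge filtration of the mixed Hodge module $j_*\bQ^H_U[n]$. It is known (Saito; compare Deligne's logarithmic de Rham complex) that on the left-module side
\[ F_p\,\cO_X(*D) = F_p\cD_X \cdot \cO_X(D), \]
so the lowest piece is $\cO_X(D)$. Translating to right modules via $\omega_X\otimes -$ shifts the index by $-n$. This gives $F_{-n}(j_+\omega_U) = \omega_X(D)$, and $F_p = 0$ for $p < -n$, consistent with \autoref{itm:filtration_on_pushforward}. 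One can also verify the claim directly: $V$--filtration computation in each variable $x_i$ shows the lowest Hodge piece of $\cO(*\{x_i=0\})$ is $x_i^{-1}\cO$. The product structure then reduces the snc case to this one-variable case via a Thom--Sebastiani/external product argument.
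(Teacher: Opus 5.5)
Your items (1) and (3) follow the paper. For (1), $j$ is affine, so $j_+\cM = Rj_*\cM = j_*\cM$ and $j_+$ is exact. For (3), you use the known snc formula $F_p\cO_X(*D) = F_p\cD_X\cdot \cO_X(D)$ (the paper cites \cite[Proposition 10.1]{Mustata_Popa_Hodge_ideals}) and pass to right modules with the shift by $-n$.

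For item (2) your route is different and heavier. You invoke Saito's explicit $V$-filtration description of $F_\bullet(j_*\cM)$ through the graph embedding and note that every generating term involves only $F_{p-i}\cM$ with $i \geq 0$. This does work, since the index shift from the graph embedding cancels and the lowest index is preserved. It also describes the whole filtration. The cost is that you must state Saito's formula exactly, with correct left/right conventions, which you rightly flag as delicate.

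The paper needs none of this. It uses only two facts: $F_p(j_+\cM)$ is an $\cO_X$-submodule of $j_+\cM = j_*\cM$ whose restriction to $U$ is $F_p\cM$, and $j_*\cM$ has no nonzero local sections that vanish on $U$. Hence $F_p\cM = 0$ forces $F_p(j_+\cM) = 0$.

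You already wrote down the restriction property in your ``equivalent formulation''. Adding that one observation finishes (2) in a line, so the step you expected to be the main obstacle is actually the easiest. Neither argument for (2) uses the snc hypothesis, only that $D$ is locally principal, so your reduction to removing one smooth component at a time is unnecessary.
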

\begin{proof}
    Since $j$ is an open immersion, it holds by construction that for any $\cD_U$--module $\cM$, the underlying quasi--coherent sheaf of $j_+\cM$ is $Rj_*\cM$. Since $j$ is exact, \autoref{itm:exactness_open_immersion} follows. To see \autoref{itm:filtration_on_pushforward}, note that $F_p(j_+\cM)|_U = F_p(\cM)$ for all $p \in \bZ$ by the discussion neighbouring \cite[Theorem 30.1]{Schnell_An_overview_of_Saito_theory_of_mixed_Hodge_modules}. Since $j_+\cM = j_*\cM$ as a quasi--coherent sheaf, no non--zero piece of the filtration of $F_p(j_+\cM)$ can be annihilated by restricting to $U$, so \autoref{itm:filtration_on_pushforward} is proven. Finally, \autoref{itm:case_of_trivial_HM} follows from \cite[Proposition 10.1]{Mustata_Popa_Hodge_ideals} (see Definition 9.4 in \emph{loc. cit.}).
\end{proof}

\begin{thm}\label{hodge_module_lemma}
    Let $f \colon X \to Y$ be a projective and surjective morphism of complex varieties of relative dimension $s$, with $X$ smooth. Let $\Delta \inc Y$ be a Cartier divisor, and assume that $D \coloneqq f^{-1}(\Delta)$ is snc. Then each sheaf $R^if_*\omega_X(D)$ is torsion--free (in particular $R^if_*\omega_X(D) = 0$ for all $i > s$), and there is an isomorphism \[ Rf_*\omega_X(D) \cong \bigoplus_{i \in \bZ} R^if_*\omega_X(D)[-i]. \] If in addition $Y$ is smooth, $f_*\cO_X = \cO_Y$ and $\Delta$ is also snc, then $R^sf_*\omega_X(D) \cong \omega_Y(\Delta)$.
\end{thm}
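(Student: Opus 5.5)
Set $U \coloneqq X \setminus D = f^{-1}(Y \setminus \Delta)$ and $j \colon U \to X$. The plan is to realize $\omega_X(D)$ as the lowest Hodge piece of a mixed Hodge module, push it forward along $f$ using Saito's theory, and read off the statements from the lowest piece of the Hodge filtration, following Kollár's strategy via Saito's version of his theorem.

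By \autoref{preparation_lemma_HM}\autoref{itm:case_of_trivial_HM}, $\omega_X(D) = F_{-n}(j_+\omega_U)$, where $n = \dim X$ and $j_+\omega_U$ underlies the mixed Hodge module $j_*\bQ^H_U[n]$. Write $f_U \colon U \to Y \setminus \Delta$ and $j' \colon Y\setminus\Delta \to Y$. Since $f \circ j = j' \circ f_U$, we get $f_+ j_+ \omega_U \cong j'_+ f_{U,+}\omega_U$.

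Because $f_U$ is projective and $\omega_U$ is pure, the decomposition theorem gives $f_{U,+}\omega_U \cong \bigoplus_i \cH^i[-i]$ in the derived category of Hodge modules, hence strictly filtered. By \autoref{preparation_lemma_HM}\autoref{itm:exactness_open_immersion}, $j'_+$ is exact, and it is a functor of mixed Hodge modules along the Cartier divisor $\Delta$ (the open embedding is affine), so $f_+j_+\omega_U \cong \bigoplus_i j'_+\cH^i[-i]$. Saito's strictness for projective morphisms identifies the lowest graded piece: taking $F_{p}$ with $p = -n$ commutes with $f_+$, and $\mathrm{Gr}^F_{-n}$ of the de Rham complex of $j_+\omega_U$ is just $\omega_X(D)$, since it is the lowest nonzero piece and $p(j_+\omega_U) \geq -n$ by \autoref{itm:filtration_on_pushforward}. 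This yields
\[ Rf_*\omega_X(D) \cong \bigoplus_i F_{-n}(j'_+\cH^i)[-i],\]
giving the splitting. The delicate point is to check that the relevant filtered index shift matches, namely $F_{p(\cdot)}$ of the pushforward corresponds to $F_{-n}$ upstairs, exactly as in Saito's proof of Kollár's theorem.

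For torsion-freeness, note that each $\cH^i$ is a pure Hodge module on $Y \setminus \Delta$. Its lowest Hodge piece $F_{-n}\cH^i$ is torsion-free, being a lowest piece of a Hodge module with strict support decomposition: by Saito, this lowest piece is $R^if_*\omega$ and is torsion-free, and components with smaller support have higher $p$. Then $F_{-n}(j'_+\cH^i)$ is a subsheaf of $j'_*$ of a torsion-free sheaf. Any torsion would be supported either inside $Y\setminus\Delta$, which is excluded, or inside $\Delta$; the latter is impossible because $j'_+$ is $j'_*$ on sheaves (\autoref{itm:filtration_on_pushforward}). Vanishing for $i > s$ then follows, since the generic rank is $h^i$ of the fiber's $\omega$, which is zero for $i > s$. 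I expect the main obstacle to be justifying that the pieces $\cH^i$ with smaller strict support contribute nothing in the lowest degree $-n$.

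For the last claim, assume $Y$ is smooth, $f_*\cO_X = \cO_Y$ and $\Delta$ is snc. Then $R^sf_*\omega_X(D)$ is torsion-free of rank $h^s(F,\omega_F) = h^0(F,\cO_F) = 1$ at a general fiber $F$. On $Y\setminus\Delta$, the top piece $\cH^s$ contains $\omega_{Y\setminus\Delta}$ as its summand of lowest Hodge degree, via the trace map; this is Kollár's $R^sf_*\omega_X = \omega_Y$. Applying \autoref{preparation_lemma_HM}\autoref{itm:case_of_trivial_HM} on $Y$ gives $F_{-n}(j'_+\omega_{Y\setminus\Delta}) = \omega_Y(\Delta)$, with the shift by $s$ accounted for. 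Hence $R^sf_*\omega_X(D) \cong \omega_Y(\Delta)$.
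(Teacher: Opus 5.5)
Your proposal is correct and follows essentially the same route as the paper. You write $\omega_X(D)=F_{-n}(j_+\omega_U)$, apply the decomposition theorem over $Y\setminus\Delta$ and the exact functor $j'_+$, and use Saito's strictness for $\mathrm{Gr}^F_{-n}\mathrm{DR}$ to get $Rf_*\omega_X(D)\cong\bigoplus_i F_{-n}(j'_+\cH^i)[-i]$; for the last claim you identify the generic-support summand of $\cH^s$ with $\omega_{Y\setminus\Delta}(-s)$ and apply \autoref{preparation_lemma_HM}.\autoref{itm:case_of_trivial_HM} on $Y$.

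The differences are minor. You get torsion-freeness from Koll\'ar--Saito torsion-freeness of $F_{-n}\cH^i=R^if_{U,*}\omega_U$ together with $F_{-n}(j'_+\cH^i)\subset j'_*F_{-n}\cH^i$, rather than passing to the minimal extension of a VHS. You discard the smaller-support summands of $\cH^s$ by a rank-one torsion-freeness count rather than by $p(\cM')>-n$. One point needs firmer justification: $\omega_{Y\setminus\Delta}(-s)$ must be shown to be a summand of $\cH^s$ as a Hodge module, since Koll\'ar's sheaf-level isomorphism alone does not give this. Either the trace map plus semisimplicity, or the paper's smooth-locus computation with hard Lefschetz, would supply it.
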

\begin{remark}
    \begin{enumerate}
        The above result is in particular valid with $D = \Delta = 0$, in which case it is due to Koll\'ar \cite{Kollar_Higher_direct_images_of_dualizing_sheaves_I, Kollar_Higher_direct_images_of_dualizing_sheaves_II}.
    \end{enumerate}
\end{remark}
\begin{proof}
    Let $n \coloneqq \dim(X)$, and $m \coloneqq n - s =  \dim(Y)$. Furthermore, let $U \coloneqq X \setminus D$ and $V \coloneqq Y \setminus \Delta$ (note that $f^{-1}(V) = U$ by assumption). We will denote by $i$ the inclusion of $U$ into $X$, by $j$ the inclusion of $V$ into $Y$, and by $g \colon U \to V$ the morphism induced by $f$.
    
    By the decomposition theorem for pure Hodge modules (which follows from \cite[Théorème 1]{Saito_Modules_de_Hodge_polarisables} by \cite[Théorème 1.5]{Deligne_Theoreme_de_Lefschetz_et_criteres_de_degenerescence_de_suite_spectrales}), we have an isomorphism in the derived category of filtered $\cD_V$--modules \[ g_+\omega_U \cong \bigoplus_{i \in \bZ}  \cH^ig_+\omega_U[-i] \] (each $\cH^ig_+\omega_U$ acquires a filtration by strictness, see \cite[Théorème 1]{Saito_Modules_de_Hodge_polarisables}). Applying $j_+$ and $\Gr^F_{-n}$ to this decomposition gives an isomorphism \[ \Gr^F_{-n}(f_+i_+\omega_U) \cong \bigoplus_{i \in \bZ} \Gr^F_{-n}(j_+\cH^i(g_+\omega_U))[-i]. \] 
    Since all complexes $\cM^{\bullet}$ above satisfy $\Gr_i^F(\cM^{\bullet})$ = 0 for all $i < -n$ by construction and \autoref{preparation_lemma_HM}.\autoref{itm:filtration_on_pushforward}, we have that $\Gr_{-n}^F(j_+\cH^i(g_+\omega_U)) = F_{-n}(j_+\cH^i(g_+\omega_U))$ is concentrated in degree zero by \autoref{preparation_lemma_HM}.\autoref{itm:exactness_open_immersion} for all $i \in \bZ$, and that $\Gr^F_{-n}(f_+i_+\omega_U) = \mathrm{Gr}_{-n}^F(\mathrm{DR}_Y(f_+i_+\omega_U))$ (see e.g. \cite[Page 5]{Popa_Kodaira_Saito_vanishing_and_applications} for the definition of the induced filtration on the de Rham complex). The strictness property in \cite[Theorem 2.14]{Saito_Mixed_Hodge_modules} therefore gives an isomorphism \[ \mathrm{Gr}_{-n}^F(\mathrm{DR}_Y(f_+i_+\omega_U)) \cong Rf_*\mathrm{Gr}_{-n}^F(\mathrm{DR}_X(i_+\omega_U)) = Rf_*(F_{-n}(i_+\omega_U)). \] Given that $F_{-n}(i_+\omega_U) \cong \omega_X(D)$ by \autoref{preparation_lemma_HM}.\autoref{itm:case_of_trivial_HM}, we obtained the formula \[ Rf_*\omega_X(D) \cong \bigoplus_{i \in \bZ} \: F_{-n}(j_+\cH^i(g_+\omega_U))[-i]. \] In particular, we have proven that $Rf_*\omega_X(D) \cong \bigoplus_{i \in \bZ} R^if_*\omega_X(D)[-i]$. Let us move to the torsion--freeness result: by the above, we have to show that each $F_{-n}(j_+\cH^i(g_+\omega_U))$ is torsion--free. All components $\cM$ of $\cH^ig_+\omega_U$ with strict support not equal to $V$ satisfy that $p(\cM) > p(\cH^ig_+\omega_U)$ by \cite[Proposition 2.6]{Saito_On_Kollar_conjecture}, so in particular $F_{-n}(\cM) = 0$. If $\cN$ denotes the component of $\cH^i(g_+\omega_U)$ with strict support on $V$, we then deduce from \autoref{preparation_lemma_HM}.\autoref{itm:filtration_on_pushforward} that $F_{-n}(j_+\cH^i(g_+\omega_U)) \cong F_{-n}(j_+\cN)$. Since $\cN$ underlies a pure Hodge module, we can write it as the minimal extension of some vector bundle $\cV$ with a connection on some open $V'$ with associated open immersion $j' \colon V' \hookrightarrow V$ (see \cite[Theorem 3.21]{Saito_Mixed_Hodge_modules}), so in particular $\cN \inc j'_*\cV$. Hence, $F_{-n}(j_+\cN) \inc j_+\cN = j_*\cN \inc j_*j'_*\cV$, so it is torsion--free.
    
    Assume that $Y$ is smooth, that $\Delta$ is snc and that $f_*\cO_X = \cO_Y$. We are left to show that $R^sf_*\omega_X(D) \cong \omega_Y(\Delta)$, or equivalently that $F_{-n}(j_+\cH^s(g_+\omega_U)) \cong \omega_Y(\Delta)$. If we let $h \colon \ttilde{U} \to \ttilde{V}$ denote the restriction of $g$ to its smooth locus, then it is straightforward to see that $\cH^{-s}(h_+\omega_{\ttilde{U}}) \cong \omega_{\ttilde{V}}$ as filtered $\cD_{V'}$--modules (we are using the assumption that $f$ has connected fibers). Since $\cH^{-s}g_+\omega_U$ underlies a pure Hodge module, we obtain by \cite[Theorem 3.21]{Saito_Mixed_Hodge_modules} that its component of strict support on $V$ is precisely $\omega_V$. Given that there is an isomorphism $\cH^{-s}g_+\omega_U \to (\cH^sg_+\omega_U)(s)$ by \cite[Théorème 1]{Saito_Modules_de_Hodge_polarisables}, we obtain that the component of $\cH^sg_+\omega_U$ of strict support on $V$ is precisely $\omega_V(-s)$. We can therefore write $\cH^s(g_+\omega_U) \cong \omega_V(-s) \oplus \cM'$, where $\cM'$ is not supported everywhere and satisfies $p(\cM') > p(\omega_V(-s)) = -n$ (recall our previous discussion), so combining points \autoref{itm:filtration_on_pushforward} and \autoref{itm:case_of_trivial_HM} of \autoref{preparation_lemma_HM} gives \[ F_{-n}(j_+\cH^s(g_+\omega_U)) \cong F_{-n}(j_+\omega_V(-s)) = F_{-m}(j_+\omega_V) = \omega_Y(\Delta). \qedhere \]
\end{proof}

\begin{remark}
    After discussing with Fujino, we realized that the torsion-freeness can be deduced from applying \cite[Theorem 6.3.(i)]{Fujino_Fundamental_theorems_for_the_log_MMP} to $B = D - \varepsilon f^*\Delta$ with $0 < \varepsilon \ll 1$ and $L = K_X + D$. In the case where $(Y, \Delta)$ is log smooth, it can be proven in arbitrary characteristic that there is a natural surjective map $R^jf_*\omega_X(D) \to \omega_Y(\Delta)$ (for example reduce to positive characteristic, and use methods from test ideals/Cartier modules), so we can deduce from the previously obtained torsion-freeness that this map is an isomorphism if $f_*\cO_X = \cO_Y$.
\end{remark}
\section{Varieties of maximal Albanese dimension}\label{sec:maxAlb}

In this section we will prove \autoref{main_thm_max_alb_dim}. To this aim, fix a smooth variety $V$ of maximal Albanese dimension with good compactification $(X, D)$, and let us assume that $\overline{P}_2(V) = 1$. As in \autoref{semi-alb}, let $A(V)$ be the Albanese variety of $V$ and $(Z,\Delta)$ its standard compactification. We set $n \coloneqq \dim(V)$ and $r \coloneqq \overline{q}(V) - q(X) \geq 0$.

The main part of the proof will be to show that $\alb_V \colon V \to \Alb(V)$ is birational, and we will show this fact by induction on $r$ (the WWPB property will come afterwards). Note that when $r = 0$, then $X$ has in fact maximal Albanese dimension too, so the result follows from \cite{Chen_Hacon_Characterization_of_abelian_varieties} (see also \cite[Theorem 4.1]{Pareschi_Basic_results_on_irr_vars_via_FM_methods}, \cite[Theorem 3.1]{Jiang_An_effective_version_of_a_thm_of_Kawamata_on_the_Albanese_map} or \cite[Theorem C]{Baudin_Effective_Characterization_of_ordinary_abelian_varieties}). Thus, we will assume from now on that $r > 0$ in order to prove birationality.

Let us write $\ker(\Alb(V) \to \Alb(X)) = \bG_{m, 1} \times \dots \times \bG_{m, r}$. Given $1 \leq i \leq r$, we can fix a $\bP^1$--fibration $(Z, \Delta) \to (T_i, \Delta_{T_i})$ over $\Alb(X)$ such that $G_i \coloneqq T_i \setminus \Delta_{T_i} = \Alb(V)/\bG_{m, i}$. In order to ease notation, and because it should not create confusion, we will write $(T, \Delta_T): = (T_i, \Delta_{T_i})$. Up to blowing up $X$, we have the following commutative diagram:
\begin{equation}\label{eq:setupdiag}
    \begin{tikzcd}
        V \arrow[d, "\alb_V"] \arrow[rr, hook]     &  & X \arrow[d, "g"'] \arrow[ddd, "a", bend left=49] \arrow[dd, "f", bend left] \\
        \Alb(V) \arrow[rr, hook] \arrow[d, two heads] &  & Z \arrow[d, "\pi"'] \\
        \Alb(V)/\bG_{m, 1} \arrow[rr, hook]                &  & T \arrow[d, "\theta"'] \\
        &  & \Alb(X).   
    \end{tikzcd}
\end{equation}

Our approach to show birationality will be to show that $g \colon X \to Z$ is étale over $\Alb(V)$ (up to $g$--exceptional components). This will conclude the proof, since a finite étale cover of a semi-abelian variety is again semi-abelian. We refer the reader to the proof of \autoref{birational_mad_case} for the precise details. 

\begin{lemma}\label{dominance_in_mad_case}
    The morphism $g \colon X \to Z$ is surjective (equivalently, $\alb_V$ is dominant).
\end{lemma}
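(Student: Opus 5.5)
The plan is to show directly that $\overline{q}(V) = n$. Since $\dim \alb_V(V) = n$ by maximal Albanese dimension and $\dim \Alb(V) = \overline{q}(V)$, this is equivalent to dominance of $\alb_V$, hence to surjectivity of $g$ (as $g$ is proper). The idea is to use logarithmic $n$-forms pulled back from $\Alb(V)$ to show that $\overline{P}_1(V) = 1$ forces the Gauss map of $\alb_V(V)$ to be constant. That would put $\alb_V(V)$ inside a translate of a proper subgroup, which contradicts the universal property.

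\textbf{Step 1: $\overline{P}_1(V) = 1$.} By \autoref{standard_compactification}, $W \coloneqq H^0(Z, \Omega^1_Z(\log \Delta))$ has dimension $\overline{q}(V)$ and gives a global frame of $\Omega^1_Z(\log\Delta)$. Since $g^{-1}(\Delta) \subseteq D$ (after the blow-ups made in \autoref{eq:setupdiag}), pullback gives a map $g^* \colon \wedge^n W \to H^0(X, \omega_X(D))$.

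At a general point $x \in V$, $\alb_V$ is immersive onto its image because $\dim \alb_V(V) = n$. Hence $W \to T^*_xV$ is surjective, and some $n$ elements of $W$ have wedge nonvanishing at $x$. This gives $\overline{P}_1(V) \geq 1$. The square of a nonzero section of $\omega_X(D)$ is a nonzero section of $\omega_X(D)^{\otimes 2}$, and more generally multiplying by a fixed nonzero section embeds $H^0(\omega_X(D))$ into $H^0(\omega_X(D)^{\otimes 2})$. So $\overline{P}_1(V) \le \overline{P}_2(V) = 1$, and therefore $\overline{P}_1(V) = 1$.

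\textbf{Step 2: constant Gauss map.} Fix a generator $\omega$ of $H^0(X,\omega_X(D))$. Choose a basis $w_1,\dots,w_q$ of $W$, where $q = \overline{q}(V)$. Then for every $n$-subset $I$ we have $g^*(w_I) = c_I\,\omega$ for constants $c_I \in \bC$, not all zero. At a general point $x$, the values $w_I(x) \in \wedge^n T^*_x V$ are, up to the common nonzero scalar $\omega(x)$, the Plücker coordinates of the kernel $K_x \subseteq W$ of $W \to T^*_xV$. Equivalently, they are the Plücker coordinates of the tangent space of $Y \coloneqq \alb_V(V)$ at $\alb_V(x)$, viewed inside $\Lie(\Alb(V)) = W^\vee$ via invariant forms. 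Since $(c_I)$ is constant, the Gauss map of $Y$ (with respect to the translation-invariant trivialization of the tangent bundle) is constant, equal to some $L \subseteq \Lie(\Alb(V))$ with $\dim L = n$.

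\textbf{Step 3: contradiction if $q > n$.} Since the tangent space of $Y$ is everywhere the translate of $L$, the set $Y - y$ is contained in the image $\exp(L)$ of the analytic subgroup for a general $y \in Y$. The algebraic subgroup $H$ generated by $Y - y$ is the image of a morphism from some product $Y^{k}$ (with signs), so it is a constructible set contained in $\exp(L)$. Its tangent space is therefore contained in $L$, giving $\dim H \le n$. But by the universal property, $V$ generates $\Alb(V)$ as an algebraic group, so $H = \Alb(V)$. Hence $q = \dim \Alb(V) \le n$, and with $q \geq \dim \alb_V(V) = n$ we get $q = n$ and dominance.

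I expect the main obstacle to be Step 3, namely making the passage from ``constant Gauss map'' to ``contained in a translate of an algebraic subgroup of dimension $n$'' rigorous. If it becomes delicate, the first thing I would try is the cleaner algebraic version. For every invariant vector field $\xi \in L^{\perp\perp}$ the translates $Y + t\xi$ are tangent to $Y$. So the stabilizer of the closure $\overline{Y}$ in $\Alb(V)$, which is an algebraic subgroup, has Lie algebra containing $L$. Hence $\overline{Y}$ is a coset of an $n$-dimensional subgroup, which again contradicts the universal property when $q > n$.
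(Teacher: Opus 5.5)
Your argument is correct, but it finishes by a different route from the paper's. The paper observes that the $\dim Z$ pulled-back forms generically generate $\Omega^1_X(\log D)$. It then uses $h^0(X,\Omega^1_X(\log D))=\dim Z$, which comes from Fujino's construction: pullback identifies $W$ with $H^0(X,\Omega^1_X(\log D))$. Finally it invokes a lemma of Zhang on generically globally generated sheaves, which together with $\overline{P}_2(V)=1$ forces the number of sections to equal the rank, i.e.\ $\dim Z=\dim X$.

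Your Steps 1--2 are essentially a hands-on proof of that lemma in this situation. The minors $g^*w_I$ determine the kernel $K_x\subseteq W$ through its Pl\"ucker coordinates, so their proportionality to a single section forces $K_x=K$ to be constant.

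Your Step 3 then replaces the injectivity of $W\to H^0(X,\Omega^1_X(\log D))$ by the fact that $V$ generates $\Alb(V)$. This is valid but heavier than needed. Once $K_x=K$ is constant, every $w\in K$ has $g^*w$ vanishing at a general point of $V$, hence $g^*w=0$. By that injectivity $K=0$, so $q=n$ immediately, with no foliation or algebraic-subgroup argument.

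If you keep Step 3, two details need care:
\begin{enumerate}
\item Run the leaf argument on the connected manifold $Y^{\mathrm{sm}}$ rather than on $Y$, since the subgroup $\exp(L)$ need not be closed.
\item Justify $\mathrm{Lie}(H)\subseteq L$. One way: $\exp(\bC v)\subseteq\exp(L)$ would force $\bC v$ to be covered by the countably many translates $L+\lambda$ with $\lambda\in\ker(\exp)$. That is impossible when $v\notin L$, since each translate meets $\bC v$ in at most one point.
\end{enumerate}
Your stabilizer variant also works.

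Your route is self-contained and makes the geometry transparent: $\overline{P}_1(V)=1$ forces a constant Gauss map, so the image is a coset of an $n$-dimensional subgroup. The paper's route is a two-line reduction to a known lemma.
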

\begin{proof}
    Since $g \colon (X, D) \to (Z, \Delta)$ is generically finite (onto its image), the pullback morphism $g^*\Omega^1_Z(\log \Delta) \to \Omega^1_X(\log D)$ is generically surjective. By triviality of $\Omega^1_Z(\log \Delta)$, this shows that $\Omega^1_X(\log D)$ is generically generated by its global sections. Given that $h^0(X, \Omega^1_X(\log D)) = \dim(Z)$ by construction (\cite[Section 3]{Fujino_On_quasi_Albanese_maps}), we deduce by \cite[Lemma 4.2]{Zhang_Abundance_for_non_uniruled_3_folds_with_non_trivial_albanese_in_pos_char} and the assumption $\overline{P}_2(V) = 1$ that $\dim(Z) = \rk(\Omega^1_X(\log D)) = \dim(X)$.
\end{proof}
\begin{remark}\label{firstP}
    Observe that the maximal Albanese dimension assumption also ensures that  $\overline{P}_1(V) = 1$, since pulling back log differential forms and using \autoref{dominance_in_mad_case} gives an inclusion $\cO_Z \cong \omega_Z(\Delta) \hookrightarrow g_*\omega_X(D)$.
\end{remark}

The following lemma is the only one using the induction hypothesis.

\begin{lemma}\label{alb_of_X_fibration}
    We have that $f_*\cO_X = \cO_T$.
\end{lemma}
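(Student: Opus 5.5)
The plan is to take the Stein factorization of $f$ and use the induction hypothesis to show that the finite part is an isomorphism. Since $g$ is surjective by \autoref{dominance_in_mad_case} and $\pi$ is surjective, $f = \pi \circ g$ is surjective. Write $f = \nu \circ f'$ with $f' \colon X \to T'$ satisfying $f'_*\cO_X = \cO_{T'}$, and $\nu \colon T' \to T$ finite with $T'$ normal. It suffices to show that $\nu$ is birational: a finite birational morphism onto the normal variety $T$ is an isomorphism. Set $W^\circ \coloneqq \nu^{-1}(G_i)$; it is enough to show that $\nu|_{W^\circ} \colon W^\circ \to G_i$ is birational. Replacing $W^\circ$ by a resolution $W$ with good compactification $(\overline W, \Delta_W)$, and blowing up $X$ further, we may assume that $f'$ induces a morphism $\overline{f} \colon X \to \overline W$ with $\overline{f}^{-1}(\Delta_W) \subseteq D$. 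Note that $W$ has dimension $n-1$, is of maximal Albanese dimension (it is generically finite over $G_i$), and the general fibre of $V \to W$ is a curve mapping dominantly to $\bG_{m,i}$.

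The key step is to check that $\overline{P}_2(W) = 1$, so that the induction hypothesis applies. For the lower bound, pulling back the trivialized $\omega_{T}(\Delta_T)^{\otimes 2}$ along the generically finite dominant map $W \to T$ gives a nonzero section. For the upper bound, let $\eta \in H^0(X, \Omega^1_X(\log D))$ be the pullback under $g$ of $\dlog(x_i)$, the log form on $Z$ that is vertical for $Z \to T$. It restricts nontrivially to the general fibre of $\overline{f}$, because that fibre maps dominantly to $\bG_{m,i}$. Given $\sigma \in H^0(\overline W, \omega_{\overline W}(\Delta_W)^{\otimes m})$, the section $\overline{f}^*\sigma$ lies in $(\Omega^{n-1}_X(\log D))^{\otimes m}$ because $\overline f$ is a log morphism. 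Wedging each factor with $\eta$ gives an injection \[ H^0(\overline W, \omega_{\overline W}(\Delta_W)^{\otimes m}) \hookrightarrow H^0(X, \omega_X(D)^{\otimes m}). \] Taking $m = 2$, we get $\overline{P}_2(W) \le \overline{P}_2(V) = 1$.

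Next I would verify that $W$ falls under the induction hypothesis, i.e. that its invariant $\overline{q}(W) - q(\overline W)$ is smaller than $r$. This is the step I expect to be the main bookkeeping obstacle. The universal property gives homomorphisms $\Alb(V) \to \Alb(W) \to G_i$. The first is surjective because $V$ dominates $W$ and $W$ generates $\Alb(W)$, and the composite is the quotient by $\bG_{m,i}$. Hence $\Alb(W)$ has dimension $n-1$ and its torus part is dominated by that of $\Alb(V)$ with $\bG_{m,i}$ killed, so the torus rank is at most $r-1$. If the bookkeeping on $r$ gets awkward, I would instead run the induction on $\dim V$ (the case $r = 0$ being Chen--Hacon), since $\dim W = n-1$.

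By induction, $a_W \colon W \to \Alb(W)$ is birational. The kernel of $\Alb(W) \to G_i$ is the image of the connected group $\bG_{m,i}$, so it is connected. It is also finite, because $\Alb(W)$ and $G_i$ both have dimension $n-1$ and the map is surjective. A connected finite group is trivial, so $\Alb(W) \cong G_i$. Therefore $W \to G_i$ is birational, hence so is $\nu$, which proves $f_*\cO_X = \cO_T$.
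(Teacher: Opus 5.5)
Your proposal is correct and follows essentially the same route as the paper: a smooth model of the Stein factorization, the bound $\overline{P}_2(W)\le \overline{P}_2(V)=1$ obtained by wedging pulled-back log forms with a log $1$-form that is nonzero on the fibres of $Z\to T$, the identification $\Alb(W)\cong \Alb(V)/\bG_{m,i}$ via the universal property and connectedness of the kernel, and then the induction hypothesis.

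The one step you assert without justification is $\dim \Alb(W)=n-1$. Surjectivity of $\Alb(V)\to\Alb(W)\to G_i$ alone only gives $n-1\le \dim\Alb(W)\le n$. The paper rules out $\dim\Alb(W)=n$ by noting that $\Alb(V)\to\Alb(W)$ would then be an isogeny, making $V\to\Alb(W)$ generically finite, although it factors through the $(n-1)$-dimensional $W$. With your fallback induction on $\dim V$, this also follows directly from the birationality of $a_W$.
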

\begin{proof} 
    Let \[ \begin{tikzcd}
    X \arrow[rr, "\mu"] \arrow[rrrr, "f", bend left] &  & S \arrow[rr, "h"] &  & T.
    \end{tikzcd} \] be a smooth model of the Stein factorization of $f$, namely $S$ is smooth, $D_S \coloneqq h^{-1}(\Delta_T)$ is simple normal crossing and $h$ is generically finite and surjective (we are using \autoref{dominance_in_mad_case}). We aim to prove that $h$ is birational.
    
    Let $W$ be the complement of $D_S$ in $S$. Then $\dim W=n-1$ and $h_{|W}\colon W\rightarrow G$ is a dominant generically finite map to a semi--abelian variety of dimension $n-1$ (hence $W$ has maximal Albanese dimension). We are going to show that $\overline{q}(W) - q(S) = r - 1$, that $\overline{P}_2(W)=1$ and that $h_{|W}$ is the Albanese map of $W$. The induction hypothesis will then ensure that $h_{|W}$ birational, concluding the proof. 
    
    By the universal property of the Albanese morphism, we have a commutative diagram 
    \[ \begin{tikzcd}
        V \arrow[d] \arrow[rr, "\alb_V"]                        &                   & \Alb(V) \arrow[d, two heads, "\pi"] \arrow[ld, two heads] \\
        W \arrow[r, "\alb_W"] \arrow[rr, "h_{|W}"', bend right] & \Alb(W) \arrow[r] & G      
    \end{tikzcd} \] (note that $\Alb(V) \to  \Alb(W)$ is surjective, since $V \to W$ is dominant and $W$ generates its Albanese variety). In particular, we have that $\overline{q}(W) \in \{n, n - 1\}$. In the former case, we would have that the map $\Alb(V)\rightarrow \Alb(W)$ is generically finite, and hence so would be $V \to \Alb(W)$. This is impossible, since $V \to W$ is not, so we have proven that $\overline{q}(W)=n-1=\dim W$. In particular, the morphism $\Alb(W) \to G$ is finite (recall that $W$ dominates $G$). Since $\Alb(V) \to G$ has connected fibers and factors through $\Alb(W)$, we deduce that $\Alb(W) \to G$ is an isomorphism. This shows in particular that $\overline{q}(W)-q(S)=n-1-q(X)=r-1$, so we are left to show that $\overline{P}_2(W) = 1$.
    
    Choose a nowhere vanishing logarithmic $(n - 1)$--form $\psi_T \in H^0(T, \omega_T(\Delta_T))$ and pick $\eta \in H^0(Z, \Omega^1_Z(\log \Delta))$ such that $\pi^*(\psi_T) \wedge \eta$ is nowhere vanishing. We define a morphism $$\alpha\colon \omega_S(D_S)\rightarrow \mu_*\omega_X(D)$$ via $\psi \mapsto \mu^*\psi\wedge\eta$, where $\psi$ is a local logarithmic $(n - 1)$--form. Note that $\alpha$ is non--trivial since the image of $h^*(\psi_T)$ is non-zero, so it is automatically injective. In particular, $\overline{P}_2(W) \leq \overline{P}_2(D) = 1$, so the proof is complete.
\end{proof}

%%%%%%%%%%%%%%%%%%%%%%%%%%%%%%%%%%%%%%%%%%%%%%%%%%%%%%%%%%%%%%

\begin{notation}
   From now on, we let $H \coloneqq g^*(\Delta)_{\red} \inc D$, $H_T \coloneqq f^*(\Delta_T)_{\red} \inc H$ and $H' \coloneqq H - H_T$. We also set $\Delta'\coloneqq\Delta-\pi^*\Delta_T = \Delta_1' + \Delta_2'$.
\end{notation} Since $g$ is generically finite, we have the logarithmic ramification formula  (see \cite[\S 13]{iitaka1977logarithmic}):
\begin{equation}\label{logramification}
    K_X + D \sim g^*(K_Z + \Delta) + \overline{R_g} \sim \overline{R_g}. 
\end{equation} 
Let us also denote by $R_g$ the usual ramification divisor of $g$. The following results will be useful:

\begin{lemma}\label{basic_things}
    The following statements hold:
    \begin{enumerate}
        \item\label{only_one_section}  $h^0(X, \omega_X(D + \overline{R_g})) = 1$;
        \item\label{the_section_has_poles_on_the_wholeH} $h^0(X, \omega_X(H)) = 1$, and the non--zero global section has poles at each generic point of $H$ belonging to components not contracted by $g$; 

        \item\label{MLPT_lemma} for any irreducible divisor $\Gamma \not\subseteq H$, we have that $\Gamma \inc \overline{R_g}$ if and only if $\Gamma \inc D + R_g$;
        \item\label{exceptional} any $g$--exceptional prime divisor is in $H+\overline{R_g}$.    \end{enumerate}
\end{lemma}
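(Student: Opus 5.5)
The whole lemma rests on the log ramification formula \autoref{logramification}. Since $\Omega^1_Z(\log \Delta)$ is trivial by \autoref{standard_compactification}, we have $\omega_Z(\Delta) \cong \cO_Z$, and hence $\omega_X(D) \cong \cO_X(\overline{R_g})$ with $\overline{R_g}$ effective. I prove the four items in order, each by a local computation or a comparison of sections.

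For \autoref{only_one_section}, the isomorphism above gives
\[ \omega_X(D + \overline{R_g}) \cong \cO_X(2\overline{R_g}) \cong \omega_X(D)^{\otimes 2}, \]
so $h^0(X, \omega_X(D + \overline{R_g})) = \overline{P}_2(V) = 1$.

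For \autoref{the_section_has_poles_on_the_wholeH}, take a nowhere vanishing generator $\psi \in H^0(Z, \omega_Z(\Delta))$. Its pullback $g^*\psi$ is a top form with at most logarithmic poles along $g^{-1}(\Delta)_{\red} = H$, so $g^*\psi \in H^0(X, \omega_X(H))$. It is non--zero because $g$ is dominant (\autoref{dominance_in_mad_case}). For the upper bound, use $\omega_X(H) \inc \omega_X(D)$, so $h^0(X, \omega_X(H)) \leq \overline{P}_1(V)$. Moreover $\overline{P}_1(V) \leq \overline{P}_2(V) = 1$: multiplying by a non--zero section of $\omega_X(D)$ (which exists by \autoref{firstP}) embeds $H^0(\omega_X(D))$ into $H^0(\omega_X(D)^{\otimes 2})$. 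Hence $h^0(X, \omega_X(H)) = 1$, spanned by $g^*\psi$.

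The step needing care is the pole claim, which I handle locally. Let $E \inc H$ be a component not contracted by $g$. Then $g(E)$ is a component $\Delta_0$ of $\Delta$, and $E \to \Delta_0$ is generically finite and dominant. At a general point of $E$, choose coordinates on $Z$ with $\Delta = \{z = 0\}$ and write $\psi = \frac{dz}{z} \wedge \psi'$, where $\psi'|_{\Delta_0} = \Res_{\Delta_0}\psi$ is nowhere zero. Write $g^*z = u t^e$ with $E = \{t = 0\}$, $u$ a unit and $e \geq 1$. Then
\[ g^*\psi = \Big(e\frac{dt}{t} + \frac{du}{u}\Big) \wedge g^*\psi', \]
so the residue along $E$ is $e \cdot (g|_E)^*\Res_{\Delta_0}\psi$. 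This is non--zero because $E \to \Delta_0$ is generically finite and dominant. Hence $g^*\psi$ has a genuine pole along $E$.

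For \autoref{MLPT_lemma}, let $\Gamma \not\subseteq H$. Then $g(\Gamma) \not\subseteq \Delta$, so over a neighbourhood of the generic point of $\Gamma$ the divisor $g^*\Delta$ is trivial. The log ramification formula there reads $K_X + D = g^*K_Z + R_g + D$, so locally $\overline{R_g} = R_g + D$. Both are effective, so $\Gamma$ lies in one exactly when it lies in the other.

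For \autoref{exceptional}, let $E$ be a $g$--exceptional prime divisor with $E \not\subseteq H$. Since $g(E)$ has codimension at least $2$, the Jacobian of $g$ vanishes along $E$, so $E \inc R_g$. By \autoref{MLPT_lemma}, $E \inc \overline{R_g}$. Thus every $g$--exceptional prime divisor lies in $H + \overline{R_g}$.
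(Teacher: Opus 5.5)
Your proof is correct and follows essentially the same route as the paper. Item (1) comes from $\omega_Z(\Delta)\cong\cO_Z$ and the log ramification formula; item (2) from pulling back the nowhere-vanishing log volume form and bounding by $\overline{P}_1(V)\le\overline{P}_2(V)=1$; and item (4) from reducing to $E\subseteq R_g$ via item (3). The differences are minor. You write out the local computations that the paper only sketches or cites from \cite{MLPT2023}: the residue $e\cdot(g|_E)^*\mathrm{Res}_{\Delta_0}\psi$ for the poles, and $\overline{R_g}=R_g+D$ near the generic point of $\Gamma$. You also use the vanishing of the Jacobian along contracted divisors directly, where the paper argues through the relative cotangent sequence.
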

\begin{proof}
    It follows from \autoref{logramification} that 
    $$K_X+D+\overline{R_g}\sim 2(K_X+D),$$
    so \autoref{only_one_section} follows from the assumption that $\overline{P}_2(V)=1$. Now we prove \autoref{the_section_has_poles_on_the_wholeH} by mimicking the argument in the proof of \cite[Proposition 5.2]{MLPT2023}. The space $H^0(Z,\Omega_Z^1(\log \Delta))$ is generated by logarithmic forms $\eta_1,\ldots,\eta_n$ such that $\omega \coloneqq \eta_1\wedge\cdots\wedge\eta_n$ vanishes nowhere on $\Alb(V)$ and has poles on the whole $\Delta$. Since $g$ is surjective we have that $g^*\omega$ is a nonzero global logarithmic form on $X$ whose poles are contained in $H$ by construction. In particular we get
    $$0\neq h^0(X,\omega_X(H)))\leq h^0(X,\omega_X(D))=1,$$
    where the last equality is provided by \autoref{firstP}.
    Now, if $\Gamma$ is a component of $H$ not contracted by $g$, then a local computation shows that $g^*\omega$ has a pole along $\Gamma$, so \autoref{the_section_has_poles_on_the_wholeH} is proven. The proof of \autoref{MLPT_lemma} is a local computation, see \cite[Lemma 2.8]{MLPT2023}. For \autoref{exceptional}, let $E$ be a $g$--exceptional divisor that is not contained in $H$. Then we have that $g(E)\not\subseteq \Delta$. By \autoref{MLPT_lemma}, the statement is proven if we show that $E\subseteq R_g$. Let $p \in g(E)$ be a point, and set $E_p \coloneqq g^{-1}(p)$. We need to show that $E_p \inc R_g$. Consider the exact sequence of differential forms
    $$g^*\Omega^1_Z\rightarrow\Omega^1_X\rightarrow \Omega^1_{X/Z}\rightarrow 0.$$
    Given that $\Omega^1_{X/Z}|_{E_p} \cong \Omega^1_{E_p/p}$ has positive rank, the map $g^*\Omega^1_Z\rightarrow\Omega^1_X$ is not surjective at $E_p$. Taking determinants gives that $g^*\omega_Z\rightarrow \omega_X$ is not surjective at any point of $E_p$, i.e. $E_p \inc R_g$.
\end{proof}

Recall that our objective is to show that $g$ is étale over $\Alb(V)$ (up to $g$--exceptional components). In other words, we need to show $\overline{R_g}$ is as small as possible. Here is the first step towards this goal. 

\begin{proposition}\label{no_bad_component_dominates}
    Let $\Gamma$ be an irreducible component of $\overline{R_g}$ such that $\Gamma \not\inc H$. Then $\Gamma$ does not dominate $T$.
\end{proposition}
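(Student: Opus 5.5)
The plan is to argue by contradiction. Suppose some irreducible component $\Gamma$ of $\overline{R_g}$ with $\Gamma\not\subseteq H$ dominates $T$. I would restrict everything to a general fibre of $f$ and use $\overline{P}_2(V)=1$ to get a contradiction. By \autoref{alb_of_X_fibration} we have $f_*\cO_X=\cO_T$, so a general fibre $F$ of $f$ is a smooth connected projective curve. The map $g|_F\colon F\to \pi^{-1}(t)\cong\bP^1$ is finite and surjective by \autoref{dominance_in_mad_case}. Write $D_F\coloneqq D|_F$; it is reduced for general $F$. Restricting the logarithmic ramification formula \autoref{logramification} to $F$ gives
\[ K_F+D_F\sim (g|_F)^*(K_{\bP^1}+0+\infty)+\overline{R_g}|_F\sim \overline{R_g}|_F. \]
If $\Gamma$ dominates $T$, then $\Gamma\cap F\neq\emptyset$. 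Moreover, since $\Gamma\not\subseteq H$, its points on $F$ lie over $\bP^1\setminus\{0,\infty\}$. Hence $\deg(K_F+D_F)>0$, so the pair $(F,D_F)$ is of log general type.

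The second step converts this fibrewise positivity into a global statement. Since $\omega_T(\Delta_T)\cong\cO_T$ (as for the standard compactification in \autoref{standard_compactification}) and $H_T=f^*(\Delta_T)_{\red}$, we can write
\[ \omega_X(D)\cong f^*\omega_T(\Delta_T)\otimes\omega_{X/T}(D-H_T)\otimes\cO_X(E), \]
where $E$ is supported on $f$-vertical divisors. The horizontal part of $D$ is therefore $D-H_T$ up to vertical corrections. Consequently
\[ \overline{P}_2(V)\geq h^0\bigl(T,f_*(\omega_{X/T}(D-H_T)^{\otimes 2})\bigr). \]
The sheaf $f_*(\omega_{X/T}(D-H_T)^{\otimes2})$ has rank $h^0(F,2(K_F+D_F))$. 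By Riemann--Roch on $F$ this rank is at least $2$ as soon as $\deg(K_F+D_F)>0$; the small-genus cases have to be checked separately, and they are easy. The task then becomes showing that this sheaf has at least two independent global sections.

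The third step, which I expect to be the main obstacle, is producing these sections, since weak positivity alone does not give global sections on $T$. I would proceed Chen--Hacon style and push forward to $\Alb(X)$ via $\theta\circ f=a$. By \autoref{lem:GVmorphism}, $a_*\omega_X(D)$ is GV, $\cO_{\Alb(X)}$ is an isolated point of $V^0$, and there is a unipotent summand. I would then use \autoref{hodge_module_lemma} for $f$ (and for $\theta$), with $\Delta_T$ and $\Delta$ as boundaries, to control the pushforwards of $\omega_X(D)$ to $T$. Concretely, $R^1f_*\omega_X(D)\cong\omega_T(\Delta_T)\cong\cO_T$ and $f_*\omega_X(D)$ is torsion-free of rank $h^0(F,K_F+D_F)$. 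The aim is to show that $f_*\omega_X(D)$ contains two independent sections twisted by $\alpha$ and $\alpha^{-1}$ for some $\alpha\in\Pic^0$. This would follow if the rank exceeds $1$, since in that case $V^0(a_*\omega_X(D))$ must contain a positive-dimensional component through the origin, unless the sheaf splits off trivial summands. Multiplying such a pair of sections yields two independent sections of $\omega_X(D)^{\otimes2}$, contradicting $\overline{P}_2(V)=1$, or contradicting item \autoref{lem:noZ} of \autoref{lem:GVmorphism}.

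If the rank of $f_*\omega_X(D)$ is only $1$, i.e.\ $h^0(F,K_F+D_F)=1$, I would instead use \autoref{basic_things}. By item \autoref{the_section_has_poles_on_the_wholeH}, the unique section of $\omega_X(H)$ has poles along the non-contracted components of $H$. Hence on $F$ the divisor $\overline{R_g}|_F$ is the divisor of a section of $K_F+D_F$ which, on the other hand, is forced to be supported over $\{0,\infty\}$. This is incompatible with $\Gamma\cap F\neq\emptyset$. The most delicate point I foresee is matching the vertical correction $E$ and the multiplicities of $f^*\Delta_T$ in this comparison.
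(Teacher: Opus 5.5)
Your Step~1 is correct, but it only restates the claim. For general $F$ one has $\deg(K_F+D_F)=\deg\overline{R_g}|_F$ and $\deg D_F\geq 2$, since $g|_F$ hits both $0$ and $\infty$. So a component $\Gamma\not\subseteq H$ of $\overline{R_g}$ dominating $T$ is the same thing as $(F,D_F)\not\cong(\bP^1,0+\infty)$, which is exactly what \autoref{log-erale} records. Everything therefore rests on turning fibrewise log general type into a second global section, and Step~3 does not do this.

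\begin{itemize}
\item The assertion $R^1f_*\omega_X(D)\cong\omega_T(\Delta_T)$ misapplies \autoref{hodge_module_lemma}, which requires the boundary on $X$ to be $f^{-1}(\Delta_T)$. Here $D|_F\neq 0$, so $R^1f_*\omega_X(D)$ has generic rank $h^1(F,\omega_F(D_F))=0$ and is torsion. The isomorphism holds for $\omega_X(H_T)$, not $\omega_X(D)$.
\item More seriously, nothing supports the claim that rank at least $2$ forces a positive-dimensional component of $V^0(a_*\omega_X(D))$ through the origin ``unless trivial summands split off''. For instance, $\cO_{A(X)}\oplus\alpha$ with $\alpha\in\Pic^0$ non-trivial torsion is a GV sheaf of rank $2$ with $h^0=1$ and finite $V^0$. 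Rank counting never produces the symmetric family $Z,-Z$ that the multiplication trick needs. Moreover, \autoref{lem:GVmorphism}.\autoref{lem:noZ} already forbids such a family, so the entire burden is to show that a dominating $\Gamma$ would create one, and you give no mechanism for this.
\item Your rank-one fallback is vacuous: $h^0(F,K_F+D_F)=p-1+\deg D_F$ equals $1$ only for $(\bP^1,0+\infty)$, which Step~1 already excludes.
\item In Step~2, $E=H_T-f^*\Delta_T\leq 0$, so your inequality for $\overline{P}_2(V)$ runs the wrong way whenever $f^*\Delta_T$ is non-reduced.
\end{itemize}

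The missing idea is to build the second section as a log form with a pole along $\Gamma$, not as a product of twisted sections. The paper pushes forward by $f$ the residue sequence $0\to\omega_X(H_T)\to\omega_X(H+\Gamma)\to\omega_{H'+\Gamma}(H_T|_{H'+\Gamma})\to 0$.

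\begin{enumerate}
\item The three horizontal divisors $H_1'$, $H_2'$ (over the two boundary sections $\Delta_1'$, $\Delta_2'$ of $Z\to T$) and $\Gamma$ are each generically finite over $(T,\Delta_T)$, whose log canonical bundle is trivial. Pulling back the log volume form therefore gives $h^0(T,f_*\omega_{H'+\Gamma}(H_T|_{H'+\Gamma}))\geq 3$.
\item The connecting map lands in $R^1f_*\omega_X(H_T)\cong\omega_T(\Delta_T)\cong\cO_T$. Here \autoref{hodge_module_lemma} does apply, because $H_T=f^{-1}(\Delta_T)$. So this map costs at most one section.
\item To lift the remaining two sections one needs $H^1(T,f_*\omega_X(H_T))=0$. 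Fujino's relative vanishing for $\theta$ (as $\Delta_T$ is $\theta$-ample) identifies this group with $H^1(A(X),a_*\omega_X(H_T))$. That group vanishes because the sheaf is GV and has no global sections, by \autoref{basic_things}.\autoref{the_section_has_poles_on_the_wholeH}: the unique section of $\omega_X(H)$ has poles along $H_1'$ and $H_2'$.
\end{enumerate}

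Hence $h^0(X,\omega_X(H+\Gamma))\geq 2$. This contradicts \autoref{basic_things}.\autoref{only_one_section}, i.e.\ $h^0(X,\omega_X(D+\overline{R_g}))=\overline{P}_2(V)=1$.
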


\begin{proof}
    We will proceed by contradiction, so assume that there exists some component $\Gamma \inc \overline{R_g}$ not in $H$ dominating $T$. Consider the exact sequence \[ \begin{tikzcd}
        0 \arrow[r] & \omega_X(H_T) \arrow[r]  & \omega_X(H + \Gamma) \arrow[r]  & \omega_{H' + \Gamma}(H_T|_{H' + \Gamma}) \arrow[r] &  0.
    \end{tikzcd} \]
    Pushing by $f$ and using \autoref{hodge_module_lemma} (together with \autoref{alb_of_X_fibration}) gives an exact sequence 
    \[ \begin{tikzcd}
        0 \arrow[r] &   f_*\omega_X(H_T) \arrow[r]   & f_*\omega_X(H + \Gamma) \arrow[r]  & f_*\omega_{H' + \Gamma}(H_T|_{H' + \Gamma}) \arrow[r]  & \omega_T(\Delta_T), 
    \end{tikzcd} \]
    which we split into two exact sequences 
    \begin{equation}\label{firstequationnondominance} \begin{tikzcd}
        0 \arrow[r] & f_*\omega_X(H_T) \arrow[r] & f_*\omega_X(H + \Gamma) \arrow[r] & \cM \arrow[r] & 0,
    \end{tikzcd} \end{equation} and 
    \begin{equation}\label{secondequationnondominance} \begin{tikzcd}
        0 \arrow[r] & \cM \arrow[r] & f_*\omega_{H' + \Gamma}(H_T|_{H' + \Gamma}) \arrow[r] & \cN \arrow[r] & 0
    \end{tikzcd} \end{equation} with $\cN \inc \omega_T(\Delta_T) \cong \cO_T$. Let $H_1'$ and $H_2'$ be components of $H'$ dominating $\Delta_1'$ and $\Delta_2'$ respectively, and let $\Lambda = H' - H_1' - H_2'$. Note that applying Grothendieck duality to the inclusion \[ \cO_{H' + \Gamma}(H_T|_{H' + \Gamma}) \inc \cO_{H_1'}(H_T|_{H_1'}) \oplus \cO_{H_2'}(H_T|_{H_2'}) \oplus \cO_{\Gamma}(H_T|_{\Gamma}) \oplus \cO_{\Lambda}(H_T|_{\Lambda}) \] and \stacksproj{0A7U} to its cokernel gives an inclusion
    \[ \omega_{H_1'}(H_T|_{H_1'}) \oplus \omega_{H_2'}(H_T|_{H_2'}) \oplus \omega_{\Gamma}(H_T|_{\Gamma}) \oplus \omega_{\Lambda}(H_T|_{\Lambda}) \inc \omega_{H' + \Gamma}(H_T|_{H' + \Gamma}). \] Since each map $(H_i', H_T|_{H_i'}) \to (T, \Delta_T)$ and $(\Gamma, H_T|_{\Gamma}) \to (T, \Delta_T)$ are generically finite by assumption, we deduce by the same pullback argument as in \autoref{firstP} that $h^0(T, f_*\omega_{H' + \Gamma}(H_T)) \geq 3$. In particular, \autoref{secondequationnondominance} gives $h^0(T, \cM) \geq 2$, since $\cN \inc \cO_T$.

   If we knew that $H^1(T, f_*\omega_X(H_T)) = 0$, then we would deduce that $h^0(X, \omega_X(H + T)) \geq 2$ by \autoref{firstequationnondominance}, contradicting \autoref{basic_things}.\autoref{only_one_section} and thereby concluding the proof. Let us therefore show this vanishing. Given that $H_T = f^*(\Delta_T)_{\red}$, there exists $0 < \varepsilon \ll 1$, such that $B \coloneqq H_T - \varepsilon f^*\Delta_T$ is effective. Since $\Delta_T$ is $\theta$--ample, we obtain by \cite[Theorem 6.3.(ii)]{Fujino_Fundamental_theorems_for_the_log_MMP} that \[ R^j\theta_*(f_*\omega_X(H_T)) = 0 \] for all $j > 0$, so in particular $H^1(T, f_*\omega_X(H_T)) \cong H^1(\Alb(X), a_*\omega_X(H_T))$. Given that $H^0(\Alb(X), a_*\omega_X(H_T)) = 0$ by \autoref{basic_things}.\autoref{the_section_has_poles_on_the_wholeH} and $a_*\omega_X(H_T)$ is a GV--sheaf, we obtain that also $H^1(\Alb(X), a_*\omega_X(H_T)) = 0$.
\end{proof}

\begin{corollary}\label{log-erale}
    The restriction of $g\colon (X, D) \to (Z, \Delta)$ to a general fiber $F$ of $X \to T$ is log étale. In particular, $(F, D|_F) \cong (\bP^1, 0 + \infty)$.
\end{corollary}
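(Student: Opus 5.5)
We restrict the log ramification formula \autoref{logramification} to a general fiber $F$ of $f \colon X \to T$. By \autoref{alb_of_X_fibration}, $f$ has connected fibers, so a general fiber $F$ is a smooth connected projective curve. Since $D$ is snc and $F$ is general, $D|_F$ is a reduced divisor and $F$ meets $D$ transversally. The corresponding general fiber of $\pi \colon Z \to T$ is $(\bP^1, 0+\infty)$, because $(Z,\Delta) \to (T,\Delta_T)$ is the $\bP^1$-fibration attached to the factor $\bG_{m,1}$ over $\Alb(X)$. By \autoref{dominance_in_mad_case}, $g$ is surjective and generically finite, so $g|_F \colon F \to \bP^1$ is a finite surjective morphism of curves.

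The key step is to show that $\overline{R_g}|_F = 0$.

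\begin{itemize}
\item The components of $\overline{R_g}$ not contained in $H$ do not dominate $T$ by \autoref{no_bad_component_dominates}, so they miss a general fiber $F$.
\item Components of $\overline{R_g}$ contained in $H_T$ do not dominate $T$, since they map into $\Delta_T$.
\item Components contained in $H' = H - H_T$ are reduced boundary components that map to $\Delta'$. Along such a component $\Gamma$ one has $K_X + D$ and $g^*(K_Z+\Delta)$ computed with $\Gamma$ and $g^*\Gamma'$ both appearing with coefficient one in the log sense. So $\overline{R_g}$ has no component along $\Gamma$. The log ramification divisor never contains boundary components lying over the boundary: the log pullback of a log form along $\Gamma$ has exactly a simple pole.
\end{itemize}

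More precisely, the local computation behind \autoref{basic_things}.\autoref{MLPT_lemma} (\cite[Lemma 2.8]{MLPT2023}) shows that $\overline{R_g}$ is supported on the closure of $(D+R_g)\setminus H$ together with $g$-exceptional divisors. The $g$-exceptional divisors do not dominate $T$, since they have image of dimension $\le n-2$ while $T$ has dimension $n-1$. The other components are handled as in the list above. I expect this bookkeeping, ensuring that nothing in $\overline{R_g}$ dominates $T$, to be the only delicate point; everything else is formal.

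Adjunction on the general fiber then gives
\[ K_F + D|_F = (K_X + D)|_F \sim (g^*(K_Z+\Delta) + \overline{R_g})|_F = (g|_F)^*(K_{\bP^1} + 0 + \infty). \]
So the log ramification divisor of $g|_F \colon (F, D|_F) \to (\bP^1, 0+\infty)$ is zero, i.e. $g|_F$ is log étale.

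It remains to identify $(F, D|_F)$. Taking degrees in the log Riemann–Hurwitz formula gives $2g(F) - 2 + \deg D|_F = \deg(g|_F)\cdot 0 = 0$. Moreover $D|_F \neq 0$: otherwise $D|_F=0$ forces $g(F)=1$, but then $F$ would map onto $\bP^1$ with no branching away from $0,\infty$ and only log-unramified points over them, all of which lie in $D|_F=\emptyset$, which is impossible. Hence $g(F) = 0$ and $\deg D|_F = 2$, so $(F, D|_F) \cong (\bP^1, 0+\infty)$. Equivalently, $F\setminus D$ is a connected étale cover of $\bG_m$, hence again $\bG_m$.
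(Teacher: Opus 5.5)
Your proposal is correct and follows essentially the paper's argument. \autoref{no_bad_component_dominates} removes all log ramification away from the boundary on a general fiber. One then concludes either by log Riemann--Hurwitz, which the paper mentions as an alternative, or, as the paper does, by noting that a connected finite \'etale cover of $\mathbb{G}_m$ is $z \mapsto z^m$. Your local check that non-exceptional components of $H'$ carry no log ramification is extra work: the paper gets this for free from that classification, which forces tame ramification over $0$ and $\infty$.
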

\begin{proof}
    By the logarithmic ramification formula and \autoref{no_bad_component_dominates}, we obtain that the restriction $(F, D|_F) \to (\bP^1, \Delta|_{\bP^1})$ of $g$ to a general fiber of $f$ is étale over $\bG_m = \bP^1 \setminus \Delta|_{\bP^1}$. However, any finite étale cover of $\bG_m$ is isomorphic to $\bG_m$ (the map being given by taking $m$'th power for some $m > 0$), so we automatically deduce that $(F, D|_F) \cong (\bP^1, 0 + \infty)$ (one could also have done a Riemann--Hurwitz computation and obtain the same conclusion).
\end{proof}

\iffalse
\begin{proof}
    \JB{You had written a proof, but I don't think we need to prove this fact (I commented your proof though). What do you think?}
    Let $\Lambda$ be the union of all the ireducible components of $\overline{R_g}$ not contained in $H$ \JB{We should put a $D$ and not an $H$ I think!!!! Going to the actual $H$ will be the purpose of the WWPB part of the argument!!}. Then, by \autoref{no_bad_component_dominates},
    we have that $f(\Lambda)\neq T$. Let us take $ t\in T\backslash f(\Lambda)$ general such that $F_t$, the fiber of $F$ over $t$, is smooth. Then $$(K_X+D+\overline{R_g})\cdot F_t=(K_X+H)\cdot F_t =g^*(K_Z+\Delta)\cdot F_t.$$ We deduce that the restriction of $g$ to $F_t$ is log-\'etale, since its log-ramification divisor is trivial.
\end{proof}

\fi

\begin{notation}
    Let \[ \begin{tikzcd}
    X \arrow[r, "\mu"] & \overline{X} \arrow[r, "\overline{g}"] & Z
    \end{tikzcd} \] denote the Stein factorization of $g$, and let $\overline{f} \coloneqq \pi \circ \overline{h} \colon \overline{X} \to T$. We denote by $\Lambda$ the union of all components of the usual ramification divisor $R_g$ of $g$ that are not contained in $H$ and that are not $g$--exceptional. We then set $U \coloneqq T \setminus (f(\Lambda) \cup \Delta_T) \inc T \setminus \Delta_T = G$. Finally, we define $Z_U \coloneqq \pi^{-1}(U) \cap \Alb(V) \inc \Alb(V)$, and $\overline{X}_U \coloneqq \overline{g}^{-1}(Z_U) \inc \overline{X}$.
\end{notation}

Thanks to \autoref{no_bad_component_dominates}, $U \neq \emptyset$. By construction, we have a finite étale morphism $\overline{X}_U \to Z_U$. Our strategy to conclude the proof of birationality will be to extend it to a finite étale cover of $\Alb(V)$. We will achieve this by studying precisely the finite index subgroup of $\pi_1(Z_U)$ corresponding to this cover, and ultimately show that it is the preimage of a finite index subgroup of $\pi_1(\Alb(V))$ under $\pi_1(Z_U) \to \pi_1(\Alb(V))$.

\begin{lemma}\label{Galois_extension_and_nice_ramification}
   The finite étale morphism $\overline{X}_U \to Z_U$ is Galois. In particular, $g^{-1}(g(R_g)) \inc R_g$.
\end{lemma}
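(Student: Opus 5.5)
Our plan is to exploit $\overline{P}_2(V)=1$ via \autoref{basic_things}.\autoref{only_one_section}: the log ramification divisor $\overline{R_g}$ moves in no nontrivial linear system. Galois-ness will come from comparing $\overline{X}_U$ with its Galois closure. Let $Y_U\to \overline{X}_U\to Z_U$ be the Galois closure of the finite étale cover $\overline{X}_U\to Z_U$, with group $\mathcal{G}$, and let $\mathcal{H}\subseteq\mathcal{G}$ be the stabilizer corresponding to $\overline{X}_U$. Compactify $Y_U$ to a log smooth pair $(Y,D_Y)$ mapping to $X$ and to $Z$. Then $Y_U$ is a quasi-projective variety of maximal Albanese dimension, and $Y_U\to Z_U$ is étale. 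The idea is to produce, for each coset $\sigma\mathcal{H}$, a log form on $X$ in $H^0(X,\omega_X(D+\overline{R_g}))$, or at least elements distinguishing the sheets. Showing there is only one such form forces $\mathcal{H}$ to be normal (indeed equal to its own conjugates).

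The mechanism we would use is this. The trivializing form $\omega$ on $Z$ pulls back to the unique section $s_0$ of $\omega_X(D)$. Because \autoref{log-erale} says $X\to T$ restricted to general fibres is $(\bP^1,0+\infty)\to(\bP^1,0+\infty)$, i.e.\ an $m$-th power map, the cover over $Z_U$ is, fibrewise over $U$, a cyclic cover of $\bG_m$. We would therefore reduce to the statement that the cover is obtained by pulling back along $Z_U\to U$ a cover of $U$ composed with a fibrewise Kummer cover. Concretely, $\pi_1(Z_U)$ is an extension of $\pi_1(U)$ by $\bZ$ (here \autoref{sequence_Lie_groups_split} gives $Z_U\simeq U\times\bG_m$ topologically, so $\pi_1(Z_U)\cong\pi_1(U)\times\bZ$). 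The subgroup $K$ corresponding to $\overline{X}_U$ has connected fibres over $U$ and intersects the $\bZ$-factor in $m\bZ$. From the connectedness of $\overline X\to T$'s generic fibre, obtained via \autoref{alb_of_X_fibration}, $K$ surjects onto $\pi_1(U)$. Because the group is a product, $K\cap\bZ=m\bZ$ together with surjectivity onto $\pi_1(U)$ does not yet give normality. However, $\bZ$ is central, so $K\supseteq m\bZ$. We then consider $K/m\bZ\subseteq\pi_1(U)\times\bZ/m$, which is the graph of a homomorphism $\pi_1(U)\to\bZ/m$. A graph of a homomorphism into an abelian group is normal, since $\bZ/m$ is central and abelian. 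Hence $K$ is normal, and the cover is Galois.

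For the consequence $g^{-1}(g(R_g))\subseteq R_g$, we would argue component by component. Components of $R_g$ inside $H$ or $g$-exceptional are handled by \autoref{basic_things}.\autoref{exceptional} and by the fact that $H=g^{-1}(\Delta)_{\red}$. For the rest, over a general point of $g(\Gamma)$ the Galois group of the normal cover $\overline{X}\to Z$ (the normalization of $Z$ in the Galois function field extension) acts transitively on the fibre. It therefore permutes the preimage components with equal ramification indices, so if one is ramified, all are.

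The main obstacle is the topological step: justifying that $\pi_1(Z_U)$ is a \emph{direct} product with central fibre $\bZ$, and that $K$ surjects onto $\pi_1(U)$. The first holds because $Z_U\to U$ is the restriction of the group quotient $\Alb(V)\to G$, so \autoref{sequence_Lie_groups_split} applies. The second needs connectedness of general fibres of $\overline{X}_U\to U$, which we would derive from \autoref{alb_of_X_fibration} and \autoref{log-erale}.
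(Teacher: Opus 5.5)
Your working argument (from ``Concretely'' onward) is the paper's. By \autoref{sequence_Lie_groups_split}, $Z_U\simeq U\times\mathbb{G}_m$ over $U$, so the fibre class $\mathbb{Z}$ is central in $\pi_1(Z_U)$. Connectedness of the fibres of $\overline{X}_U\to U$ (from \autoref{alb_of_X_fibration}) makes $K$ surject onto $\pi_1(U)$, and centrality then gives normality. Your ``graph of a homomorphism $\pi_1(U)\to\mathbb{Z}/m$'' repackages the paper's observation that every coset of $K$ has a central representative. Contrary to your remark, surjectivity and centrality already suffice directly, since $\pi_1(Z_U)=\mathbb{Z}\cdot K$. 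The opening strategy (Galois closure, one log form per coset in $H^0(X,\omega_X(D+\overline{R_g}))$) and the Kummer/log-\'etale discussion are never used and can be deleted.

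The one real slip is in the ``in particular'' part. Take a component $\Gamma$ of $R_g$ lying in $H$. The equality $H=g^{-1}(\Delta)_{\mathrm{red}}$ only yields $g^{-1}(g(\Gamma))\subseteq H$, and $H\not\subseteq R_g$ in general: a non-exceptional component of $H$ with ramification index $1$ over its image has coefficient $0$ in $R_g$. So this case needs the same transitivity argument you reserve for ``the rest'', and that argument does apply. Since $\overline{X}$ is the normalization of $Z$ in the Galois extension $\mathbb{C}(\overline{X})/\mathbb{C}(Z)$, the Galois group acts on all of $\overline{X}$. It acts transitively on the preimages of every codimension-one point of $Z$, including the generic points of $\Delta$. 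Hence all non-exceptional prime divisors over a given prime divisor of $Z$ have the same ramification index.

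Likewise, for the exceptional components you need the \emph{proof} of \autoref{basic_things}.\autoref{exceptional}, which shows every $g$-exceptional prime divisor lies in $R_g$. Its statement only places them in $H+\overline{R_g}$. With these two adjustments your proof coincides with the paper's.
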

\begin{proof}
    By construction, the morphism $\overline{X}_U \to Z_U$ is finite étale, so it corresponds to a subgroup $N \inc \pi_1(Z_U)$ (namely the image of $\pi_1(\overline{X}_U) \to \pi_1(Z_U)$). To obtain the first statement, we then need to show that this subgroup is normal by the Galois correspondence for covering spaces.

    Since both $\overline{X}_U \to U$ and $Z_U \to U$ are smooth, they are fibrations in the sense of topology. Let $F^{\circ}$ denote a fiber of $\overline{X}_U \to U$. We therefore have a morphism of exact sequences of homotopy groups 
	\[ \begin{tikzcd}
		& \pi_1(F^{\circ}) \arrow[r] \arrow[d] & \pi_1(\overline{X}_U) \arrow[r] \arrow[d] & \pi_1(U) \arrow[r] \arrow[d, "\cong"] & 1 \\
		1 \arrow[r] & \pi_1(\bG_m) \arrow[r]       & \pi_1(Z_U) \arrow[r]                      & \pi_1(U) \arrow[r]                    & 1
	\end{tikzcd} \]
	(the map $\pi_1(\bG_m) \to \pi_1(Z_U)$ is injective, since $Z_U$ is homeomorphic to $U \times \bG_m$ over $U$ by \autoref{sequence_Lie_groups_split}). A diagram chasing argument shows that the left cosets of $N \inc \pi_1(Z_U)$ are of the form $gN$ with $g \in \pi_1(\bG_m) \inc \pi_1(Z_U)$. Since this is a central subgroup, this shows that left and right cosets of $N$ coincide, i.e. that $N$ is a normal subgroup. The snake lemma then shows that the cokernel of $N \inc \pi_1(Z_U)$ is cyclic.

    Now, let $K = \langle \gamma \rangle$ denote the Galois group of this morphism, and let $R_{\overline{g}}$ denote the ramification divisor of $\overline{g}$. We can extend the action of $\gamma$ on an open subset of $\overline{X}$ with complement of codimension at least $2$, showing that $\overline{g}^{-1}(\overline{g}(R_{\overline{g}})) \inc R_{\overline{g}}$ since $K$ acts transitively on preimages of codimension one points of $Z$ (quotients by finite groups are good quotients). Since all $g$--exceptional divisors of $X$ are contained in $R_g$ by the proof of \autoref{basic_things}.\autoref{exceptional}, we deduce that $g^{-1}(g(R_g)) \inc R_g$.
\end{proof}

Recall that an effective divisor $\Delta_B$ on an abelian variety $B$ is called a \emph{principal polarization} if it is ample and satisfies $h^0(B, \cO_B(\Delta_B)) = 1$. 

\begin{lemma}\label{principal_polarization}
    Assume that $\Lambda \neq 0$. Then there exists a fibration $\phi \colon Z \to B$ to an abelian variety and a principal polarization $\Delta_B \inc B$ such that $g(\Lambda) = \phi^{-1}(\Delta_B)$.
\end{lemma}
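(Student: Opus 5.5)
We want to show that the image $g(\Lambda)\subseteq Z$ comes from a principal polarization on an abelian quotient. The approach is to exploit the uniqueness of log pluricanonical sections, via \autoref{basic_things}.\autoref{only_one_section}, to obtain $h^0=1$ for a divisor on $Z$. Then we use the geometry of effective divisors on abelian varieties and on their $(\bP^1)^r$-bundles.

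\textbf{Step 1: $g(\Lambda)$ is a divisor pulled back from $\Alb(X)$.} Set $\Lambda_Z \coloneqq g(\Lambda)$ with reduced structure. It is a divisor, since the components of $\Lambda$ are not $g$-exceptional. It does not meet $\Delta$ generically, since $\Lambda\not\subseteq H$. By \autoref{no_bad_component_dominates}, the image $f(\Lambda)$ is a proper closed subset of $T$; since $\Lambda_Z$ is a divisor, $\pi(\Lambda_Z)$ is a divisor of $T$ and $\Lambda_Z=\pi^{-1}\pi(\Lambda_Z)$. Every component of $\Lambda_Z$ meets $G$ and avoids the torus directions. I expect that repeating this for each index $i=1,\dots,r$ (each $T_i$) makes $\Lambda_Z$ invariant under the whole torus $\bG_m^r$ acting on $\Alb(V)$. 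This would give $\Lambda_Z=a_Z^{-1}(\Lambda_A)$ for a reduced effective divisor $\Lambda_A\subseteq \Alb(X)$.

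\textbf{Step 2: $h^0(\Alb(X),\cO(\Lambda_A))=1$.} By \autoref{Galois_extension_and_nice_ramification}, $g^{-1}(\Lambda_Z)\subseteq R_g\cup H$ away from exceptional loci. Thus $g^*\Lambda_Z \le \overline{R_g}+(\text{components of } D)$, and I aim for $g^*\Lambda_Z\le \overline{R_g}+D$ after checking multiplicities. Pulling back via $g$, and using $\omega_Z(\Delta)\cong\cO_Z$ together with \autoref{basic_things}.\autoref{MLPT_lemma}, gives an injection
\[ H^0(Z,\cO_Z(\Lambda_Z)) \hookrightarrow H^0(X,\omega_X(D+\overline{R_g})). \]
The latter space is one-dimensional by \autoref{basic_things}.\autoref{only_one_section}. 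Since $\pi$ and $a_Z$ have connected fibers, this yields $h^0(\Alb(X),\cO(\Lambda_A))=1$.

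\textbf{Step 3: reduce to an ample divisor.} By the structure theorem for effective divisors on abelian varieties, there is a quotient $\psi\colon \Alb(X)\to B$ with connected fibers, namely the quotient by the connected component of the stabilizer $K(\Lambda_A)$ of $\Lambda_A$. There is also an ample effective divisor $\Delta_B$ on $B$ with $\Lambda_A=\psi^*\Delta_B$. Moreover $h^0(B,\cO(\Delta_B))=h^0(\Alb(X),\cO(\Lambda_A))=1$, so $\Delta_B$ is a principal polarization, and $\phi\coloneqq\psi\circ a_Z$ is the desired fibration.

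The main obstacle is Step 1 together with the multiplicity comparison in Step 2. For Step 1, I must show that $g(\Lambda)$ is genuinely invariant under the torus and not merely non-dominant over one $T_i$. If the multi-index argument fails, I would instead take the torus-saturation and argue by the same $h^0$ bound. For Step 2, I must verify that the pullback of $\Lambda_Z$ has multiplicities bounded by $\overline{R_g}+D$; the Galois structure gives equal ramification indices along each fiber. If needed, I would replace $\Lambda_Z$ by the divisor whose pullback is exactly contained in $\overline{R_g}$, which still has $h^0=1$ and is still nonzero.
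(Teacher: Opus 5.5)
Your Steps 1 and 3 are sound and agree with the paper. The paper also shows $g(\Lambda)=a_Z^{-1}(a(\Lambda))$ by applying \autoref{no_bad_component_dominates} to every choice of $T_i$; it phrases this via the generic fibre $(\mathbb{P}^1_K)^r$, which is equivalent to your saturation argument. It then writes $a(\Lambda)=\psi^*\Delta_B$ with $\Delta_B$ ample and reduced.

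\textbf{The gap is Step 2, which carries the whole content of the lemma.}

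\emph{The multiplicity target is wrong.} Let $\omega$ be the log volume form on $Z$, so that $g^*\omega\in H^0(X,\omega_X(D))$ has divisor exactly $\overline{R_g}$. For $s\in H^0(Z,\mathcal{O}_Z(\Lambda_Z))$, the natural map is $s\mapsto g^*s\cdot(g^*\omega)^{\otimes 2}\in H^0(X,\omega_X(D)^{\otimes 2})\cong H^0(X,\omega_X(D+\overline{R_g}))$. This section is regular exactly when $g^*\Lambda_Z\le 2\overline{R_g}$.
\begin{itemize}
\item The bound $g^*\Lambda_Z\le\overline{R_g}+D$ you aim for is not the relevant one. With a single copy of $g^*\omega$ it would only place the section in $H^0(X,\omega_X(2D))$, which nothing controls.
\item That bound is also false along components of $\Lambda$ not contained in $D$. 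With $e$ the ramification index, the coefficient is $e$ on the left and $e-1$ on the right.
\item The correct inequality does hold along non-exceptional components. By \autoref{Galois_extension_and_nice_ramification}, every such component over $\Lambda_Z$ lies in $R_g$, so $e\ge 2$ and hence $e\le 2(e-1)$.
\end{itemize}

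\emph{Exceptional divisors are not controlled.} Along a $g$-exceptional divisor $E$ over $\Lambda_Z$, the condition $\mathrm{mult}_E(g^*\Lambda_Z)\le 2\,\mathrm{mult}_E(\overline{R_g})$ is a discrepancy condition. It asks that $(Z,\tfrac12\Lambda_Z)$, pulled back to the cyclic cover, have non-negative discrepancies. That is a statement about the singularities of $\Delta_B$. Such divisors $E$ occur wherever $\overline{X}$ is singular, e.g.\ above singular points of $\Lambda_Z$.

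The condition can fail locally. Suppose $\Lambda_Z$ has multiplicity $3$ along a codimension-two locus $C$. Suppose $E$ induces the valuation of the blow-up of $C$ with ramification index $1$. Then $\mathrm{mult}_E(g^*\Lambda_Z)=3$, while $\mathrm{mult}_E(\overline{R_g})=\mathrm{mult}_E(R_g)=1$.

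Nothing proven so far excludes this. Good singularities of $\Delta_B$ are only known a posteriori, once it is a theta divisor. Your fallback of replacing $\Lambda_Z$ by a smaller divisor with $h^0=1$ goes the wrong way: it gives no upper bound on $h^0(B,\mathcal{O}_B(\Delta_B))$.

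\textbf{How the paper avoids this.} It never compares multiplicities on $X$; it works on $B$.
\begin{itemize}
\item First, $b^{-1}(\Delta_B)\subseteq\overline{R_g}$ set-theoretically, by \autoref{Galois_extension_and_nice_ramification} and \autoref{basic_things}.
\item Using this, Fujino's vanishing, and the Fourier--Mukai transform, it shows $\mathcal{L}:=b_*\omega_X(H\cup\overline{R_g})$ is an \emph{invertible} sheaf on $B$ with $h^0(\mathcal{L})=1$. Concretely: $\mathrm{IT}_0$ makes the transform a line bundle, that line bundle is ample, and Kodaira vanishing then gives local freeness.
\item Next it uses the exact sequence $0\to\mathcal{O}_B\to b_*\omega_X(H+b^{-1}_*\Gamma)\to b_*\omega_{b^{-1}_*\Gamma}(H|_{b^{-1}_*\Gamma})\to 0$. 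Exactness relies on $a_*\omega_X(H)\cong\mathcal{O}_{A(X)}$ and on the connecting map into the torsion-free sheaf $R^1\psi_*\mathcal{O}_{A(X)}$ vanishing.
\item From this sequence, the unique section of $\mathcal{L}$ vanishes at the generic point of every component $\Gamma$ of $\Delta_B$.
\item Since $\mathcal{L}$ is invertible on the smooth $B$ and $\Delta_B$ is reduced, this yields $\mathcal{O}_B(\Delta_B)\hookrightarrow\mathcal{L}$, hence $h^0(B,\mathcal{O}_B(\Delta_B))=1$.
\end{itemize}
Only codimension-one information on $B$ is used, so exceptional divisors of $g$ never enter. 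Your Step 2 is missing some such global input to replace the pointwise divisor inequality on $X$.
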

\begin{proof}
    First of all, let us show by contradiction that $\Lambda$ does not dominate $A(X)$. Let $K$ denote the field of fractions of $A(X)$. If $\Lambda$ dominated $A(X)$, we would obtain that the generic fiber $g(\Lambda)_K$ of $g(\Lambda) \to A(X)$ is a non-zero divisor in $(\bP^1_K)^r$. But then, it must dominate one of the standard projections $(\bP^1_K)^r \to (\bP^1_K)^{r - 1}$, contradicting \autoref{no_bad_component_dominates} (recall from the notations at the very beginning of this section that $T$ can be picked as the compactification of \emph{any} or the $\bG_m$--quotients from $\Alb(V)$).

    Thus, we obtained that $\Lambda$ does not dominate $A(X)$, so $a(\Lambda) \inc A(X)$ must be a divisor for dimensional reasons. We then obtain the existence of a fibration $\psi \colon A(X) \surj B$ of abelian varieties such that $a(\Lambda) = \psi^*\Delta_B$ with $\Delta_B \inc B$ ample (we really mean an equality of divisors, not a linear equivalence). Set $b \coloneqq \psi \circ a \colon X \to B$, and $\phi \coloneqq \psi \circ \theta \circ \pi \colon Z \to B$. Given that all fibers are $\phi$ are irreducible, we have that $\phi^*(\phi(g(\Lambda))) = g(\Lambda)$, or in other words that $\phi^{-1}(\Delta_B) = g(\Lambda)$. Hence, we are therefore left to show that $h^0(B, \cO_B(\Delta_B)) = 1$. To see this, we will construct an injection $\cO_B(\Delta_B) \hookrightarrow b_*\omega_X(H \cup \overline{R_g})$, where $H \cup \overline{R_g} \coloneqq (H + \overline{R_g})_{\red}$.

    Let us first show that $b_*\omega_X(H \cup \overline{R_g})$ is a line bundle, which will be useful to construct this morphism later on. Note that this sheaf has rank $1$ by \autoref{log-erale}, so it is enough to show that it is locally free. By \autoref{Galois_extension_and_nice_ramification}, we have that $b^{-1}(b(\Lambda)) = g^{-1}(g(\Lambda)) \inc R_g$, so $b^{-1}(\Delta_B) \inc \Lambda \cup \mathrm{Exc}(g)$. Since $\Lambda \inc \overline{R_g}$ by \autoref{basic_things}.\autoref{MLPT_lemma}, we deduce by \autoref{basic_things}.\autoref{exceptional} that $b^{-1}(\Delta_B) \inc \overline{R_g}$. Using that $\Delta_B$ is ample, the same trick via \cite[Theorem 6.3.(ii)]{Fujino_Fundamental_theorems_for_the_log_MMP} as in the end of the proof of \autoref{no_bad_component_dominates} yields that for all $i > 0$ and $\alpha \in \Pic^0(B)$, \[ H^i(B, b_*\omega_X(H \cup \overline{R_g}) \otimes \alpha) = 0. \] In particular, $\FM_B(b_*\omega_X(H \cup \overline{R_g}))$ is locally free on $\bighat{B}$ by \autoref{FM_and_cohom}. Since $h^0(B, b_*\omega_X(H \cup \overline{R_g})) = 1$ by \autoref{basic_things}.\autoref{only_one_section}, we obtain that $\FM_B(b_*\omega_X(H \cup \overline{R_g}))$ is in fact a line bundle by \autoref{FM_and_cohom}. Write $\cL \coloneqq \FM_B(b_*\omega_X(H \cup \overline{R_g}))$. Since $b_*\omega_X(H \cup \overline{R_g}) \cong \FM_{\bighat{B}}(\cL)$ is supported in degree zero and is torsion-free, we deduce by \cite[Proposition 2.8]{Pareschi_Popa_Regularity_on_abelian_varieties_III} and \cite[Example 3.10]{Pareschi_Popa_M_regularity_and_the_Fourier_Mukai_transform} that $\cL$ is ample. By Kodaira vanishing and \autoref{FM_and_cohom}, we deduce that $\Tor_i(b_*\omega_X(H \cup \overline{R_g}), \bC(x)) = 0$ for all $x \in A$ and $i > 0$. In other words, we have proven that $b_*\omega_X(H \cup \overline{R_g})$ is itself locally free. 

    Let us now construct the injection $\cO_B(\Delta_B) \hookrightarrow b_*\omega_X(H \cup \overline{R_g})$. Given that both sheaves are line bundles, it is enough to verify that the cokernel of $\cO_B \hookrightarrow b_*\omega_X(H \cup \overline{R_g})$ (coming from the unique non-zero section of this sheaf) contains each codimension one point of $\Delta_B$. Let us fix an irreducible component $\Gamma$ of $\Delta_B$, and note that $H + b^{-1}_*\Gamma \leq H \cup \overline{R_g}$, since $b^{-1}_*\Gamma \not\subseteq H$ by construction. We will show that the generic point of $\Gamma$ lies in the cokernel of the unique (non-zero) map $\cO_B \to b_*\omega_X(H + b^{-1}_*\Gamma)$ (this latter sheaf embeds in $b_*\omega_X(H \cup \overline{R_g})$ by the previous sentence).
    
    Since $R^ig_*\omega_X(H) = 0$ for all $i > 0$ by \autoref{hodge_module_lemma}, we have an exact sequence 
    \[ \begin{tikzcd}
        0 \arrow[r] & g_*\omega_X(H) \arrow[r]  & g_*\omega_X(H + b^{-1}_*\Gamma) \arrow[r] & g_*\omega_{b^{-1}_*\Gamma}(H|_{b^{-1}_*\Gamma}) \arrow[r] & 0
    \end{tikzcd} \] on $Z$. Recall that $\Delta$ is $(\theta \circ \pi)$-ample, so the same argument using \cite[Theorem 6.3.(ii)]{Fujino_Fundamental_theorems_for_the_log_MMP} as in the end of the proof of \autoref{no_bad_component_dominates} shows that $R^i(\theta \circ \pi)_*(g_*\omega_X(H)) = 0$ for $i > 0$, so we have an exact sequence 
    \[ \begin{tikzcd}
        0 \arrow[r] & a_*\omega_X(H) \arrow[r]  & a_*\omega_X(H + b^{-1}_*\Gamma) \arrow[r]  & a_*\omega_{b^{-1}_*\Gamma}(H|_{b^{-1}_*\Gamma}) \arrow[r]  & 0
    \end{tikzcd} \] on $A(X)$. By \autoref{lem:GVmorphism}.\autoref{lem:GVmorphismitem}, the sheaf $a_*\omega_X(H)$ contains a unipotent vector bundle as a direct summand. However, this sheaf has rank one by \autoref{log-erale}, so we deduce that $a_*\omega_X(H) \cong \cO_A$ by torsion--freeness. Pushing forward to $B$ then gives an exact sequence \[ \begin{tikzcd}
        0 \arrow[rr] &  & \cO_B \arrow[rr]          &  & b_*\omega_X(H + b^{-1}_*\Gamma) \arrow[rr] &  & b_*\omega_{b^{-1}_*\Gamma}(H|_{b^{-1}_*\Gamma}) \connecting{dllll}{""} \\
        &  & R^1\psi_*\cO_A \arrow[rr] &  & \dots &  &        
    \end{tikzcd} \] Given that $R^1\psi_*\cO_A$ is torsion--free while $b_*(\omega_{b^{-1}_*\Gamma}(H|_{b^{-1}_*\Gamma}))$ is torsion on $B$, we deduce that the connecting morphism above is zero. However, the morphism $(b^{-1}_*\Gamma, H|_{b^{-1}_*\Gamma}) \to (Z, \Delta)$ is generically finite, whence $0 \neq H^0(b^{-1}_*\Gamma, \omega_{b^{-1}_*\Gamma}(H|_{b^{-1}_*\Gamma})) = H^0(B, b_*(\omega_{b^{-1}_*\Gamma}(H|_{b^{-1}_*\Gamma})))$. Thus, the sheaf $b_*(\omega_{b^{-1}_*\Gamma}(H|_{b^{-1}_*\Gamma}))$ is in particular non--zero, so it must be supported everywhere on $\Gamma$. This shows that the cokernel of $\cO_B \hookrightarrow b_*\omega_X(H + b^{-1}_*\Gamma)$ contains the generic point of $\Gamma$, so the proof is complete.
\end{proof}

\begin{proposition}\label{general_result_first_homology_groups}
    Let $B$ be an abelian variety, and let $\Delta_B$ be a principal polarization on $B$. Then the morphism of homology groups $H_1(B \setminus \Delta_B, \bZ) \to H_1(B, \bZ)$ is an isomorphism.
\end{proposition}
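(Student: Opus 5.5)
The plan is to compute the kernel of $H_1(B\setminus\Delta_B,\bZ)\to H_1(B,\bZ)$ with the long exact sequence of the pair $(B,B\setminus\Delta_B)$, and then to kill it using the unimodularity of a principal polarization.

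\textbf{Exact sequence.} Write $\Delta_B=\Delta_1+\dots+\Delta_k$ for the decomposition into irreducible components. The pair gives an exact sequence
\[ H_2(B,\bZ)\to H_2(B,B\setminus\Delta_B;\bZ)\to H_1(B\setminus\Delta_B,\bZ)\to H_1(B,\bZ)\to H_1(B,B\setminus\Delta_B;\bZ). \]
By excision and Lefschetz/Thom duality we have $H_j(B,B\setminus\Delta_B;\bZ)\cong H^{2g-j}(\Delta_B,\bZ)$, where $g=\dim B$ and $\Delta_B$ has real codimension $2$.
\begin{itemize}
\item For $j=1$ this group vanishes, since $\Delta_B$ has real dimension $2g-2<2g-1$. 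Hence $H_1(B\setminus\Delta_B)\to H_1(B)$ is surjective. This also follows from the standard fact that $\pi_1$ of the complement of a proper closed subset of a smooth connected variety surjects onto $\pi_1$ of the variety.
\item For $j=2$ we get $H^{2g-2}(\Delta_B,\bZ)\cong\bigoplus_{i=1}^k\bZ$, one generator per component; this is the span of the meridian loops around the $\Delta_i$.
\item Under these identifications, the first map $H_2(B,\bZ)\to\bZ^k$ becomes $\gamma\mapsto(\gamma\cdot\Delta_i)_i$, the intersection numbers with the classes $[\Delta_i]\in H^2(B,\bZ)$.
\end{itemize}
The statement is therefore equivalent to surjectivity of the map $\gamma\mapsto(\gamma\cdot\Delta_i)_i$. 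Equivalently, we need classes $\gamma_j\in H_2(B,\bZ)$ with $\gamma_j\cdot\Delta_i=\delta_{ij}$.

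\textbf{Reduction to the irreducible case.} I will use the classical decomposition theorem for principally polarized abelian varieties: $(B,\Delta_B)$ splits uniquely as a product $\prod_{i}(B_i,\Theta_i)$ of principally polarized abelian varieties with $\Theta_i$ irreducible. Under this splitting $\Delta_B=\sum_i \mathrm{pr}_i^*\Theta_i$, so the components are exactly $\Delta_i=\mathrm{pr}_i^{-1}(\Theta_i)$. Including $B_j$ into $B$ as a factor (with base point $0$ in the other factors), a class $\gamma\in H_2(B_j,\bZ)$ satisfies $\gamma\cdot\Delta_i=0$ for $i\neq j$ and $\gamma\cdot\Delta_j=\gamma\cdot\Theta_j$. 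So it suffices, for a single irreducible principal polarization $\Theta$ on an abelian variety $B'$, to find $\gamma\in H_2(B',\bZ)$ with $\gamma\cdot\Theta=1$.

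\textbf{Irreducible case.} The first Chern class $c_1(\Theta)\in H^2(B',\bZ)=\wedge^2H^1(B',\bZ)$ is an alternating form on $H_1(B',\bZ)$. Because $\Theta$ is principal, this form is unimodular. Choose a symplectic basis $e_1,\dots,e_{g'},f_1,\dots,f_{g'}$ of $H_1(B',\bZ)$, so that $c_1(\Theta)(e_1,f_1)=1$. Set $\gamma=e_1\wedge f_1\in\wedge^2H_1(B',\bZ)=H_2(B',\bZ)$; it is represented by a real $2$-subtorus. Then $\gamma\cdot\Theta=\langle c_1(\Theta),e_1\wedge f_1\rangle=1$, as required.

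The main point needing care is the identification of the connecting map with intersection numbers against the $[\Delta_i]$ (including sign conventions and torsion-freeness), together with invoking the decomposition theorem for reducible principal polarizations. If one prefers to avoid the decomposition theorem, it suffices to prove directly that the classes $[\Delta_i]$ extend to a $\bZ$-basis of a saturated sublattice dual to $H_2(B,\bZ)$; but the product decomposition is the cleanest route.
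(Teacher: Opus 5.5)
Your proof is correct, but it takes a different route from the paper's. Both arguments start from the decomposition theorem for principally polarized abelian varieties. The paper applies K\"unneth to $B\setminus\Delta_B=\prod_i(B_i\setminus\Theta_i)$, while you use the factors to build test classes $\gamma_j$ that separate the components.

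In the irreducible case the paper argues by contradiction through covering spaces. A cyclic quotient of order $m>1$ of $H_1(B\setminus\Delta_B,\bZ)$, whose kernel still surjects onto $H_1(B,\bZ)$, gives (via Riemann's existence theorem and normalization) a cyclic cover of $B$ that must branch along $\Delta_B$. Koll\'ar's structure result for cyclic covers then produces $L$ with $L^{\otimes m}\cong\cO_B(s\Delta_B)$ and $0<s<m$. Riemann--Roch plus Kodaira vanishing give $m^{g}\chi(B,L)=s^{g}$, which is absurd.

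You instead use Lefschetz duality to identify the kernel of $H_1(B\setminus\Delta_B,\bZ)\to H_1(B,\bZ)$ exactly with the cokernel of $\gamma\mapsto(\gamma\cdot\Delta_i)_i$ on $H_2(B,\bZ)$. You then kill that cokernel with the unimodularity of the Riemann form. The essential input is the same in both proofs: primitivity of the class of $\Delta_B$. The paper extracts it from $\chi(\cO_B(\Delta_B))=1$, and you extract it from the type $(1,\dots,1)$ of the polarization.

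Your argument buys several things:
\begin{itemize}
\item it is purely topological;
\item it avoids Riemann's existence theorem, the cyclic-cover lemma and Kodaira vanishing;
\item it yields a general criterion for any divisor in a compact complex manifold: injectivity on $H_1$ is equivalent to surjectivity of the intersection map $H_2(B,\bZ)\to\bZ^k$.
\end{itemize}
The price is the two points you flag: Lefschetz duality for the singular compact set $\Delta_B$, and freeness of rank $k$ of $H^{2g-2}(\Delta_B,\bZ)$. The latter does hold. The singular locus $S$ of $\Delta_B$ has real dimension at most $2g-4$, so $H^{2g-2}(\Delta_B,\bZ)\cong H^{2g-2}_c(\Delta_B\setminus S,\bZ)$. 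Moreover, $\Delta_B\setminus S$ is an oriented manifold with exactly one connected component per irreducible component of $\Delta_B$. The paper's cover-theoretic argument sidesteps both issues.
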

\begin{proof}
    By \cite[Theorem 4.3.1]{Complex_abelian_varieties} and the Künneth formula, we may assume that $\Delta_B$ is irreducible. Write $W \coloneqq B \setminus \Delta_B$. Since $\pi_1(W) \to \pi_1(B)$ is surjective ($B$ is normal, so the restriction of a connected covering space of $B$ to $W$ is still connected), the map on abelianizations $H_1(W, \bZ) \to H_1(B, \bZ)$ is also surjective. Assume by contradiction that this morphism is not injective, and let $M \inc H_1(W, \bZ)$ be a subgroup surjecting onto $H_1(B, \bZ)$ such that $H_1(W, \bZ)/M$ is cyclic of order $m > 1$. Taking the preimage of $M$ under the abelianization map $\pi_1(W) \to H_1(W, \bZ)$ gives rise to a Galois covering space $\ttilde{W} \to W$ with Galois group $\bZ/m\bZ$. Since $\ttilde{W}$ is quasi--projective by Riemann's existence theorem (see e.g. \cite[Théorème 5.1]{SGA1}), taking the normalization of $B$ in $\bC(\ttilde{W})$ gives a finite morphism $\phi \colon \ttilde{B} \to B$, such that the induced field extension is Galois. Note that $\phi$ cannot be étale since $M$ surjects onto $H_1(B, \bZ) = \pi_1(B)$, so $\phi$ must ramify over $\Delta_B$. By \cite[Corollary 2.46]{Kollar_Singularities_of_the_minimal_model_program}, there exists $0 < \alpha < 1$ such that $m\alpha \in \bZ$ and $L \in \Pic(B)$ such that $L^{\otimes m} \cong \cO_B(m\alpha \Delta_B)$. Write $\alpha = \frac{s}{m}$ with $0 < s < m$, so that $L^{\otimes m} \cong \cO_B(s\Delta_B)$. Applying the Riemann--Roch formula on $B$ together with Kodaira vanishing gives that \[ m^{\dim(B)} \cdot \chi(B, L) = \chi(B, L^{\otimes m}) = \chi(B, \cO_B(s\Delta_B)) = s^{\dim(B)}\chi(B, \cO_B(\Delta_B)) = s^{\dim(B)}, \] leading to the contradiction that $m \leq s$.
\end{proof}

\begin{corollary}\label{topological_lemma}
    The inclusion $Z_U \inc \Alb(V)$ induces an isomorphism of homology groups $H_1(Z_U, \bZ) \cong H_1(\Alb(V), \bZ)$.
\end{corollary}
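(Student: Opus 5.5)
The plan is to reduce the statement, via the topological splitting of \autoref{sequence_Lie_groups_split}, to the statement of \autoref{general_result_first_homology_groups}. I use twice that for path-connected spaces $P, Q$ we have $H_1(P \times Q, \bZ) \cong H_1(P, \bZ) \oplus H_1(Q, \bZ)$, naturally in $P$ and $Q$.

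\emph{Step 1: remove the $\bG_m$-factor.} The quotient $\Alb(V) \to G = \Alb(V)/\bG_{m,1}$ is a surjective morphism of connected abelian Lie groups with connected kernel $\bG_{m,1}$. Hence \autoref{sequence_Lie_groups_split} gives a homeomorphism $\Alb(V) \cong G \times \bG_m$ over $G$. Since $Z_U = \pi^{-1}(U) \cap \Alb(V)$ is the preimage of the open set $U \subseteq G$, this restricts to a homeomorphism $Z_U \cong U \times \bG_m$ over $U$, compatible with the inclusions. By the product formula, the map $H_1(Z_U,\bZ) \to H_1(\Alb(V),\bZ)$ is identified with
\[ H_1(U,\bZ) \oplus H_1(\bG_m,\bZ) \longrightarrow H_1(G,\bZ) \oplus H_1(\bG_m,\bZ), \]
which is the identity on the second factor. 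So it suffices to show that $H_1(U,\bZ) \to H_1(G,\bZ)$ is an isomorphism.

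\emph{Step 2: identify $U$.} If $\Lambda = 0$, then $U = G$ and there is nothing to prove. Otherwise, \autoref{principal_polarization} gives a fibration $\phi = \psi \circ \theta \circ \pi \colon Z \to B$ and a principal polarization $\Delta_B$ with $g(\Lambda) = \phi^{-1}(\Delta_B)$. Since $\pi$ is surjective, $f(\Lambda) = \pi(g(\Lambda)) = (\psi\circ\theta)^{-1}(\Delta_B)$. Consequently
\[ U = G \setminus \rho^{-1}(\Delta_B), \]
where $\rho \colon G \to A(X) \to B$ is the composition of the quotient to the compact part with $\psi$. The map $\rho$ is a surjective homomorphism, and its kernel is an extension of $\ker(\psi)$ by the torus part of $G$. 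Here $\ker(\psi)$ is connected because $\psi$ is a fibration of abelian varieties, and the torus part is connected, so $\ker(\rho)$ is connected.

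\emph{Step 3: split over $B$.} Applying \autoref{sequence_Lie_groups_split} to $\rho$ gives a homeomorphism $G \cong B \times \ker(\rho)$ over $B$. It restricts to $U \cong (B \setminus \Delta_B) \times \ker(\rho)$, compatibly with the inclusions. By the product formula again, $H_1(U,\bZ) \to H_1(G,\bZ)$ becomes
\[ H_1(B\setminus\Delta_B,\bZ) \oplus H_1(\ker\rho,\bZ) \longrightarrow H_1(B,\bZ) \oplus H_1(\ker\rho,\bZ), \]
which is the identity on the second factor. The first factor is an isomorphism by \autoref{general_result_first_homology_groups}, which proves the corollary.

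The main point needing care is Step 2. One must check that $U$ is \emph{exactly} the full preimage of $B \setminus \Delta_B$, which uses the equality of sets $g(\Lambda) = \phi^{-1}(\Delta_B)$ from \autoref{principal_polarization}, and that $\ker\rho$ is connected, so that \autoref{sequence_Lie_groups_split} applies. The rest is formal.
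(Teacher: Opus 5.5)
Your proof is correct and follows the paper's argument: use \autoref{principal_polarization} to see that $Z_U$ is exactly the preimage of $B \setminus \Delta_B$, split over $B$ via \autoref{sequence_Lie_groups_split}, and conclude with the Künneth formula and \autoref{general_result_first_homology_groups}. The only difference is that the paper applies the splitting lemma once, directly to the composite $\Alb(V) \to B$ (whose kernel is also connected), so your Step 1 is harmless but unnecessary.
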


\begin{proof}
    If $\Lambda = 0$, then $Z_U = A(V)$ and the assertion is immediate. Let us therefore assume that $\Lambda \neq 0$. By \autoref{principal_polarization}, there exists a fibration $\phi \colon Z \to B$ to an abelian variety $B$ such that $g(\Lambda) = \phi^{-1}(\Delta_B)$ for some principal polarization $\Delta_B$ on $B$. Set $W \coloneqq B \setminus \Delta_B$. Consider the induced morphism $\ttilde{\phi} \colon A(V) \to B$, which therefore is a fibration of semi-abelian varieties satisfying $\ttilde{\phi}^{-1}(W) = Z_U$. By \autoref{sequence_Lie_groups_split}, there is an homeomorphism $A(V) \cong B \times F$ over $B$, where $F$ is a fiber of $\ttilde{\phi}$. This also induces a homeomorphism $Z_U \cong W \times F$, so it is enough to show that $H_1(W, \bZ) \to H_1(B, \bZ)$ is an isomorphism by the Künneth formula. This follows from \autoref{general_result_first_homology_groups}. \qedhere
    
    %(again, here we mean an e consider a fibration of abelian varieties $\psi \colon A(X) \surj B$ such that $R' = \psi^*(R'')$ for some ample effective divisor $R''$ on $B$, and set $W \coloneqq B \setminus R''$. We then deduce from \autoref{principal_polarization} that $R''$ is a principal polarization, so the inclusion $H_1(W, \bZ) \to H_1(B, \bZ)$ is an isomorphism by \autoref{general_result_first_homology_groups}. Consider the composition $h \coloneqq \psi \circ \theta \circ \pi \colon \Alb(V) \to B$ and note that by construction, $Z_U = h^{-1}(W)$. Let $H$ be a fiber of $h$. Then by \autoref{sequence_Lie_groups_split}, there exists a homeomorphism $\Alb(V) \cong B \times H$ over $B$. This isomorphism also induces an isomorphism $Z_U \cong B \times W$ over $W$, so combining the Künneth formula and \autoref{Galois_extension_and_nice_ramification} gives the result.
\end{proof}

We now have all the tools to finish the proof of birationality.

\begin{proposition}\label{birational_mad_case}
	The Albanese morphism $V \to A(V)$ is birational.
\end{proposition}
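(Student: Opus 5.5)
We argue by induction on $r$ as set up at the start of the section, and extend the finite étale cover $\overline{X}_U \to Z_U$ to a finite étale cover of $\Alb(V)$. By \autoref{Galois_extension_and_nice_ramification}, $\overline{X}_U \to Z_U$ is a Galois cover corresponding to a normal subgroup $N \inc \pi_1(Z_U)$ with cyclic quotient $\pi_1(Z_U)/N$. The first step is to note that any subgroup with abelian quotient contains the commutator subgroup, so $N$ is the preimage of a finite index subgroup $\overline{N} \inc H_1(Z_U, \bZ)$. By \autoref{topological_lemma}, $H_1(Z_U,\bZ) \cong H_1(\Alb(V), \bZ) = \pi_1(\Alb(V))$, the latter group being abelian. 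Hence $N$ is the preimage of a finite index subgroup $N' \inc \pi_1(\Alb(V))$ under the surjection $\pi_1(Z_U) \to \pi_1(\Alb(V))$.

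The subgroup $N'$ defines a connected finite étale cover $G' \to \Alb(V)$. By Riemann's existence theorem it is algebraic, and $G'$ is again a semi-abelian variety (a connected finite étale cover of a semi-abelian variety is a group extension of the same type). By construction, its restriction to $Z_U$ is isomorphic over $Z_U$ to $\overline{X}_U$. Both $\overline{X}$ and the normalization of $Z$ in $\bC(G')$ are normalizations of $Z$ in the same function field, so they agree. In particular there is a rational map $V \dashrightarrow G'$ which on $\overline{X}$ is the normalization map over $\Alb(V)$.

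Next we show that $\overline{X} \times_Z \Alb(V) \to \Alb(V)$ coincides with $G'$, which is the only delicate point. We would argue that the normalization of $\Alb(V)$ in $\bC(G')$ is $G'$ itself, because $G'$ is normal and finite over $\Alb(V)$. Therefore $\overline{g}^{-1}(\Alb(V)) \cong G'$, and $g$ restricted to $V \cap g^{-1}(\Alb(V))$ factors through $\mu$ into $G'$. Since $V \to \overline{X}$ is a morphism, the composite $V \to \overline{X} \supseteq \overline{g}^{-1}(\Alb(V)) = G'$ is a morphism $V \to G'$. Here we use that $\alb_V$ lands in $\Alb(V)$, so $V \inc g^{-1}(\Alb(V))$.

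By the universal property of the Albanese, $V \to G'$ factors as $V \to \Alb(V) \to G'$ up to translation. Composing with $G' \to \Alb(V)$ gives an endomorphism of $\Alb(V)$ compatible with $\alb_V$, which is the identity since $V$ generates $\Alb(V)$. Thus $G' \to \Alb(V)$ has a section that is a group homomorphism, and since $G'$ is connected it is an isomorphism. Hence $\overline{g}$ has degree one over $\Alb(V)$, so $\deg g = 1$ and $\alb_V$ is birational, using dominance from \autoref{dominance_in_mad_case}.

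I expect the main obstacle to be the identification $\overline{g}^{-1}(\Alb(V)) \cong G'$, that is, making sure no extra branching over $\Alb(V) \setminus Z_U$ occurs. The topological input of \autoref{topological_lemma} is exactly what gives unramified-ness there: purity of the branch locus for normal $\overline{X}$, together with the extension of the cover across $\Alb(V)$, handles it.
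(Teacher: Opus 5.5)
Your proposal is correct and follows essentially the same route as the paper: use \autoref{Galois_extension_and_nice_ramification} and \autoref{topological_lemma} to see that $N$ is the preimage of a finite index subgroup of $\pi_1(\Alb(V))$. You then extend $\overline{X}_U \to Z_U$ to a finite étale cover of $\Alb(V)$, identify it with $\overline{g}^{-1}(\Alb(V))$ since both are the normalization of $\Alb(V)$ in the same function field, and conclude with the universal property of the Albanese; purity of the branch locus, mentioned in your last paragraph, is not needed, because this normalization argument already rules out extra branching over $\Alb(V) \setminus Z_U$.
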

\begin{proof}
    Our objective will be to show that $\overline{X} \to Z$ is étale over $A(V)$. This would imply that $\overline{g}^{-1}(\Alb(V))$ is a semi--abelian variety by \cite{Fujino_On_quasi_Albanese_maps}, whence $\alb_V \colon V \to \Alb(V)$ would factor through a semi--abelian variety. The universal property of the Albanese morphism would therefore force $\overline{g}^{-1}(\Alb(V)) \to \Alb(V)$ to be an isomorphism, thereby concluding the proof.
    
    By construction, the morphism $\overline{X}_U \to Z_U$ is finite étale, so it corresponds to a subgroup $N \inc \pi_1(Z_U)$. By \autoref{Galois_extension_and_nice_ramification}, this subgroup is normal and its quotient is cyclic. In particular, $N$ is the preimage of a subgroup $\overline{N}$ of $\pi_1(Z_U)_{\ab}$ under the abelianization map. Note that \autoref{topological_lemma} gives us that $\pi_1(Z_U)_{\ab} \to \pi_1(\Alb(V))$ is an isomorphism, so there exists a subgroup $M \inc \pi_1(\Alb(V))$ of finite index such that $N$ is the preimage of $M$ under $\pi_1(Z_U) \to \pi_1(\Alb(V))$. This finite index subgroup corresponds to an étale cover $\Alb(V)' \to \Alb(V)$ and the fact that $N$ is the preimage of $M$ gives an isomorphism $\overline{X}_U \cong Z_U \times_{\Alb(V)} \Alb(V)'$ over $A(V)$. In particular, the morphism $\overline{X}_U \to Z_U$ can be extended to a finite morphism $A(V)' \to A(V)$. Since it can also be extended to a finite morphism $\overline{g}^{-1}(A(V)) \to A(V)$ and all schemes involved are normal, we deduce that $\overline{g}^{-1}(A(V))$ is isomorphic to $A(V)'$ over $A(V)$ (there are both the normalization of $A(V)$ in the same function field). We have therefore proven that $\overline{g}$ is étale over $A(V)$, so the proof is complete. \qedhere

    %a finite étale cover of $A(V)$. However there is a unique such extension (all schemes involved are normal), namely the integral closure of $A(V)$ in $\bC(\overline{X}_U)$, which is precisely $\overline{g}^{-1}(A(V)) \to A(V)$. In other words, we have proven that $\overline{g}^{-1}(A(V)) \to A(V)$ is étale, concluding the proof. \qedhere

    %\JB{I don't like how what follows is written...} In particular $\bC(\overline{X}) \cong \bC(\Alb(V)')$. Remark that $\Alb(V)'$ is equal to the integral closure of $\cO_{\Alb(V)}$ in $\bC(\Alb(V)')$, and similarly $\overline{g}^{-1}(\Alb(V))$ is equal to the normalization of $\cO_{\Alb(V)}$ in $\bC(\overline{X})$. Since $\Alb(V)' \to \Alb(V)$ is étale, we deduce that $\overline{g}^{-1}(\Alb(V)) \to \Alb(V)$ is also étale, thereby finishing concluding the proof. 
\end{proof}

\begin{thm}[{\autoref{main_thm_max_alb_dim}}]\label{WWPB_mad}
    The Albanese morphism $V \to \Alb(V)$ is an isomorphism away from a closed subset of codimension at least $2$.
\end{thm}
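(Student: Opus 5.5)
We already know from \autoref{birational_mad_case} that $\alb_V \colon V \to \Alb(V)$ is birational, so the statement is a WWPB upgrade. The plan is to show two things. First, no prime divisor of $\Alb(V)$ is missed by $\alb_V(V)$. Second, no prime divisor of $V$ is contracted by $\alb_V$ to a subset of codimension $\geq 2$ in $\Alb(V)$. Once both hold, Zariski's main theorem applied to the birational morphism $\alb_V$, with $\Alb(V)$ normal, gives that $\alb_V$ is an open immersion over the complement of $\alb_V(\mathrm{Exc}) \cup (\Alb(V)\setminus \alb_V(V))$. This closed set then has codimension at least $2$, since the missing part has codimension $\geq 2$ and exceptional divisors map to codimension $\geq 2$ sets, which are now excluded.

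\textbf{Step 1 (no exceptional divisors in $V$).} Let $E \subseteq X$ be a $g$--exceptional prime divisor meeting $V$, so $E \not\subseteq D$. Since $g$ is birational, the log ramification formula \autoref{logramification} gives $K_X + D \sim \overline{R_g}$. Here $\overline{R_g}$ is effective and $g$--exceptional, and by \autoref{basic_things}.\autoref{MLPT_lemma} and \autoref{basic_things}.\autoref{exceptional} it contains $E$ with positive coefficient. I would then show that $\overline{R_g}$ is the unique effective divisor in $|K_X+D|$, using $\overline{P}_1(V)=1$ (\autoref{firstP}). By \autoref{basic_things}.\autoref{the_section_has_poles_on_the_wholeH}, the unique section of $\omega_X(D)$ is the pullback $g^*\omega$ of the nowhere-vanishing log volume form on $Z$. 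Its divisor of zeros along $V$ is exactly $R_g|_V$, which contains $E$. On the other hand, $g^*\omega \in H^0(X,\omega_X(H))$ has poles only along $H$. So $h^0(\omega_X(D))=1$ forces $g^*\omega$ to be \emph{the} generator, and $E$ is a zero of it.

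To derive a contradiction, I would push forward to $Z$. Since $\overline{R_g}$ is $g$--exceptional, $g_*\omega_X(D) \cong \omega_Z(\Delta) \cong \cO_Z$ on the open set $g(X\setminus\mathrm{Exc})$. More usefully, I would consider $\omega_X(D + E)$. It has the same pushforward as $\omega_X(D)$ on $Z$ away from codimension $2$, but a log form $\eta$ on $Z$ pulls back to a form vanishing on $E$. So the quotient $g^*\omega / (\text{local equation of } E)$ is a section of $\omega_X(D)$ which, by uniqueness, must be a scalar multiple of $g^*\omega$. That is absurd, since it vanishes to one lower order along $E$. If this direct argument is not rigid enough, the fallback is to use $\overline{P}_2(V)=1$ via \autoref{basic_things}.\autoref{only_one_section}, comparing $\omega_X(D+\overline{R_g})$ with $\omega_X(D+\overline{R_g}-E)$ through the exact sequence restricting to $E$, exactly as in \autoref{no_bad_component_dominates}. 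I expect this step to be the main obstacle: making the "one extra section" argument honest, i.e.\ producing a second independent section of a log pluricanonical bundle out of an exceptional divisor hitting $V$.

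\textbf{Step 2 (no missing divisors in $\Alb(V)$).} Suppose a prime divisor $P \subseteq \Alb(V)$ is not dominated by any component of $X$ meeting $V$. Then every divisor of $X$ over $P$ lies in $D$, so $P$ lies in $g(D)$. By \autoref{basic_things}.\autoref{the_section_has_poles_on_the_wholeH}, the pullback $g^*\omega$ has poles along the components of $H$ dominating the closure of $P$. That is, $g^*\omega$ has a pole along a component of $D$ mapping onto $\overline{P}$, which is not in $\Delta$. But $\omega$ is regular along $\overline{P}$, and the pullback of a regular form has no pole there, since $g^*\omega$ only acquires poles over $\Delta$. This is a contradiction, and it gives the codimension-$2$ statement.
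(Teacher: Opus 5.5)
There is a genuine gap, in each of your two steps.

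\textbf{Step 1 is unnecessary, and the claim it targets is false.} The statement only asks that $\alb_V$ be an isomorphism over the complement of a closed subset of $\Alb(V)$ of codimension $\geq 2$. Exceptional divisors of $V$ are therefore allowed, since their images automatically have codimension $\geq 2$. Concretely, the proof of \autoref{birational_mad_case} shows that $\overline{g}$ is an isomorphism over $\Alb(V)$. So $g\colon X\to Z$ is proper birational onto a normal variety, hence an isomorphism off a codimension-$2$ subset of $Z$, and $g^{-1}(\Alb(V))=X\setminus H$. Since $V=X\setminus D$, the only thing left to prove is that every component of $D-H$ is $g$-exceptional; this is exactly the goal of your Step 2. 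Your claim that no $g$-exceptional divisor meets $V$ is false. The blow-up of an abelian surface at a point has $\overline{q}=2$, all $\overline{P}_m=h^0(\cO(mE))=1$, and maximal Albanese dimension, yet its exceptional curve lies in $V$. Accordingly, your division of $g^*\omega$ by a local equation of $E$ produces nothing new. It is the same rational form, now viewed as a section of $\omega_X(H-E)$. A genuinely new section would need a rational function whose only pole is a simple pole along $E$, and none exists because $h^0(X,\cO_X(E))=1$.

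\textbf{Step 2, which carries the whole theorem, does not work.} \autoref{basic_things}.\autoref{the_section_has_poles_on_the_wholeH} concerns components of $H$, i.e.\ divisors lying over $\Delta$. A component $\Gamma$ of $D$ over a divisor $\overline{P}\not\subseteq\Delta$ lies in $D-H$, and along it $g^*\omega$ is regular, as you note yourself; so no contradiction arises. In fact your argument uses only birationality and $\overline{P}_1(V)=1$, and these cannot suffice. Take an elliptic curve $C$ and $V=C\setminus\{p\}$. Then $\overline{q}(V)=\overline{P}_1(V)=1$, and $\alb_V$ is the inclusion into $C$, missing the divisor $p$. The unique section of $\omega_C(p)$ is $dz$, which has no pole at $p$; the only hypothesis that fails is $\overline{P}_2(V)=2$.

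\textbf{The missing idea is how $\overline{P}_2(V)=1$ enters.} The paper argues as follows.
Suppose $\Gamma\subseteq D-H$ is not contracted by $g$. Then $(\Gamma,H|_\Gamma)\to(Z,\Delta)$ is generically finite, so $h^0(\Gamma,\omega_\Gamma(H|_\Gamma))\neq0$.
Push $0\to\omega_X(H)\to\omega_X(H+\Gamma)\to\omega_\Gamma(H|_\Gamma)\to0$ forward to $A(X)$ as in \autoref{principal_polarization}, using $a_*\omega_X(H)\cong\cO_{A(X)}$. This gives $V^0(a_*\omega_\Gamma(H|_\Gamma))\subseteq V^0(a_*\omega_X(H+\Gamma))\cup V^1(A(X),\cO_{A(X)})$.
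In this set $0$ is isolated by \autoref{lem:GVmorphism}.\autoref{lem:noZ}, which is where $\overline{P}_2(V)=1$ is used. Hence $a_*\omega_\Gamma(H|_\Gamma)$ has a unipotent, in particular everywhere supported, direct summand, so $\Gamma$ dominates $A(X)$.
Since $\Gamma\subseteq\overline{R_g}$ by \autoref{basic_things}.\autoref{MLPT_lemma}, this contradicts \autoref{no_bad_component_dominates}, via the first paragraph of the proof of \autoref{principal_polarization}. For $r=0$ the contradiction is immediate, since $\dim\Gamma<\dim A(X)$.
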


\begin{proof}
    Throughout this proof, in the case where $r = 0$, it is understood that $H = 0$ and that $Z = \Alb(X)$ (so $\Delta = 0$). We need to show that every component of $D-H$ is contracted by $g$. Let $\Gamma$ be such a component and suppose that $g(\Gamma)$ is a divisor on $Z$. Then $(\Gamma, H|_{\Gamma}) \to (Z, \Delta)$ is generically finite, so $h^0(\Gamma, \omega_{\Gamma}(H|_{\Gamma})) \neq 0$. The same argument as in the proof of \autoref{principal_polarization} gives an exact sequence 
    \[ \begin{tikzcd}
        0 \arrow[rr] &  & \cO_{A(X)} \arrow[rr] &  & a_*\omega_X(H + \Gamma) \arrow[rr] &  & a_*\omega_{\Gamma}(H|_{\Gamma}) \arrow[rr] &  & 0
    \end{tikzcd} \] on $A(X)$. In particular, we have that \[ V^0(A(X), a_*\omega_\Gamma(H|_{\Gamma}))\subseteq V^0(A(X), a_*\omega_X(H+\Gamma))\cup V^1(A(X),\cO_{A(X)}), \] so $0 \in V^0(A(X), a_*\omega_{\Gamma}(H|_{\Gamma}))$ is an isolated point by \autoref{lem:GVmorphism}.\autoref{lem:noZ}. The sheaf $a_*\omega_{\Gamma}(H|_{\Gamma})$ must therefore contain $\cO_{A(X)}$ as a direct summand (use the same argument as in the proof of \autoref{lem:GVmorphism}.\autoref{lem:GVmorphismitem}), so $\Gamma$ dominates $A(X)$. This contradicts \autoref{no_bad_component_dominates} by the same argument as in the first paragraph of the proof of \autoref{principal_polarization} (in the case $r = 0$, the fact that $\Gamma$ does not dominate $A(X)$ is automatic for dimensional reasons).
\end{proof}

\begin{corollary}[{\autoref{main_thm_intro_affine_case}}]
    Let $V$ be an affine smooth variety of maximal Albanese dimension, such that $\overline{P}_2(V) = 1$. Then $V$ is isomorphic to $\bG_m^{\dim(V)}$.
\end{corollary}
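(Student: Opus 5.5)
By \autoref{WWPB_mad}, the Albanese morphism $\alb_V \colon V \to \Alb(V)$ is an isomorphism away from a closed subset $C \inc \Alb(V)$ of codimension at least $2$. Concretely, there are open sets $V^\circ \inc V$ and $\Alb(V)^\circ = \Alb(V) \setminus C$ with $\alb_V \colon V^\circ \cong \Alb(V)^\circ$, and we may take $V^\circ = \alb_V^{-1}(\Alb(V)^\circ)$. The plan is to use affineness of $V$ twice:
\begin{itemize}
\item first to force the compact part of $\Alb(V)$ to vanish;
\item then to upgrade this isomorphism in codimension two to a genuine isomorphism, using Hartogs.
\end{itemize}
The easy direction is immediate: $\bG_m^n$ is affine, is its own Albanese variety, and its standard compactification $((\bP^1)^n, \Delta)$ has $\omega(\Delta)$ trivial, so $\overline{P}_2 = 1$.

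For the first point, note that $\bC[V] = H^0(V, \cO_V)$. Since $V \setminus V^\circ$ may be a divisor, I would not argue by restricting to $V^\circ$. Instead I use the morphism $\alb_V \colon V \to \Alb(V)$ directly. It is birational (by \autoref{birational_mad_case}) and, since $\dim \alb_V(V) = \dim V$, its image contains the big open set $\Alb(V)^\circ$. Pulling back functions gives an injection
\[ \bC[\Alb(V)] = H^0(\Alb(V)^\circ, \cO) \inc \bC(V), \]
where the first equality is Hartogs, using normality of $\Alb(V)$. Conversely, every regular function $\phi$ on $V$ restricts to $V^\circ \cong \Alb(V)^\circ$, so it defines a regular function on $\Alb(V)^\circ$. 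By Hartogs this extends to $\Alb(V)$. Hence $\bC[V] \cong \bC[\Alb(V)]$ compatibly with $\alb_V^*$.

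Now write $\Alb(V)$ as an extension of the abelian variety $A(X)$ by $\bG_m^r$. The ring $\bC[\Alb(V)]$ consists of functions constant along the $A(X)$-direction: the morphism $\Alb(V) \to \Spec \bC[\Alb(V)]$ is the affinization, whose fibres contain the translates of the maximal anti-affine subgroup. In particular $\bC[\Alb(V)]$ has Krull dimension at most $r$. Since $V$ is affine of dimension $n = \overline{q}(V) = \dim \Alb(V)$, we get $n = \dim \bC[V] \leq r \leq n$. Hence $r = n$, so $q(X) = 0$ and $\Alb(V) = \bG_m^n$ is affine.

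Then $\alb_V \colon V \to \bG_m^n$ is a morphism of affine varieties inducing an isomorphism on coordinate rings, by the previous paragraph. It is therefore an isomorphism.

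The step I expect to need the most care is justifying $\bC[\Alb(V)] \hookrightarrow \bC[V]$ and its surjectivity. One could worry that $V$ might contain points over $C$ or extra divisors contracted to $C$, so I should not assume $V \setminus V^\circ$ has codimension $2$. The argument above avoids this: it only uses that $V$ is normal and that $V^\circ$ is dense in $V$, so restriction of functions is injective.

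The other subtle input is the dimension bound $\dim \bC[\Alb(V)] \leq r$. It follows because any regular function on $\Alb(V)$ restricts to a regular function on each $A(X)$-torsor fibre of the (locally trivial in the analytic sense) structure. Concretely, via the standard compactification $Z \to A(X)$, such a function is constant on the compact translates of the torsion-free part of the group. I would therefore cite the structure of the affinization of a semi-abelian variety, or argue directly using \autoref{sequence_Lie_groups_split} together with the compactness of $A(X)$.
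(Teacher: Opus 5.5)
Your proof is correct, but it reaches $q(X)=0$ by a different route than the paper.

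The paper works on the compactification. From the proof of \autoref{WWPB_mad} it extracts that $g$ is birational with $D \leq g^*\Delta + E$ for some effective $g$-exceptional $E$. This gives
\[ \dim X = \kappa(X,D) \leq \kappa(Z,\Delta) = \overline{q}(V)-q(X), \]
where the equality $\kappa(X,D)=\dim X$ comes from the affineness of $V$. Only afterwards does it use the $S_2$ property of $\Alb(V)$ to get $a_{V,*}\cO_V=\cO_{\Alb(V)}$ and conclude.

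You instead do the Hartogs step first. You obtain $\bC[V]\cong\bC[\Alb(V)]$ using only the statement of \autoref{WWPB_mad}, and then reuse this single isomorphism twice: once for the dimension count against the affinization of $\Alb(V)$, and once for the final isomorphism of affine varieties. This is economical, and it avoids the boundary comparison $D\leq g^*\Delta+E$, which is not part of the statement of \autoref{WWPB_mad}. The price is that you need the structure of $\cO(\Alb(V))$. This is essentially the same input as $\kappa(Z,\Delta)=r$ in the paper, just phrased on the open part rather than on the compactification.

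When you write this up, justify that fact correctly. $A(X)$ is a quotient of $\Alb(V)$, not a subgroup, so restricting functions to ``$A(X)$-torsor fibres'' is not a valid argument. Two clean options:
\begin{itemize}
\item Observe that the maximal anti-affine subgroup of $\Alb(V)$ surjects onto $A(X)$. Indeed, the quotient of $A(X)$ by its image is a quotient of the affine group $\Alb(V)/\Alb(V)_{\mathrm{ant}}$, hence affine and complete, hence trivial. Therefore $\dim \Alb(V)/\Alb(V)_{\mathrm{ant}}\leq r$.
\item Decompose $\cO(\Alb(V))=\bigoplus_{\chi} H^0(A(X),L_\chi)$ over the characters $\chi$ of $\bG_m^r$, where each $L_\chi\in\Pic^0(A(X))$. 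Only characters with $L_\chi$ trivial contribute, so the transcendence degree is at most $r$.
\end{itemize}

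Also, $\Alb(V)^\circ$ lies in the image of $\alb_V$ because $V^\circ\cong\Alb(V)^\circ$, not because $\dim\alb_V(V)=\dim V$.
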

\begin{proof}
    We will use the same notations as previously in this section. By \autoref{WWPB_mad}, we know that $g \colon X \to Z$ is birational, and that $D \leq g^*\Delta + E$ for some effective $g$--exceptional divisor $E$. Hence, we have that $\kappa(X, D) \leq \kappa(Z, \Delta) = \overline{q}(V) - q(X)$. Since the complement $V$ of $D$ is affine, one sees that the rational map induced by $|mD|$ is an isomorphism on $V$ for $m \gg 0$, so $\kappa(X, D) = \dim(X)$. We have therefore proven that $q(X) = 0$, so $\Alb(V) = \bG_m^{\dim(V)}$. Since $\alb_V \colon V \to \Alb(V)$ is an isomorphism away from a closed subset of $\Alb(V)$ of codimension at least $2$ by \autoref{WWPB_mad}, it is not hard to see using the $S_2$-condition on $\Alb(V)$ that $a_{V, *}\cO_V = \cO_{\Alb(V)}$ (this is a general fact about such morphisms). Given that both $V$ and $\Alb(V)$ are affine, the result follows.
\end{proof}

\section{Compact part of relative dimension 1}\label{sec:largeq}

Our objective in this section is to prove \autoref{main_thm}. Our main strategy is to show that a smooth quasi-projective variety $V$ satisfying the assumption of \autoref{main_thm} has maximal Albanese dimension. The result then immediately follows from \autoref{main_thm_max_alb_dim}. The proof has two key points:
\begin{enumerate}
    \item we show first that, if the Albanese morphism of a good compactification $(X, D)$ of $V$ is an algebraic fiber space, then $a_V$ is dominant and $V$ has maximal Albanese dimension;
    \item successively we show that the Albanese morphism $a \colon  X\rightarrow \Alb(X)$ is an algebraic fiber space.
\end{enumerate}
The first step is  \autoref{cor:best_dominance_we_can_show_so_far}, which is framed in a more general set-up: we do not require that $\overline{q}(V)-q(X)\leq 1$, but we show in general that, if the Albanese morphism of $X$ is an algebraic fiber space with positive dimensional fibers then $\dim(a_V(V))>q(X)$. We hope that this generality will allow us, in the future, to give effective conditions on the plurigenera for a variety to be of maximal Albanese dimension even when $\overline{q}(V)-q(X)\geq 2$. The second step is \autoref{thm:fiberspace}, whose proof is divided in several lemmas after its statement.

We will first need the following general vanishing result:

\begin{proposition}\label{prop:general_kollar_vanishing}
    Let $f \colon X \to Y$ be a morphism of normal projective varieties, with $X$ smooth. Let $B$ be a $\bQ$--divisor on $X$ with coefficients in $[0, 1]$ that has snc support, and let $L$ be a Cartier divisor on $X$ such that $L - (K_X + B) \sim_{\bQ} f^*M$ with $M$ an effective $\bQ$--divisor on $Y$ which is not trivial. Then \[ H^g(Y, f_*\cO_X(L)) = 0, \] where $g \coloneqq \dim(Y)$.
\end{proposition}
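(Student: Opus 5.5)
The plan is to reduce the vanishing to an injectivity statement for multiplication by sections of multiples of $M$, and then to kill the resulting direct limit with Serre duality on $Y$. Throughout, $\mathscr{F} \coloneqq f_*\cO_X(L)$. Fix $k \geq 1$ such that $kM$ is Cartier, and let $s \in H^0(Y, \cO_Y(kM))$ be the section defining the effective Cartier divisor $kM$. It is non-zero and vanishes on the non-empty set $\Supp(M)$.

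\textbf{Step 1 (injectivity).} I want to show that for every $N \geq 0$, multiplication by $s$ induces injective maps
\[ H^g(Y, \mathscr{F}(NkM)) \longrightarrow H^g(Y, \mathscr{F}((N+1)kM)). \]
Since $L + Nk f^*M - (K_X+B) \sim_{\bQ} (Nk+1)f^*M$ is again the pullback of an effective divisor, it suffices to treat $N = 0$ uniformly. The tool I would use is Fujino's injectivity and torsion-freeness package \cite[Theorem 6.3]{Fujino_Fundamental_theorems_for_the_log_MMP}, applied with $B$ and $L$ as given and with the divisor $f^*(kM)$.
\begin{itemize}
    \item Part (i) of that theorem gives that every associated subvariety of $R^if_*\cO_X(L)$ is the image of a stratum of $(X, B^{=1})$.
    \item The injectivity theorem gives that $H^i(X, \cO_X(L)) \to H^i(X, \cO_X(L + kf^*M))$ is injective, provided that no log canonical stratum of $(X,B)$ is contained in $\Supp f^*M$.
\end{itemize}
Combined with the Leray spectral sequence, or more cleanly with the relative statement at the level of $Rf_*$, this should yield injectivity on $H^g(Y, \mathscr{F})$: the top cohomology of $\mathscr{F}$ is a quotient of $H^g(X,\cO_X(L))$ via the edge map, and everything is compatible with multiplication by $s$.

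\textbf{Main obstacle.} This step is where I expect the difficulty. The condition that no log canonical stratum lies over $\Supp(M)$ is not automatic. My first attempt would be to split off the strata mapping into $\Supp(M)$. Concretely, I would replace $B$ by $B' \coloneqq B - \sum E_i$, where the $E_i$ are the components of $B^{=1}$ whose image lies in $\Supp M$, and compare $\cO_X(L - \sum E_i) \subseteq \cO_X(L)$. I expect the cokernel to be supported over $\Supp M$, which has dimension at most $g-1$, so its pushforward contributes nothing to $H^g$. One then has to check that replacing $L$ by $L - \sum E_i$ and $B$ by $B'$ preserves the hypothesis up to adding an effective divisor supported over $\Supp M$. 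If this perturbation does not quite work, I would instead pass to the perturbation $B - \varepsilon f^*M$ used in the proof of \autoref{no_bad_component_dominates}.

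\textbf{Step 2 (conclusion by duality).} Iterating Step 1 gives an injection
\[ H^g(Y, \mathscr{F}) \hookrightarrow H^g(Y, \mathscr{F}(NkM)) \quad \text{for all } N \geq 0. \]
By Serre duality on the projective variety $Y$ of dimension $g$, with $\omega_Y$ its dualizing sheaf, we have
\[ H^g(Y, \mathscr{F}(NkM))^{\vee} \cong \Hom(\mathscr{F}, \omega_Y(-NkM)). \]
These spaces form a decreasing chain of subspaces of the finite-dimensional space $\Hom(\mathscr{F}, \omega_Y)$, so the chain stabilizes. Any homomorphism lying in all of them lands in $\bigcap_N \omega_Y(-NkM)$. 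This intersection is zero: $\omega_Y$ is torsion-free of rank one since $Y$ is normal, and Krull's intersection theorem applies at a point of $\Supp M \neq \emptyset$. Hence $H^g(Y, \mathscr{F}(NkM)) = 0$ for $N \gg 0$. The injection above then forces $H^g(Y, f_*\cO_X(L)) = 0$.
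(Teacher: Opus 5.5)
There is a genuine gap, and it sits in Step 1, which carries the whole content of the statement.

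\textbf{Step 2 proves nothing beyond a formality.} Your Step 2 is correct, but it is unconditional. For any coherent sheaf on $Y$ and any non-zero effective Cartier divisor, your duality-plus-Krull argument gives $H^g(Y, \mathscr{F}(NkM)) = 0$ for $N \gg 0$, whether or not Step 1 holds. Consequently Step 1, for all $N$, is equivalent to the proposition itself.

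\textbf{The injectivity theorem does not apply.} The Ambro--Fujino injectivity theorem you invoke requires $L - (K_X+B)$ to be \emph{semi-ample}. Here $L - (K_X+B) \sim_{\mathbb{Q}} f^*M$ with $M$ merely effective. For instance, take $f = \mathrm{id}$, $B = 0$, and $M$ a curve of negative self-intersection on a surface. The lc-strata condition you single out as the ``main obstacle'' is the harmless part: removing the relevant components preserves the hypotheses, and $H^g(Y, f_*\mathcal{O}_X(L-E)) \to H^g(Y,\mathscr{F})$ is surjective. The real issue is the missing positivity, and neither of your fallback perturbations creates any.

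\textbf{What 6.3.(i) actually gives.} Part 6.3.(i) of \cite{Fujino_Fundamental_theorems_for_the_log_MMP} only needs $f$-semi-ampleness, which does hold. It tells you that $s$ acts injectively on the \emph{sheaves} $R^qf_*\mathcal{O}_X(L)$. But from $0 \to \mathscr{F} \to \mathscr{F}(kM) \to \mathscr{F}(kM)|_{kM} \to 0$, the kernel of $H^g(Y,\mathscr{F}) \to H^g(Y,\mathscr{F}(kM))$ is the cokernel of $H^{g-1}(Y,\mathscr{F}(kM)) \to H^{g-1}(\mathscr{F}(kM)|_{kM})$. That is a global question which sheaf-level injectivity does not touch.

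\textbf{The Leray transfer is backwards.} The edge map goes from $H^g(Y,\mathscr{F})$ to $H^g(X,\mathcal{O}_X(L))$, not the other way. What you would need is that this map is injective, e.g.\ from a Koll\'ar-type splitting of $Rf_*\mathcal{O}_X(L)$. Injectivity upstairs says nothing about a quotient.

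\textbf{How the paper avoids the problem.} The paper never uses positivity of $M$ except where it is automatic. After a Stein factorization it assumes $f_*\mathcal{O}_X = \mathcal{O}_Y$ and inducts on $g$.
\begin{itemize}
\item For $g=1$, a non-trivial effective divisor on a curve is ample, so Theorem 6.3.(ii) of \cite{Fujino_Fundamental_theorems_for_the_log_MMP} gives the vanishing.
\item For $g \geq 2$, it cuts by a general, sufficiently ample $H \subset Y$. Then $D = f^*H$ is smooth, and adjunction reproduces the hypotheses for $D \to H$ with $L' = (L+D)|_D$ and $M|_H$ still effective and non-trivial.
\item Since $H$ avoids the associated points of $R^1f_*\mathcal{O}_X(L)$, the sequence $0 \to f_*\mathcal{O}_X(L) \to f_*\mathcal{O}_X(L)\otimes\mathcal{O}_Y(H) \to f_*\mathcal{O}_D(L') \to 0$ stays exact.
\item $H^g(Y, f_*\mathcal{O}_X(L))$ is then squeezed between $H^{g-1}(H, f_*\mathcal{O}_D(L')) = 0$ (induction) and $H^g(Y, f_*\mathcal{O}_X(L)\otimes\mathcal{O}_Y(H)) = 0$ (Serre vanishing).
\end{itemize}
So the paper also uses sheaf-level injectivity, but for the auxiliary general ample divisor $H$, where it is automatic. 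The global input is Serre vanishing for $H$, not any injectivity for $M$.
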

\begin{proof}
    If $f$ is not surjective, there is nothing to prove. Therefore we may assume that $f$ is surjective, and we may in fact further assume that $f_*\cO_X = \cO_Y$. We will show the result by induction on $g = \dim(Y)$.

    If $g = 1$, then $M$ is automatically ample, so the result follows from \cite[Theorem 6.3.2]{Fujino_Fundamental_theorems_for_the_log_MMP}. For $g \geq 2$, take a general and ample enough hypersurface section $H$ on $Y$, and set $D \coloneqq f^*H$. Since $f_*\cO_X = \cO_Y$, $D$ is a general member of a semi--ample linear system on $X$, so it is smooth. The restriction $B_D$ of $B$ on $D$ still has snc support, so if we set $L' \coloneqq (L + D)|_D$, we obtain by adjunction that $L' - (K_D + B_D) \sim_{\bQ} f^*M_H$, where $M_H \coloneqq M|_H$ (which is still effective and non--trivial). Consider the exact sequence 
     \[ \begin{tikzcd}
        0 \arrow[rr] &  & \cO_X(L) \arrow[rr] &  & \cO_X(L + D) \arrow[rr] &  & \cO_D(L') \arrow[rr] &  & 0.
    \end{tikzcd} \]

    Then pushing forward to $Y$ gives 
    \[ \begin{tikzcd}
        0 \arrow[rr] &  & f_*\cO_X(L) \arrow[rr] &  & f_*\cO_X(L)\otimes\cO_Y(H) \arrow[rr] &  & f_*\cO_D(L') \connecting{dllll}{""} \\
        && R^1f_*\cO_X(L) \arrow[rr] & & R^1f_*\cO_X(L) \otimes \cO_Y(H) \arrow[rr] && \dots
    \end{tikzcd} \] Since $H$ is general and therefore avoids the associated points of $R^1f_*\cO_X(L)$, we obtain that $R^1f_*\cO_X(L) \to R^1f_*\cO_X(L) \otimes \cO_Y(H)$ is injective (this map corresponds locally to multiplying by an equation cutting out $H$). We then deduce that $H^g(Y, f_*\cO_X(L)) = 0$ by induction and Serre vanishing.
\end{proof}

Throughout, fix a smooth variety $V$ with good compactification $(X, D)$ such that $\overline{P_1}(V) = \overline{P_2}(V) = 1$. Set $g \coloneqq \dim(A(X))$. If $\overline{q}(V)=q(X)$, then it follows from \autoref{lem:GVmorphism}.\autoref{lem:GVmorphismitem} that $V$ (and even $X$) has maximal Albanese dimension, so let us assume instead that $\dim(A(V)) > \dim(A(X))$. Let $Z$ be the standard compactification of $A(V)$, so that the induced map $\pi \colon Z \to A(X)$ is a $(\bP^1)^r$--bundle. Up to blowing up $X$ away from $V$, we are in the situation of \autoref{eq:setupdiagram}.
\begin{comment}
     \begin{equation}\label{eq:setupdiag}
         \begin{tikzcd}
    V \arrow[r, hook] \arrow[d, "a_V"] & X \arrow[d, "g"'] \arrow[dd, "a", bend left = 49] \\
    A(V) \arrow[r, hook]        & Z \arrow[d, "a_Z"']                          \\
                            & A(X).            
    \end{tikzcd}
\end{equation}
\end{comment}
    
    As usual, we set $\Delta \coloneqq Z \setminus A(V)$. We fix a way to write $a_Z \colon Z \to A(X)$ as a sequence of $\bP^1$--bundles $Z \to \dots \to Z_1 \to A(X)$, we let $\Delta_1$ denote the natural boundary of the $\bP^1$--bundle $Z_1 \to A(X)$ induced by $\Delta$, and we let $f_1 \colon X \to Z_1$ denote the induced morphism.

\begin{lemma}\label{lem:image_of_X_avoids_Delta}
    With the set-up above, assume that $f_1$ is not surjective. Then $f_1(X)$ does not intersect $\Delta_1$. 
\end{lemma}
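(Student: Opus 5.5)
Throughout I use the standing assumptions $\overline{P}_1(V)=\overline{P}_2(V)=1$. The plan is to argue by contradiction: assuming $f_1(X)\cap\Delta_1\neq\emptyset$, I will produce a vanishing $H^g(A(X),a_*\omega_X(D))=0$ via \autoref{prop:general_kollar_vanishing}. This contradicts \autoref{lem:GVmorphism}.\autoref{lem:GVmorphismitem}, which applies to $b=a$ the Albanese morphism of $X$ and gives $H^g(A(X),a_*\omega_X(D))\neq 0$.

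\textbf{Step 1: the image $f_1(X)$.} First I determine the shape of $f_1(X)$. By \autoref{lem:GVmorphism}.\autoref{lem:GVmorphismitem}, $a$ is surjective, and hence so is $Z_1\cap f_1(X)\to A(X)$. Since $f_1$ is not surjective and $\dim Z_1=g+1$, the image $Y_0\coloneqq f_1(X)$ is an irreducible subvariety of dimension exactly $g$. The projection $Y_0\to A(X)$ is therefore generically finite.

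I then take the Stein factorization $X\xrightarrow{\,h\,}Y\xrightarrow{\,\nu\,}Y_0$. Here $Y$ is a normal projective variety and $h_*\cO_X=\cO_Y$. The composite $p\colon Y\to A(X)$ is generically finite and surjective. If $p$ fails to be finite, I would instead apply the Leray argument below to $Y\to A(X)$ and check that $H^g$ still vanishes; I expect this to be harmless because the relevant fibres have dimension $\le g-1$.

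\textbf{Step 2: the boundary divisor on $Y$.} Suppose $Y_0$ meets $\Delta_1$. Since $V$ maps into $A(V)$, and hence into $Z_1\setminus\Delta_1$, the variety $Y_0$ is not contained in $\Delta_1$. As $\Delta_1$ is Cartier, $\Delta_1|_{Y_0}$ is then a nonzero effective Cartier divisor. Its pullback $M\coloneqq \nu^*(\Delta_1|_{Y_0})$ is a nonzero effective Cartier divisor on $Y$.

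Moreover, $h^{-1}(\operatorname{Supp} M)=f_1^{-1}(\Delta_1)\subseteq X\setminus V=D$. Hence $\operatorname{Supp}(h^*M)\subseteq D$, and for $0<\varepsilon\ll1$ the $\bQ$-divisor $B\coloneqq D-\varepsilon h^*M$ has coefficients in $[0,1]$ and snc support.

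\textbf{Step 3: the vanishing and the contradiction.} Set $L\coloneqq K_X+D$. Then $L-(K_X+B)=\varepsilon h^*M$, with $\varepsilon M$ effective and nontrivial on $Y$. \autoref{prop:general_kollar_vanishing} therefore yields
\[ H^g(Y,h_*\omega_X(D))=0. \]
Since $p$ is finite (or, if not, by the remark in Step 1), we have $p_*h_*\omega_X(D)=a_*\omega_X(D)$ with no higher direct images in the relevant degree. Consequently
\[ H^g(A(X),a_*\omega_X(D))=H^g(Y,h_*\omega_X(D))=0, \]
which is the desired contradiction.

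\textbf{Expected obstacle.} The main delicate point is Step 2, namely checking carefully that $\operatorname{Supp}(h^*M)\subseteq D$ so that $B$ is a legitimate boundary, and that $M$ is genuinely nonzero. This rests on $Y_0\not\subseteq\Delta_1$ and on $\Delta_1$ being Cartier on $Z_1$. The finiteness issue for $Y\to A(X)$ in Step 1 is the secondary technical point.
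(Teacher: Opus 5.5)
\textbf{Verdict: genuine gap.} Your overall strategy is the paper's: argue by contradiction, apply \autoref{prop:general_kollar_vanishing} on the Stein factorization (or normalization) $Y$ of $f_1(X)$, and contradict \autoref{lem:GVmorphism}.\autoref{lem:GVmorphismitem}. Steps 1 and 2 are fine. The gap is the final transfer
\[ H^g(Y,h_*\omega_X(D))=0 \ \Longrightarrow\ H^g(A(X),a_*\omega_X(D))=0, \]
which you only justify when $p\colon Y\to A(X)$ is finite.

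\textbf{Why finiteness of $p$ cannot be assumed.} $f_1(X)$ is a $g$-dimensional subvariety of the $\mathbb{P}^1$-bundle $Z_1\to A(X)$. It may contain whole $\mathbb{P}^1$-fibres over a closed subset $S\subset A(X)$; irreducibility and surjectivity only give $\dim S\le g-2$. A bound on fibre dimensions kills $R^ip_*$ for $i\ge 2$, but not $R^1p_*$, which is supported over $S$. In the Leray spectral sequence, the vanishing of $H^g(Y,h_*\omega_X(D))$ only shows that $H^g(A(X),p_*h_*\omega_X(D))$ equals the image of
\[ d_2\colon H^{g-2}(A(X),R^1p_*h_*\omega_X(D))\to H^g(A(X),p_*h_*\omega_X(D)). \]
The source of $d_2$ need not vanish when $\dim S=g-2$. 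So the remark that ``the relevant fibres have small dimension'' does not close the argument.

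\textbf{The missing idea is relative ampleness.}
\begin{itemize}
\item $\Delta_1$ is ample relative to $Z_1\to A(X)$, and $Y\to Z_1$ is finite, so $M$ is $p$-ample.
\item You have $K_X+D-(K_X+B)=\varepsilon h^*M$, with $B$ a boundary with snc support.
\item Fujino's relative vanishing \cite[Theorem 6.3.(ii)]{Fujino_Fundamental_theorems_for_the_log_MMP} then gives $R^ip_*(h_*\omega_X(D))=0$ for all $i>0$.
\item Leray now yields $H^g(A(X),a_*\omega_X(D))\cong H^g(Y,h_*\omega_X(D))=0$, whether or not $p$ is finite.
\end{itemize}
This is exactly how the paper concludes.

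Note also that the point you flagged as the main obstacle (Step 2, that $B$ is a legitimate boundary and $M\neq 0$) is routine. The real subtlety is this vanishing of higher direct images along $Y\to A(X)$.
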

\begin{proof}
    Assume by contradiction that $f_1$ is not surjective and that $f_1(X) \cap \Delta_1 \neq \emptyset$. Let $Y$ denote the normalization of $f_1(X)$, and let $M \coloneqq \Delta_1|_Y$.  Note that $Y \not\subseteq \Delta_1$, since $A(V)$ maps to the complement of $\Delta_1$. In particular, $M$ is an effective divisor of $Y$, and it is non-trivial since $f_1(X) \cap \Delta_1 \neq \emptyset$. Since $f_1^{-1}(M) \inc D$ by construction, there exists a rational number $\varepsilon > 0$ such that $f_1^*(\varepsilon M) \leq D$. Set $B \coloneqq D - \varepsilon f_1^*(M)$, so that $K_X + D \sim_{\bQ} K_X + B + \varepsilon f_1^*(M)$. We can therefore appeal to \autoref{prop:general_kollar_vanishing} to obtain that \[ H^g(Y, f_{1, *}\omega_X(D)) = 0 \] (recall that since $f_1$ is not surjective by assumption, $\dim(f_1(X)) = \dim(A(X)) = g$). Note that since $\Delta_1$ is $\pi_1$--ample and $Y \to Z_1$ is finite, $M$ is also $\pi_1$--ample. We therefore obtain by \cite[Theorem 6.3.(ii)]{Fujino_Fundamental_theorems_for_the_log_MMP} that \[ R^i\pi_{1, *}(f_{1, *}\omega_X(D)) = 0, \] for all $i > 0$, so the Leray spectral sequence gives us that \[ H^g(A(X), a_*\omega_X(D)) = 0. \]
    This is a contradiction, by \autoref{lem:GVmorphism}.\autoref{lem:GVmorphismitem}.
\end{proof}
    
\begin{proposition}\label{cor:best_dominance_we_can_show_so_far}
    Assume that $a_*\cO_X = \cO_{A(X)}$. Then $f_1 \colon X \to Z_1$ is surjective. In particular, $\dim(f(X)) > \dim(A(X))$. 
\end{proposition}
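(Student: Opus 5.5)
We argue by contradiction: assume $f_1 \colon X \to Z_1$ is not surjective. By \autoref{lem:image_of_X_avoids_Delta}, the image $f_1(X)$ is then a closed subvariety of $Z_1$ disjoint from $\Delta_1$. Since $f_1$ is not surjective and $f_1(X)$ dominates $A(X)$ (as $a$ is surjective by \autoref{lem:GVmorphism}.\autoref{lem:GVmorphismitem}), we get $\dim f_1(X) = g$. Hence $f_1(X) \to A(X)$ is generically finite and surjective.

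The first step is to use $a_*\cO_X = \cO_{A(X)}$ to show that $f_1(X) \to A(X)$ is birational. Indeed, the Stein factorization of $a$ is trivial and $a$ factors through $f_1(X)$ (or its normalization $Y$). So $Y \to A(X)$ is finite onto a normal variety with connected fibers, hence an isomorphism. We thus obtain a section $s \colon A(X) \to Z_1$ of the $\bP^1$-bundle $\pi_1 \colon Z_1 \to A(X)$, with image $f_1(X)$ disjoint from $\Delta_1 = \Delta_1^0 + \Delta_1^\infty$, the two disjoint boundary sections.

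The second step is to derive a contradiction from the existence of such a section. The complement $Z_1 \setminus \Delta_1$ is a $\bG_m$-torsor $G_1$ over $A(X)$, namely the semi-abelian quotient of $A(V)$ of dimension $g+1$, which is a non-split extension unless it is trivial. A section $A(X) \to G_1$ avoiding the boundary gives a morphism of varieties $A(X) \to G_1$. By rigidity, after translation this is a group homomorphism splitting $G_1 \to A(X)$, so $G_1 \cong A(X) \times \bG_m$ as groups. Composing $a_V$ with the projection $G_1 \to \bG_m$ then gives a morphism $V \to \bG_m$ which, since $f_1$ factors through the section, is constant. This contradicts the universal property of the Albanese: $V$ generates $A(V)$, hence generates $G_1$, so its image cannot lie in a translate of the proper subgroup $A(X)\times\{1\}$. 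Put more directly, $f_1|_V$ lands in a translate of the section, which is a proper algebraic subgroup coset of $G_1$, yet the image of $V$ must generate $G_1$. This step is the main point to check: one must verify that the image of the section is a coset of a subgroup. Here rigidity for morphisms from abelian varieties to semi-abelian varieties does the work, since any such morphism is a homomorphism composed with a translation, because $\cO(A(X))^* = \bC^*$.

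Finally, the last statement follows immediately. Since $f = \theta\circ\ldots$ factors as $X \to Z \to Z_1$ and $f_1$ is surjective, we get $\dim f(X) \geq \dim Z_1 = g+1 > \dim A(X)$.
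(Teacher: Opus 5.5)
Your proof is correct, but after the shared first step it takes a different route from the paper. Both arguments use \autoref{lem:image_of_X_avoids_Delta} to place $f_1(X)$ inside $G_1 \coloneqq Z_1\setminus\Delta_1 = A(V)/\bG_m^{r-1}$. Both then use $a_*\cO_X=\cO_{A(X)}$ to see that $f_1(X)$ is birational to $A(X)$.

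The paper concludes by counting Betti numbers. The map $H^1(G_1,\bC)\to H^1(A(V),\bC)\cong H^1(V,\bC)$ is injective, by \autoref{sequence_Lie_groups_split} and Fujino's comparison. Hence $H^1(G_1,\bC)\to H^1(Y,\bC)$ is injective for a resolution $Y$ of $f_1(X)$, whereas $h^1(Y)=h^1(A(X))=h^1(G_1)-1$.

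You instead upgrade birationality to an isomorphism $f_1(X)\cong A(X)$, which gives an honest section $s\colon A(X)\to G_1$. Rigidity makes $s(A(X))$ a coset of a $g$-dimensional subgroup. This contradicts the fact, coming from uniqueness in the universal property, that $\alb_V(V)$ generates $A(V)$, so its image generates $G_1$. Your argument is purely algebraic and avoids the topological input, but it needs the stronger statement that $Y\to A(X)$ is an isomorphism.

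The finiteness you assert at that step should be justified. Every fiber of $\pi_1$ meets $\Delta_1$, and $f_1(X)\cap\Delta_1=\emptyset$, so $f_1(X)\to A(X)$ is proper and quasi-finite, hence finite. Then $b_*\cO_Y=\cO_{A(X)}$ with $b$ finite gives $Y\cong A(X)$, and $f_1(X)\cong A(X)$ follows.

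Your route actually yields more than the paper's, because the section and rigidity can be skipped entirely. Once $f_1(X)$ is a complete subvariety of $G_1$, set $W\coloneqq f_1(X)-p$ for some $p\in f_1(X)$. The iterated sums $W+\dots+W$ are closed, irreducible and increasing, so they stabilize. The limit is a closed connected subgroup that is complete, i.e.\ an abelian subvariety $K$. Moreover $K\neq G_1$, since $G_1$ contains $\bG_m$. Hence the image of $V$ lies in the coset $p+K$ of a proper subgroup, which gives the same contradiction without ever using $a_*\cO_X=\cO_{A(X)}$.

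In the paper's argument, that hypothesis is used precisely to compute $h^1(Y)$. For $r=1$, the streamlined argument would already give dominance of $\alb_V$ in the setting of \autoref{main_thm}, without appealing to \autoref{thm:fiberspace}.
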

\begin{proof}
    Assume by contradiction that $f_1$ is not surjective, and let $Y$ be a resolution of $f_1(X)$. Let $\bG_m^r \coloneqq \ker(A(V) \to A(X))$, so that $Z_1 \setminus \Delta_1 = A(V)/\bG_m^{r - 1}$ by construction (recall that $r \geq 1$ by hypothesis). Since we are assuming that $f_1$ is not surjective, \autoref{lem:image_of_X_avoids_Delta} ensures that the morphism $Y\rightarrow Z_1$ factors through $A(V)/\bG_m^{r-1}$. Then up to blowing up $X$, we have  the following diagram:
    \[ \begin{tikzcd}
        V \arrow[dd] \arrow[rrrr] &  &                                    &                        & X \arrow[dd, "a"] \arrow[ldd, "f_1"'] \arrow[lld, "g"'] \\
        &  & Y \arrow[dr] \arrow[d]                       &                      &                  \\
        A(V) \arrow[rr]           &  & A(V)/\bG_m^{r - 1} \arrow[r, hook] & Z_1 \arrow[r, "\pi_1"] & A(X).                   
    \end{tikzcd} \]
    Since $H^1(A(V)/\bG_m^{r - 1}, \bC) \to H^1(A(V), \bC)$ is injective (this follows for example from \autoref{sequence_Lie_groups_split}) and $H^1(A(V), \bC) \to H^1(V, \bC)$ is an isomorphism (see \cite[Lemma 3.11]{Fujino_On_quasi_Albanese_maps}), the composition of these two maps is also injective. We then obtain that the map $H^1(A(V)/\bG_m^{r - 1}, \bC) \to H^1(Y, \bC)$ must be injective too.

    Let us show that $h^1(Y, \bC) < h^1(A(V)/\bG_m^{r - 1}, \bC)$, thereby leading to a contradiction. Let $b \colon Y \to A(X)$ denote the composition $Y \to Z_1 \to A(X)$ above. Since $a_*\cO_X = \cO_{A(X)}$, we must also have that $b_*\cO_Y = \cO_{A(X)}$. Given that $Y$ and $A(X)$ have the same dimension by assumption, we conclude that $b$ is birational. In particular, $H^1(Y, \bC) \cong H^1(A(X), \bC)$, so \[ h^1(Y, \bC) = h^1(A(X), \bC) = h^1(A(V)/\bG_m^{r - 1}, \bC) - 1 \] (use e.g. \autoref{sequence_Lie_groups_split}).
\end{proof}

Note that in \autoref{cor:best_dominance_we_can_show_so_far}, we asked that the Albanese morphism of $X$ was an algebraic fiber space. Thus, in order to apply this result to prove \autoref{main_thm}, we must show this property in our case. This is the content of the following theorem:

\begin{thm}\label{thm:fiberspace}
    Let $X$ be a smooth $n$-dimensional projective variety, with $q(X) = n - 1$. Suppose that there is a reduced snc divisor $D$ on $X$ such that
    \[h^0(X,\omega_X(D))=h^0(X,\omega_X(D)^{\otimes 2})=1.\] 
    Then the Albanese morphism $a \colon X \to \Alb(X)$ of $X$ is an algebraic fiber space.
\end{thm}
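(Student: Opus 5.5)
The plan is to argue by contradiction through the Stein factorization of $a$. By \autoref{lem:GVmorphism}.\autoref{lem:GVmorphismitem}, $a$ is surjective. Since $q(X)=n-1$, the general fiber of $a$ is then a (possibly disconnected) curve. Let
\[ \begin{tikzcd} X \arrow[r, "h"] & Y \arrow[r, "t"] & A(X) \end{tikzcd} \]
be the Stein factorization, so that $Y$ is normal, $t$ is finite surjective and $h$ is a fibration with general fiber a connected curve $F$. We must show that $t$ is an isomorphism.

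First we rule out any étale part of $t$. If $t$ factored through a nontrivial isogeny $A'\to A(X)$, then $a$ would factor through $A'$, contradicting the universal property of the Albanese (equivalently, the injectivity of $a^*$ on $\Pic^0$). Next we record what $\overline{P}_1=1$ says about the fiber. Write $D|_F$ for the horizontal part of $D$ restricted to $F$. Since $h^0(X,\omega_X(D))=1$ and $a_*\omega_X(D)$ has a nonzero unipotent summand by \autoref{lem:GVmorphism}.\autoref{lem:GVmorphismitem}, the sheaf $h_*\omega_{X/Y}(D)$ is nonzero. Hence $\deg(K_F+D|_F)\geq 0$: either $g(F)\geq 1$, or $F\cong\bP^1$ and $D$ meets $F$ in at least two points. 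In particular the torsion-free sheaf $\mathscr{F}\coloneqq h_*\omega_X(D)$ is nonzero on $Y$. Moreover $\mathscr{F}$ contains $\omega_Y\otimes h_*\omega_{X/Y}(D)$ generically, and $h_*\omega_{X/Y}(D)$ is weakly positive by Fujino/Kawamata-type semipositivity for log canonical pairs.

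The core of the argument is then a Chen--Hacon/Pareschi type contradiction on $A(X)$, applied to $t_*\mathscr{F}=a_*\omega_X(D)$ instead of $t_*\omega_Y$. Suppose $t$ is not birational. Since $t$ has no étale part, Kawamata's theorem (or Ein--Lazarsfeld) shows that $Y$ is of general type in some direction, and hence that $V^0(A(X),t_*\omega_Y)$ has a positive-dimensional component through the origin. I will transport this component $W$ into $V^0(A(X),a_*\omega_X(D))$, using the generic inclusion $\omega_Y\otimes h_*\omega_{X/Y}(D)\inc\mathscr{F}$ together with the fact that $h_*\omega_{X/Y}(D)$ is generically globally generated up to twisting by $\Pic^0$ (possibly after passing to a suitable étale cover of $A(X)$). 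This produces a positive-dimensional subvariety $W$ with $W\cup(-W)\subseteq V^0(A(X),a_*\omega_X(D))$. That contradicts \autoref{lem:GVmorphism}.\autoref{lem:noZ}, which was derived from $\overline{P}_2=1$ through the multiplication map $H^0(\omega_X(D)\otimes\alpha)\otimes H^0(\omega_X(D)\otimes\alpha^{-1})\to H^0(\omega_X(D)^{\otimes 2})$.

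The main obstacle is this transport step: showing that positivity of $\omega_Y$ (ramification of $t$) actually produces twisted sections of $\omega_X(D)$ and not merely of $\omega_Y$. The twisted relative sheaf $h_*\omega_{X/Y}(D)$ need not have sections. What I would try first is an adaptation of \cite[Theorem 3.4.5]{Baudin_Effective_Characterization_of_ordinary_abelian_varieties} and \cite[Theorem 3.1]{Jiang_An_effective_version_of_a_thm_of_Kawamata_on_the_Albanese_map}. Namely, I would decompose $t_*\mathscr{F}$ via \autoref{thm:equivalent GV} and \autoref{FM_and_cohom}, and use that the unipotent summand at the origin (from \autoref{lem:GVmorphism}) is isolated. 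The argument would then run through the branch divisor of $t$, in the spirit of \autoref{principal_polarization}: the ramification of $t$ forces $R^0$-sections along $t$-ramified components, and these contradict \autoref{lem:GVmorphism}.\autoref{lem:noZ} or the bound $h^0(X,\omega_X(D)^{\otimes 2})=1$ directly. The degenerate case $F\cong\bP^1$ with exactly two boundary points, where $h_*\omega_{X/Y}(D)$ is a torsion line bundle, has to be handled separately. There I would argue that $h_*\omega_X(D)\cong\omega_Y\otimes L$ with $L$ torsion, so that Chen--Hacon applies to $Y$ essentially verbatim.
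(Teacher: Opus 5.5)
Your proposal has a genuine gap at exactly the step you flag as the main obstacle. That step carries the whole content of the theorem, and the mechanism you suggest for it cannot work. A generic inclusion $\omega_Y\otimes h_*\omega_{X/Y}(D)\subseteq h_*\omega_X(D)$ turns a section of $\omega_Y\otimes\alpha$ into a section of $\omega_X(D)\otimes\alpha$ only after multiplying by a global section of $h_*\omega_{X/Y}(D)$. Nothing forces such a section to exist: weak positivity only controls $S^{ab}(\cdot)\otimes H^{\otimes b}$ for an ample $H$. If you only get sections after a twist by some $\beta\in\Pic^0(A(X))$, you land $W+\beta$ and $-W+\beta$ in $V^0(A(X),a_*\omega_X(D))$. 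That is not a pair of the form $Z,-Z$, so \autoref{lem:GVmorphism}.\autoref{lem:noZ} gives no contradiction. Passing to an \'etale cover of $A(X)$ does not help either, since it destroys the hypothesis $h^0(\omega_X(D))=h^0(\omega_X(D)^{\otimes 2})=1$. The branch-divisor idea is never turned into an argument. The degenerate case also does not reduce to Chen--Hacon: from $h_*\omega_X(D)\cong\omega_Y\otimes L$ you would only learn $h^0(\omega_Y\otimes L)=1$ and $h^0(\omega_Y^{\otimes 2}\otimes L^{\otimes 2})\leq 1$, not $P_1(Y)=P_2(Y)=1$, and the same twist problem reappears. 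On the $Y$ side, the input you need is just \autoref{Pareschi_result} applied to a smooth model of the Stein factorization, which gives a positive-dimensional $Z$ with $Z,-Z\subseteq V^0(b_*\omega_Y)$; Kawamata's theorem is not needed.

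The missing idea is to get $\omega_Y$ into $a_*\omega_X(D)$ through the \emph{horizontal boundary components} rather than through the fibres. This is done via genuine short exact sequences of GV-sheaves on $A(X)$, so that no twist ever appears.
\begin{enumerate}
\item The paper first disposes of $P_1(X)>0$ by Jiang's theorem, so that $D\neq 0$. It then uses the Fourier--Mukai argument of \autoref{lem:samelogpg} to discard the vertical part of $D$.
\item Now every component $D_i$ is generically finite over the smooth model $Y$ of the Stein factorization, so $\omega_Y$ is a trace summand of $\pi_*\omega_{D_i}$.
\item \autoref{hodge_module_lemma} gives $R^1\pi_*\omega_X\cong\omega_Y$. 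The connecting map $\pi_*\omega_{D_i}\to\omega_Y$ is split surjective because $\Hom(\pi_*\omega_{D_i},\omega_Y)\cong\bC$. This yields $R^ja_*\omega_X(D_i)=0$ for $j>0$.
\item If $D$ has two components, you get the exact sequence $0\to a_*\omega_X(D_1)\to a_*\omega_X(D)\to a_*\omega_{D_2}(D_1|_{D_2})\to 0$.
\item If $D$ is irreducible, you get $0\to a_*\omega_X\to a_*\omega_X(D)\to b_*\mathcal{R}\to 0$, where $\mathcal{R}=\ker(\pi_*\omega_D\to R^1\pi_*\omega_X)$ and $b_*\omega_Y$ is a direct summand of $b_*\mathcal{R}$. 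This case uses $H^0(X,\omega_X)=0$ and the Hacon--Popa--Schnell splitting.
\end{enumerate}
In both cases $V^0(b_*\omega_Y)\subseteq V^0(a_*\omega_X(D))$ with no twist. Then \autoref{lem:GVmorphism}.\autoref{lem:noZ} together with \autoref{Pareschi_result} forces $b$ to be birational.
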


\begin{proof}
The proof of this result will consist of several steps. First, it follows from \autoref{lem:GVmorphism}.\autoref{lem:GVmorphismitem} that the Albanese morphism $a \colon X \to A(X)$ is surjective. If $P_1(X)>0$, then our assumptions on the logarithmic plurigenera imply that $P_1(X)=P_2(X)=1$, so the result follows directly from \cite[Theorem 3.1]{Jiang_An_effective_version_of_a_thm_of_Kawamata_on_the_Albanese_map}.

For the rest of the proof, we will therefore assume that $P_1(X)=0$. In particular, we have that $D\neq 0$. Let us write $D=D_v+D_h$, where $D_h$ is the sum of components of $D$ that dominate $A(X)$ through $a$.
\begin{lemma}\label{lem:samelogpg}
    We have that $H^0(X,\omega_X(D_h)) \neq 0$. 
\end{lemma}
\begin{proof}
    Consider the short exact sequence
$$
\begin{tikzcd}
0 \arrow[rr] &  & \omega_X(D_h) \arrow[rr] &  & \omega_X(D) \arrow[rr] &  & \omega_{D_v}(D_h|_{D_v}) \arrow[rr] &  & 0,
\end{tikzcd}
$$
and set $\cT \coloneqq a_*\omega_{D_v}(D_h|_{D_v})$ (note that this sheaf is torsion by construction).  We can then pushforward the sequence above and obtain a long exact sequence
\[ \begin{tikzcd}
    0 \arrow[r] & a_*\omega_X(D_h) \arrow[r,"\phi"]   & a_*\omega_X(D) \arrow[r] & \cT \arrow[r] & R^1a_*\omega_X(D_h)\arrow[r]  & \dots
\end{tikzcd} \]
By \autoref{lem:GVmorphism}.\autoref{lem:GVmorphismitem}, there exists a nonzero morphism $\psi \colon a_*\omega_X(D)\rightarrow \mathcal{O}_{A(X)}$. The composition $\psi\circ\phi$ cannot be zero, as otherwise we would obtain a non--zero morphism $\cT'\rightarrow \mathcal{O}_{A(X)}$ for some submodule $\cT' \inc \cT$ (which is impossible since $\cT$ is torsion). Thus we obtained a non-zero morphism of sheaves $\omega_X(D_h)\rightarrow \mathcal{O}_{A(X)}$. Applying the symmetric Fourier--Mukai transform gives a non--zero morphism $\mathbb{C}(0)\rightarrow \FM_A(a_*\omega_X(D_h))$. This shows that $0 \in \bighat{A(X)}$ is in the support of the sheaf $\FM_A(a_*\omega_X(D_h))$, so \autoref{FM_and_cohom} gives us that $H^0(A(X), a_*\omega_X(D_h)) \neq 0$.
 \end{proof}

Thanks to \autoref{lem:samelogpg}, we have that $h^0(X,\omega_X(D_h)) = h^0(X,\omega_X(D_h)^{\otimes 2}) = 1$, so we reduced to the case where $D=D_h$. Now, we consider a smooth model of the Stein factorization of $a$, that is, we have the following diagram

\[ \begin{tikzcd}
X \arrow[rr, "\pi"] \arrow[rrd, "a"'] &  & Y \arrow[d, "b"] \\
 &  & A(X)             
\end{tikzcd} \] with $Y$ smooth, $\pi_*\cO_X = \cO_Y$ and $b$ generically finite. Write $D=\sum_{i \in I} D_i$, and consider the adjoint sequence associated to a component $D_i$ of $D$:
\[ \begin{tikzcd}
    0 \arrow[rr] &  & \omega_X \arrow[rr] &  & \omega_X(D_i) \arrow[rr] & & \omega_{D_i} \arrow[rr] & & 0.
\end{tikzcd} \]
Pushing forward to $Y$ gives a long exact sequence
 \begin{equation}\label{longRpi}
     \begin{tikzcd}
        0 \arrow[r] & \pi_*\omega_X \arrow[r] & \pi_*\omega_X(D_i) \arrow[r] & \pi_*\omega_{D_i} \arrow[r, "\theta_i"] & R^1\pi_*\omega_X \arrow[r] & R^1\pi_*\omega_X(D_i)\rightarrow\cdots
    \end{tikzcd}
\end{equation}
 We also set $\cR_i \coloneqq \ker(\theta_i) \inc \pi_*\omega_{D_i}$. We will need the following generalization of Grauert--Riemenschneider vanishing:

%\begin{thm}[{\cite[Théorème 3.1.(A)]{Esnault_Viewheg_Revetements_cycliques_II}}]\label{generalization_GR_van}
%    Let $f \colon X \to Y$ be a surjective morphism of projective varieties with $X$ smooth, and let $g \colon Y \to Z$ be a generically finite morphism. Then for all $j > 0$ and $i \geq 0$, \[ R^jg_*(R^if_*\omega_X) = 0. \]
%\end{thm}

\begin{lemma}\label{splitting_time}
    Let $i \in I$. The following statements hold:
    \begin{enumerate}
    \item\label{Kollar_torsion_freeness} there exists an isomorphism $R^1\pi_*\omega_X \cong \omega_Y$;
    \item\label{split} the morphism $\theta_i$ from \autoref{longRpi} is split surjective;
    \item\label{vanishR1} for all $j > 0$, we have the vanishings $R^j\pi_*\omega_X(D_i) = 0$ and $R^ja_*\omega_X(D_i) = 0$.
    \end{enumerate}
\end{lemma}
\begin{proof}
    The existence of an isomorphism $R^1\pi_*\omega_X \cong \omega_Y$ follows from \autoref{hodge_module_lemma} (one could also use Koll\'ar's torsion--freeness theorem, see \cite{Kollar_Higher_direct_images_of_dualizing_sheaves_I}).  

    Let us now prove \autoref{split}. First, note that $R^1\pi_*\omega_X(D_i)$ is torsion (this group vanishes at the generic point of $Y$ by Kodaira vanishing for curves), so $\theta_i$ is non-zero. By \autoref{Kollar_torsion_freeness}, it is enough to show that any non-zero morphism $\pi_*\omega_{D_i} \to \omega_Y$ splits. Since
    \[ \Hom(\pi_*\omega_{D_i}, \omega_Y) \cong \Hom(R\pi_*\omega_{D_i}, \omega_Y) \cong \Hom(\omega_{D_i}, \pi^!\omega_Y) \cong \Hom(\omega_{D_i}, \omega_{D_i}) \cong \bC, \]
    any two non-zero morphisms are equal up to multiplication by a non-zero constant. Since the usual trace map $\pi_*\omega_{D_i} \to \omega_Y$ splits (a splitting is given by pulling back top differential forms and dividing by the degree), we deduce that $\theta_i$ also splits.

    Let us finally prove \autoref{vanishR1}. Since $R^j\pi_*\omega_X = 0$ for all $j > 1$ and $R^j\pi_*\omega_{D_i} = 0$ for all $j > 0$ by \autoref{hodge_module_lemma}, the surjectivity of $\theta_i$ gives the vanishing $R^j\pi_*\omega_X(D_i) = 0$ for all $j > 0$. It then follows from the Leray spectral sequence that for all $j > 0$, we have $R^ja_*\omega_X(D_i) = R^jb_*(\pi_*\omega_X(D_i))$. Since we have an exact sequence 
    \[ \begin{tikzcd}
        0 \arrow[rr] &  & \pi_*\omega_X \arrow[rr] &  & \pi_*\omega_X(D_i) \arrow[rr] &  & \cR_i \arrow[rr] &  & 0
    \end{tikzcd} \] and $\cR_i$ is a direct summand of $\pi_*\omega_{D_i}$, we deduce again by \autoref{hodge_module_lemma} that \[  R^jb_*(\cR_i) \inc R^jb_*(\pi_*\omega_{D_i}) \cong R^ja_*\omega_{D_i} = 0 \] for all $j > 0$, so \autoref{vanishR1} is proven.
\end{proof}
\begin{remark}
    Although we showed by hand that any non--zero morphism $\pi_*\omega_{D_i} \to \omega_Y$ splits, we could have invoked the general result \cite[Theorem 4.5]{Hacon_Popa_Schnell_Algebraic_fiber_spaces_over_abelian_varieties:_around_a_recent_theorem___by_Cao_and_Paun}.
\end{remark}
    
The proof of \autoref{thm:fiberspace} will consist in analyzing separately the case in which $D$ has one or at least two components. In both cases, we will need the following result:

\begin{thm}[{\cite[Lemma 4.2 and Theorem 3.2]{Pareschi_Basic_results_on_irr_vars_via_FM_methods}}]\label{Pareschi_result}
    Let $Y$ be a smooth projective variety of maximal Albanese morphism, and let us denote by $b \colon Y \to \Alb(Y)$ its Albanese morphism. If there exists no positive-dimensional closed subset $Z \inc \Pic^0(\Alb(Y))$ such that $Z$ and $-Z$ are contained in $V^0(\Alb(Y), b_*\omega_Y)$, then $b$ is birational. 
\end{thm}

Let us start with the case of two components, because the argument is simpler.

\begin{lemma}\label{2_comps_ok}
    If $D$ has at least two irreducible components, then $a$ is an algebraic fiber space.
\end{lemma}

\begin{proof}
    Let $D_1$ and $D_2$ be two distinct irreducible components of $D$. Pushing forward to $A(X)$ the adjoint sequence
    \[\begin{tikzcd} 
        0 \arrow[rr] &  & \omega_X(D_1) \arrow[rr] &  & \omega_X(D) \arrow[rr] &  & \omega_{D_2}(D_1|_{D_2}) \arrow[rr] &  & 0
    \end{tikzcd} \] and using \autoref{splitting_time}.\autoref{vanishR1} gives the the short exact sequence
    \[ \begin{tikzcd}
         0 \arrow[rr] &  & a_*\omega_X(D_1) \arrow[rr] &  & a_*\omega_X(D) \arrow[rr] &  & a_*\omega_{D_2}(D_1|_{D_2}) \arrow[rr] &  & 0.
    \end{tikzcd} \]
    Given that all the sheaves in this exact sequence are GV--sheaves, we obtain the inclusion $\Supp(\FM_{A(X)}(a_*\omega_{D_2}(D_1|_{D_2}))) \inc \Supp(\FM_{A(X)}(a_*\omega_X(D)))$, or equivalently \[V^0(A(X), a_*\omega_{D_2}(D_1|_{D_2})) \inc V^0(A(X), a_*\omega_X(D)).\] Since $a_*\omega_{D_2}$ contains $b_*\omega_Y$ as a direct summand by points \autoref{Kollar_torsion_freeness} and \autoref{split} of \autoref{splitting_time}, we deduce that \[ V^0(A(X), b_*\omega_Y) \inc V^0(A(X), a_*\omega_{D_2}) \inc V^0(A(X), a_*\omega_{D_2}(D_1|_{D_2})) \inc V^0(A(X), a_*\omega_X(D)). \] Recall that by \autoref{lem:GVmorphism}.\autoref{lem:noZ}, there cannot exist a positive-dimensional closed subset $Z \inc \Pic^0(A(X))$ such that both $Z$ and $-Z$ are included in $V^0(A(X), b_*\omega_Y)$. We then conclude by \autoref{Pareschi_result} that $b \colon Y \to A(X)$ is birational.
\end{proof}

We are left to deal with the case where $D$ is irreducible.

\begin{lemma}\label{irred_ok}
    If $D$ is irreducible, then $a$ is an algebraic fiber space.
\end{lemma}

\begin{proof}
    Let us start by constructing a non-zero morphism $\phi \colon \pi_*\omega_X(D) \to \omega_Y$. By \autoref{lem:GVmorphism}.\autoref{lem:GVmorphismitem}, we know that $a_*\omega_X(D)$ admits a non--zero morphism to $\cO_A = \omega_A$. Since $a_*\omega_X(D) = Ra_*\omega_X(D) = Rb_*R\pi_*\omega_X(D)$ by \autoref{splitting_time}.\autoref{vanishR1}, the non--zero morphism $Rb_*R\pi_*\omega_X(D) \to \omega_A$ corresponds by adjunction to a non--zero morphism $R\pi_*\omega_X(D) \to b^!\omega_A \cong \omega_Y$. We then obtain a non--zero morphism $\phi \colon \pi_*\omega_X(D) \to \omega_Y$ by \autoref{splitting_time}.\autoref{vanishR1}. Now, consider the diagram 
    \[ \begin{tikzcd}
        0 \arrow[rr] &  & \pi_*\omega_X \arrow[rr, "\iota"] &  & \pi_*\omega_X(D) \arrow[rr] \arrow[d, "\phi"] &  & \cR \arrow[rr] &  & 0 \\
        &  & &  & \omega_Y.  &  &  &  &  
    \end{tikzcd} \] Note that the composition $\phi \circ \iota \colon \pi_*\omega_X \to \omega_Y$ must be zero, as otherwise it would be a split surjection by \cite[Theorem 4.5]{Hacon_Popa_Schnell_Algebraic_fiber_spaces_over_abelian_varieties:_around_a_recent_theorem___by_Cao_and_Paun}, contradicting the assumption that $H^0(X, \omega_X) = 0$. Hence, we obtained a non--zero morphism $\psi \colon \cR \to \omega_Y$. Recall that there exists a retract $\lambda \colon \pi_*\omega_D \to \cR$ of the inclusion $\cR \inc \pi_*\omega_D$ by \autoref{splitting_time}.\autoref{split}, so $\psi \circ \lambda \colon \pi_*\omega_D \to \omega_Y$ is a non--zero morphism. The same argument as in the proof of \autoref{splitting_time}.\autoref{split} shows that it must automatically split. We fix such a splitting $\mu \colon \omega_Y \to \pi_*\omega_D$. Here is a diagram illustrating our situation 
\[\begin{tikzcd}[column sep=large, row sep=large]
    \cR \arrow[rr, hook] \arrow[rd, "\psi"'] & & \pi_*\omega_D \arrow[ll, "\lambda"', bend right] \arrow[ld, "\psi \circ \lambda", sloped, pos=0.5, yshift=2pt] \\
    & \omega_Y \arrow[ru, "\mu"', bend right=40] & 
\end{tikzcd}
\]
It then follows immediately that $\lambda \circ \mu$ is a splitting of $\psi$, so in particular $b_*\omega_Y \inc b_*\cR$. By \cite[Théorème 3.1.(A)]{Esnault_Viewheg_Revetements_cycliques_II}, we have that $R^1b_*(\pi_*\omega_X) = 0$, so we have an exact sequence \[ \begin{tikzcd}
        0 \arrow[rr] &  & a_*\omega_X \arrow[rr] &  & a_*\omega_X(D) \arrow[rr] &  & b_*\cR \arrow[rr] &  & 0.
    \end{tikzcd} \] Hence, we obtain as in the proof of \autoref{2_comps_ok} that \[ V^0(A(X), b_*\omega_Y) \inc V^0(A(X), b_*\cR)) \inc V^0(A(X), a_*\omega_X(D)), \] so the same argument as in the proof of \emph{loc. cit.} using \autoref{Pareschi_result} allows us to deduce that $b \colon Y \to A(X)$ is birational.
\end{proof}

The proof of \autoref{thm:fiberspace} is now complete.
\end{proof}

\begin{corollary}[{\autoref{main_thm}}]\label{cor:dominant_when_compact_part__not_too_small}
    Assume that $q(X) \geq \dim(V) - 1$. Then the Albanese morphism $V \to A(V)$ is an isomorphism away from a closed subset of $\Alb(V)$ of codimension at least $2$.
\end{corollary}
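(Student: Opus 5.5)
The plan is to reduce to \autoref{main_thm_max_alb_dim} by showing that $V$ has maximal Albanese dimension. We work with the hypotheses of \autoref{main_thm}: $\overline{q}(V)=\dim V = n$, $\overline{P}_1(V)=\overline{P}_2(V)=1$ and $q(X)\geq n-1$ for a good compactification $(X,D)$. Since $q(X)\leq \overline{q}(V)=n$, there are two cases.

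\emph{Case $q(X)=n$.} Then $r=0$ and $\Alb(V)=\Alb(X)$. By \autoref{lem:GVmorphism}.\autoref{lem:GVmorphismitem}, applied to $a\colon X\to A(X)$ (using $h^0(\omega_X(D))=h^0(\omega_X(D)^{\otimes 2})=1$ and injectivity of $a^*$ on $\Pic^0$), the morphism $a$ is surjective. As $\dim A(X)=n=\dim X$, the map $a$ is generically finite. Hence $\alb_V$ is dominant and $V$ has maximal Albanese dimension.

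\emph{Case $q(X)=n-1$.} Then $r=1$, so $Z\to A(X)$ is a $\bP^1$-bundle and $Z_1=Z$. We apply \autoref{thm:fiberspace} to $(X,D)$: its hypotheses are exactly $q(X)=n-1$ and $h^0(\omega_X(D))=h^0(\omega_X(D)^{\otimes 2})=1$. It gives $a_*\cO_X=\cO_{A(X)}$. Then \autoref{cor:best_dominance_we_can_show_so_far} shows that $f_1=g\colon X\to Z$ is surjective, after blowing up $X$ along the boundary to reach the setup \autoref{eq:setupdiagram}; this does not change $V$ or the invariants. Since $\dim Z = \overline{q}(V)=n=\dim X$, the map $\alb_V$ is dominant and generically finite, so $V$ has maximal Albanese dimension.

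In both cases, \autoref{main_thm_max_alb_dim} applies because $\overline{P}_2(V)=1$, and it yields that $\alb_V$ is an isomorphism away from a closed subset of codimension at least $2$. I expect no real obstacle here, since the heavy lifting is in \autoref{thm:fiberspace} and \autoref{main_thm_max_alb_dim}. The only points to check are that the hypotheses of the earlier results are met (the standing assumption $r\geq 1$ in \autoref{cor:best_dominance_we_can_show_so_far}, and the compatibility of blowing up with the invariants), and that $q(X)$ does not depend on the chosen compactification, being a birational invariant.
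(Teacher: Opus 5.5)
Your proposal is correct and follows the paper's proof: for $q(X)=\dim V$ you get surjectivity of $a$ from \autoref{lem:GVmorphism}.\autoref{lem:GVmorphismitem}, and for $q(X)=\dim V-1$ you combine \autoref{thm:fiberspace} with \autoref{cor:best_dominance_we_can_show_so_far} to obtain maximal Albanese dimension, then conclude by \autoref{main_thm_max_alb_dim}. Your extra checks are points the paper leaves implicit, namely that $r=1$ so $Z_1=Z$ and $f_1=g$, and that blowing up along the boundary does not affect the hypotheses.
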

\begin{proof}
    Let us show that in this case, $V$ has maximal Albanese dimension, so that the result will follow from \autoref{WWPB_mad}. If $q(X) = \dim(V)$, then this is immediate since $X \to A(X)$ is surjective by \autoref{lem:GVmorphism}.\autoref{lem:GVmorphismitem}. If $q(X) = \dim(V) - 1$, then this follows from \autoref{cor:best_dominance_we_can_show_so_far} and \autoref{thm:fiberspace}.
\end{proof}

\bibliographystyle{amsalpha} 
\bibliography{bibliography}
\end{document}